\newbox{\myorcidaffilbox}
\sbox{\myorcidaffilbox}{\large\includegraphics[height=1.7ex]{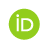}}
\newcommand{\orcid}[1]{\href{https://orcid.org/#1}{\usebox{\myorcidaffilbox}}}
\newtheorem{rem}{Remark}
\newtheorem{lemma}{Lemma}
\newtheorem{proposition}{Proposition}
\newtheorem{theorem}{Theorem}
\DeclareRobustCommand\onedot{\futurelet\@let@token\@onedot}
\def\@onedot{\ifx\@let@token.\else.\null\fi\xspace}
\def\eg{{e.g}\onedot} 
\def\ie{{i.e}\onedot} 
\def\cf{{cf}\onedot}
\def\etal{{et al}\onedot}
\newcommand{\R}{\mathbb{R}}
\newcommand{\N}{\mathbb{N}}
\newcommand{\tr}{\mathrm{tr}\,}
\newcommand{\Id}{\mathrm{Id}}
\DeclareMathOperator*{\argmax}{arg\,max}
\DeclareMathOperator*{\argmin}{arg\,min}
\newcommand{\state}{{y}}
\newcommand{\y}{\state}
\newcommand{\vertices}{\mathcal{V}}
\newcommand{\edges}{\mathcal{E}}
\newcommand{\faces}{\mathcal{T}}
\newcommand{\edge}{e}
\newcommand{\vertex}{v}
\newcommand{\face}{t}
\newcommand{\x}{x}
\newcommand{\xref}{\hat x}
\newcommand{\numV}{{\lvert \vertices \rvert}}
\newcommand{\numF}{{\lvert\faces\rvert}}
\newcommand{\area}{{a}}
\newcommand{\mass}{M}
\newcommand{\len}{{l}}
\newcommand{\dih}{{\theta}}
\newcommand{\mat}{u}
\newcommand{\W}{\mathcal{W}}
\newcommand{\mem}{{\mbox{{\tiny mem}}}}
\newcommand{\bend}{{\mbox{{\tiny bend}}}}
\newcommand{\freeE}{\mathcal{I}}
\newcommand{\force}{{f}}
\newcommand{\dfFF}{G}
\newcommand{\dDisTen}{\mathcal{G}}
\newcommand{\lin}{{\mathrm{lin}}}
\renewcommand{\H}{H}
\def\ymin{\upsilon_{{\mbox{\tiny min}}}}
\def\ymax{\upsilon_{{\mbox{\tiny max}}}}
\begin{document}
  
\title{A Pessimistic Bilevel Stochastic Problem for Elastic Shape Optimization}

\author[1]{Johanna Burtscheidt\orcid{0000-0001-6745-0784}}
\author[1]{Matthias Claus\orcid{0000-0002-2617-0855}}
\author[2]{Sergio Conti\orcid{0000-0001-7987-9174}}
\author[3]{Martin Rumpf}
\author[3]{\\Josua Sassen\orcid{0000-0002-8069-4713}}
\author[1]{Rüdiger Schultz\orcid{0000-0003-2486-7111}}

\affil[1]{Faculty of Mathematics, University of Duisburg-Essen}
\affil[2]{Institute for Applied Mathematics, University of Bonn}
\affil[3]{Institute for Numerical Simulation, University of Bonn}

\date{}

\maketitle

\begin{abstract}
We consider pessimistic bilevel stochastic programs in which the follower maximizes over a fixed compact convex set a strictly convex quadratic function, whose Hessian depends on the leader's decision. This results in a random upper level outcome which is evaluated by a convex risk measure. Under assumptions including real analyticity of the lower-level goal function, we prove the existence of optimal solutions. We discuss an alternate model, where the leader hedges against optimal lower-level solutions, and show that solvability can be guaranteed under weaker conditions in both, a deterministic and a stochastic setting.

The approach is applied to a mechanical shape optimization problem in which the leader decides on an optimal material distribution to minimize a tracking-type cost functional, whereas the follower chooses forces from an admissible set to maximize a compliance objective.  
The material distribution is considered to be stochastically perturbed in the actual construction phase.
Computational results illustrate the bilevel optimization concept and demonstrate the interplay of follower and leader in shape design and testing.
\end{abstract}

\section{Introduction}

Bilevel programs arise from the interplay of two decision makers on different levels of a hierarchy: The leader decides first and passes the upper-level decision to the follower. Incorporating the leader's decision as a parameter, the follower then returns an optimal solution of the lower-level problem. The leader's outcome depends on both, their decision and the solution that is picked by the follower. While the first formulation of a bilevel problem dates back to a monograph on duopoly market models published in 1934 (cf. \cite{St34}), these problems have not received extensive attention until the 1970s (for more details, we refer to \cite{De02}).

In this paper, we study a class of pessimistic bilevel stochastic programs, where the lower level problem has a strictly convex quadratic objective function and a fixed feasible set. 
As an application, we study a mechanical shape optimization problem in which the leader (the designer) minimizes a tracking functional over the set of feasible material distributions, whereas the follower (the test engineer) chooses forces from an admissible set to maximize a compliance objective. 
The safety of the construction is then evaluated pessimistically with the choice of the worst possible response.
Randomness comes into play via manufacturing errors that stochastically perturb the material parameters in the actual construction phase.

In what follows, let us briefly review related work.
\emph{Bilevel programs} are nonconvex, nondifferentiable and NP-hard (non-deterministic polynomial-time-hard) \cite{Ba91}. Moreover, conceptual difficulties arise if the lower level problem has more than a single optimal solution. In this setting, one typically considers the so-called optimistic formulation, where cooperation of the follower is assumed, or takes a pessimistic stance and hedges against the worst possible outcome \cite{Le78}. It is well-known that pessimistic bilevel programs have weaker analytical properties than their optimistic counterparts.
In general, the existence of optimal solutions to a pessimistic bilevel program can only be assured under restrictive conditions including weak analyticity for the lower level objective function and strong assumptions on the structure of the lower level feasible set (cf. \cite[Theorem 4.1]{LuMiPi87}).
These difficulties can be overcome by considering a modified setting, where the leader hedges against solutions that are almost optimal for the lower level problem. 
Sufficient conditions for convergence of the modified optimal values to the original one have been established in \cite{LoMo96}. A systematic analysis of more inner regularization techniques has been recently provided by Lignola and Morgan in \cite{LiMo17,LiMo19}.

A \emph{bilevel stochastic program} arises if the problem depends on an additional random parameter, that only the follower can observe before making their decision. 
In contrast, the leader has to decide nonanticipatorily, but is aware of the underlying probability distribution. In this setting, the upper level objective function can be understood as a random variable, which allows the leader to base their decision on some statistical functional. 
The expected value is for instance considered in the very first paper on bilevel stochastic optimization \cite{PaWy99}. 
In a linear setting, more general models incorporating a variety of convex risk measures have been recently studied in \cite{BuClDe20}.
The control of a vibrating string with stochastic data has been investigated  in \cite{FaGuHeHe20} for the case of the excess-probability as a goal function.  
A level-set based approach for solving risk-averse structural topology optimization problems with random field loading and material uncertainty is given in \cite{MaHeKePe18}.

Already in 2001, Christiansen \etal \cite{ChPaWy01} studied a \emph{stochastic bilevel programming perspective in shape optimization}.  
They assume that the lower level deals with the deformation of the structure for a given shape and given forces subject to different constraints, while on the upper level the shape is decided based on an optimization of weight or a global stiffness measure. Assuming that the lower level is uniquely solvable, the authors provide sufficient conditions for the existence of optimal solutions and discuss algorithmic aspects.
Herskovits \etal \cite{HeLeDiSa00} reformulated an elastic shape optimization problem with constraints as a bilevel optimization problem.
They investigate a contact problem with non-penetration constraints on the lower level and stress constraints on the upper level. In \cite{Zu15}, Zuo investigated shape optimization of thin shells in car design as an optimistic bilevel optimization problem, where on the lower level the mass distribution along the body frame of the vehicle and on the upper level the shape of shell segments of the hull of the vehicle are optimized.
Sinha \etal \cite{SiMaDe18} recently presented a general overview on bilevel optimization also covering optimal design problems. 
In this context, they considered weight or cost optimization of a structure on the upper level and, on the lower level, the computation of displacements and stresses via minimization of the governing physical variational problem. 
To the best of our knowledge, pessimistic hierarchical optimization in shape optimization with an objective functional differing from the physical energy of the system on the upper two levels has not been investigated so far.

The approach  presented here is based on our previous work in \cite{CoHePaRuSc09,CoHePaRuSc11,CoRuScTo18}, which grew out of the aspiration to mobilize methodology from mainly economy-driven decision making under (stochastic) uncertainty in order to study PDE-constrained optimization with an emphasis on engineering-related topics such as shape optimization. The risk-neutral models and  models with risk aversion in the objective or the  constraints were treated with the classical expectation, with risk measures, or by invoking comparisons using  stochastic  dominance relations. In the spirit of this experience, the present paper is heading for models with the above bilevel features coming to the fore in the presence of uncertainty.

\medskip

The present work is organized as follows: In Section 2, we introduce a bilevel programming formulation, and in Section 3, the extension to a bilevel problem under stochastic uncertainty, which will be placed in the context of elastic shape optimization later in the paper. Based on this, we analyze both problem formulations and investigate their solvability. The application to a mechanical shape optimization problem via discrete shells is considered in Section 4 as well as its numerical optimization and the results of our numerical analysis. Finally, in Section 5, we draw conclusions and discuss possible future extensions of our work.

\section{Bilevel Problem Formulation}
Before formally introducing the bilevel problem, we briefly present the key objects. At the lowest level, $y[u, f]$ is the elastic displacement of the discrete shell which depends nonlinearly on the material parameters $u$ and linearly on the applied forces $f$.
The lower-level optimal solution set $\Psi[u]$, depending on the material parameter $u$, is the set of values of $f$ which maximize a quadratic functional in $y$.
In the upper-level of our pessimistic bilevel problem, we finally minimize the worst-case cost $J$ of the lower-level optimization with respect to the material parameters $u$.

In detail, this pessimistic bilevel problem reads as
\begin{equation}
    \label{PessimisticProblem}
    \underset{u \in \mathcal{U}}{\min} \left\{ \max_{f \in \Psi[u]} J[u,f] \right\},
\end{equation}
where $\mathcal{U} \subseteq (0,\infty)^n$ is a nonempty closed set, and $J \colon \mathcal{U} \times \R^N \to \R$ denotes the cost functional of the leader, which we assume to be continuous. In our application, this will be a tracking-type objective for a discrete shell with thickness/stiffness parameters $u$ in an admissible set $\mathcal{U}$ and applied forces $f$. Moreover, we let the lower level optimal solution set mapping $\Psi\colon \mathcal{U} \rightrightarrows \R^N$ be given by
\begin{equation}
    \label{eqdefPsiu}
    \Psi[u] \coloneqq \underset{f \in \mathcal{F}}{\argmax} \left\{ y[u, f]^\top H[u] y[u, f] \right\}
\end{equation}
with a nonempty, low-dimensional, convex and compact set of admissible forces $\mathcal{F} \subset \R^N$, a function $H\colon \R^n \to \R^{N \times N}$ such that the the restriction $H|_{\mathcal{U}}$ is continuous and takes values in the cone of symmetric positive definite matrices $\mathcal{S}^N_{++}$. 
Throughout the paper, the notation $g\colon X \rightrightarrows Y$ is used for a multifunction $g$ that maps the elements of some set $X$ to subsets of some set $Y$. The displacement $y$ depends on a vector $\mat$ of thickness/stiffness parameters and the forces $f$. 
In fact, the mapping $y\colon \mathcal{U} \times \R^N \to \R^N$ in \eqref{eqdefPsiu} is defined by the condition
\begin{equation}
    \label{LowestLevelProblem}
    \{y[u, f]\} = \argmin_{y \in \R^N} \left\{ \frac{1}{2} y^\top H[u] y - y^\top M f \right\}
\end{equation}
for some fixed matrix $M \in \mathcal{S}^N_{++}$, where uniqueness follows from $H[u] \in \mathcal{S}^N_{++}$. 
In our application, we consider a discrete shell with $n$ triangular facets subject to a force distribution $f$ in a set of admissible forces in $\R^N$, with $N$ being three times the number of vertices. 
For this case, the elastic displacement $y[u, f]$ is given as the minimizer of the total free energy of a linearized elasticity model with $H[\mat]$ denoting the Hessian of an originally nonlinear elastic energy and $M$ the mass matrix for the discrete reference shell.

\medskip

The above hierarchical problem \eqref{PessimisticProblem}-\eqref{LowestLevelProblem} can also be understood as a three-level program. However, as $H[u] \in \R^{N \times N}$ is symmetric and positive definite for any admissible material parameter $u \in \mathcal{U}$, the third-level problem in \eqref{LowestLevelProblem} is uniquely solvable. Invoking first-order optimality conditions, we obtain the explicit representation
\begin{equation}
    \label{LowestLevelSolution}
    y[u,f] =  H[u]^{-1} M f.
\end{equation}
Plugging this solution into the lower level problem yields a bilevel problem. Moreover, \eqref{LowestLevelSolution} leads to a simple expression for the lower level optimal value function $\psi\colon \mathcal{U} \to \mathbb{R}$,
\begin{equation}
    \label{lowerlevelproblem}
    \psi[u] \coloneqq \underset{f \in \mathcal{F}}{\max} \left\{ f^\top M H[u]^{-1} M f \right\},
\end{equation}
and to the reformulation of the definition of $\Psi$ in \eqref{eqdefPsiu} as
\begin{equation} 
    \label{eqPsifrompsi}
    \Psi[u] = \left\{f \in \mathcal{F} \mid f^\top M H[u]^{-1} M f = \psi[u] \right\}.
\end{equation}

\begin{lemma} \label{LemmaLowerLevelOptimalValueFunction}
    The lower level optimal value function $\psi$ defined by \eqref{lowerlevelproblem} is well-defined and continuous. In addition, the multifunction $\Psi$ is closed.
\end{lemma}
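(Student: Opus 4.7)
The plan is a direct application of Berge's maximum theorem. Set $\phi(u,f) \coloneqq f^\top M H[u]^{-1} M f$ and observe that this objective is jointly continuous on $\mathcal{U} \times \R^N$: by assumption $u \mapsto H[u]$ is continuous on $\mathcal{U}$ with values in the open cone $\mathcal{S}^N_{++}$, matrix inversion is continuous on the set of invertible matrices, so $u \mapsto H[u]^{-1}$ is continuous on $\mathcal{U}$, and composing with the continuous quadratic form in $f$ yields joint continuity. Since $\mathcal{F}$ is nonempty and compact, the Weierstrass theorem immediately shows that $\psi[u] = \max_{f \in \mathcal{F}} \phi(u,f)$ is well-defined and that $\Psi[u]$ is nonempty for every $u \in \mathcal{U}$.

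For continuity of $\psi$, I appeal to Berge's maximum theorem applied to $\phi$ together with the constant feasible-set correspondence $u \mapsto \mathcal{F}$, which is trivially both upper and lower hemicontinuous with nonempty compact values. The theorem delivers continuity of $\psi$ on $\mathcal{U}$ and, as a bonus, upper hemicontinuity of $\Psi$ with nonempty compact values.

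For the closedness of the multifunction $\Psi$, I interpret this as closedness of its graph and use the characterization \eqref{eqPsifrompsi} to write
\begin{equation*}
    \mathrm{gph}(\Psi) = \{(u,f) \in \mathcal{U} \times \mathcal{F} : \phi(u,f) - \psi[u] = 0\}.
\end{equation*}
Since $\phi$ and $\psi$ are continuous and both $\mathcal{U}$ and $\mathcal{F}$ are closed, this level set is closed in $\R^n \times \R^N$, so $\Psi$ has closed graph. Alternatively, closedness of the graph can be extracted from upper hemicontinuity plus compactness of $\mathcal{F}$, but the explicit graph representation is more transparent.

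There is no real obstacle here; the only slightly non-routine ingredient is the continuity of $u \mapsto H[u]^{-1}$, which rests on the hypothesis that $H$ takes values in the positive definite cone (an open subset of symmetric matrices) so that inversion remains continuous along the trajectory. Everything else reduces to standard facts about parametric optimization once Berge's theorem has been invoked.
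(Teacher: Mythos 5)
Your proof is correct, and it reaches the same conclusions by a slightly different route. For the continuity of $\psi$ you invoke Berge's maximum theorem with the constant feasible correspondence $u \mapsto \mathcal{F}$, whereas the paper proves the two semicontinuities by hand: lower semicontinuity because $\psi$ is a pointwise supremum of the continuous functions $u \mapsto f^\top M H[u]^{-1} M f$, and upper semicontinuity by a contradiction argument that extracts a convergent subsequence of maximizers from the compact set $\mathcal{F}$. The paper's argument is essentially a self-contained re-proof of the relevant half of Berge's theorem in this special (constant-correspondence) case, so the mathematical content is the same; your version is shorter and additionally yields upper hemicontinuity of $\Psi$ with compact values as a by-product, while the paper's version keeps the lemma elementary and avoids an external citation. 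The final step is identical in both: the graph of $\Psi$ is the intersection of the closed set $\mathcal{U} \times \mathcal{F}$ with the zero level set of the continuous function $(u,f) \mapsto \psi[u] - f^\top M H[u]^{-1} M f$, hence closed. Your remarks on the continuity of $u \mapsto H[u]^{-1}$ (via continuity of matrix inversion on the invertible matrices, using that $H|_{\mathcal{U}}$ takes values in $\mathcal{S}^N_{++}$) correctly identify the only nontrivial ingredient for joint continuity of the objective.
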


\begin{proof}
    For fixed $u$, the argument in \eqref{lowerlevelproblem} is quadratic in $f$, and in particular continuous. Since $\mathcal{F}$ is nonempty and compact, the maximum exists.
    
    For any $f\in\mathcal{F}$, the argument in \eqref{lowerlevelproblem} is a continuous
    function of $u$.  Moreover, $\psi$ is the pointwise supremum of a familiy of continuous functions and thus lower semicontinuous by \cite[Proposition~1.26(a)]{RoWe09}.
    
    We assume that $\psi$ is not upper semicontinuous, which yields $u_*\in\mathcal{U}$ and a sequence $u_j\to u_*$ such that $\lim_{j\to\infty} \psi[u_j] > \psi[u_*]$.
    For any $j$, we choose $f_j\in\mathcal{F}$ such that $\psi[u_j] = f_j^\top M H[u_j]^{-1} M f_j$. Passing to a subsequence, we can assume that $f_j$ converges to some $f_*\in\mathcal{F}$, since $\mathcal{F}$ is compact. By continuity of $f^\top M H[u]^{-1} M f$ we obtain
    \begin{equation*}
        \psi[u_*]\ge  f_*^\top M H[u_*]^{-1} M f_*=\lim_{j\to\infty}  f_j^\top M H[u_j]^{-1} M f_j=\lim_{j\to\infty} \psi[u_j],
    \end{equation*}
    a contradiction. Hence $\psi$ is continuous.
    
    \medskip
    
    Then the graph of the solution set mapping
    \begin{equation*}
        \mathrm{gph}\,\Psi = \left\{ [u,f] \in \mathcal{U} \times \mathcal{F} \mid \psi[u] -  f^\top M H[u]^{-1} M f = 0 \right\}
    \end{equation*}
    is the intersection of the closed set $\mathcal{U} \times \mathcal{F}$ and the level set of a continuous function and thus closed.
\end{proof}

\begin{proposition} \label{PropDetUSC}
    The mapping $\Phi\colon \mathcal{U} \to \R$ defined by
    \begin{equation*}
        \Phi[u] \coloneqq \max_{f \in \Psi[u]} J[u,f]
    \end{equation*}
    is well-defined and upper semicontinuous. Moreover, $\Phi$ is continuous at any $u \in \mathcal{U}$ for which $\Psi[u]$ is a singleton.
\end{proposition}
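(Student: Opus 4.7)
The plan is to verify well-definedness directly, then to obtain upper semicontinuity of $\Phi$ via a Berge-type sequential argument that exploits the closed-graph property of $\Psi$ established in Lemma~\ref{LemmaLowerLevelOptimalValueFunction} together with the compactness of $\mathcal{F}$. Continuity in the singleton case will then follow essentially automatically from the same extraction argument.

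For well-definedness, I would fix $u \in \mathcal{U}$ and observe that $\Psi[u]$ is nonempty since $f \mapsto f^\top M H[u]^{-1} M f$ is continuous on the nonempty compact set $\mathcal{F}$. Moreover, $\Psi[u]$ is the section at $u$ of the closed graph of $\Psi$, hence closed, and contained in the compact set $\mathcal{F}$, hence compact. Combined with the continuity of $J$, this guarantees that $\max_{f \in \Psi[u]} J[u,f]$ is attained, so $\Phi$ is well-defined.

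For upper semicontinuity, I would proceed sequentially. Given $u_j \to u_*$ in $\mathcal{U}$, first extract a subsequence (not relabeled) along which $\Phi[u_j] \to \limsup_{k\to\infty} \Phi[u_k]$. For each $j$, pick a maximizer $f_j \in \Psi[u_j]$ with $\Phi[u_j] = J[u_j, f_j]$. Since $f_j \in \mathcal{F}$ and $\mathcal{F}$ is compact, a further subsequence satisfies $f_j \to f_* \in \mathcal{F}$. The closed-graph property of $\Psi$ from Lemma~\ref{LemmaLowerLevelOptimalValueFunction} yields $f_* \in \Psi[u_*]$, and continuity of $J$ gives $J[u_j, f_j] \to J[u_*, f_*]$. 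Consequently,
\begin{equation*}
    \limsup_{j \to \infty} \Phi[u_j] = J[u_*, f_*] \leq \max_{f \in \Psi[u_*]} J[u_*, f] = \Phi[u_*],
\end{equation*}
which is the desired upper semicontinuity.

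For continuity at a point $u_*$ with $\Psi[u_*] = \{f_*\}$, I would show that for any sequence $u_j \to u_*$ and any choice of maximizers $f_j \in \Psi[u_j]$, the whole sequence $f_j$ converges to $f_*$. Indeed, the extraction argument above implies that every cluster point of $\{f_j\}$ lies in $\Psi[u_*] = \{f_*\}$, and as $\{f_j\}$ lies in the compactum $\mathcal{F}$ this forces $f_j \to f_*$. Continuity of $J$ then delivers $\Phi[u_j] = J[u_j, f_j] \to J[u_*, f_*] = \Phi[u_*]$, which together with upper semicontinuity yields continuity at $u_*$. The main technical obstacle is merely the bookkeeping of nested subsequences in the upper semicontinuity step; once Lemma~\ref{LemmaLowerLevelOptimalValueFunction} is in hand, no further structural input is required.
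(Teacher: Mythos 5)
Your proposal is correct and follows essentially the same route as the paper: well-definedness via compactness of $\Psi[u]\subseteq\mathcal{F}$ and continuity of $J$, upper semicontinuity via selecting maximizers $f_k\in\Psi[u_k]$, extracting a convergent subsequence, and invoking the closedness of $\mathrm{gph}\,\Psi$ from Lemma~\ref{LemmaLowerLevelOptimalValueFunction}, and the singleton case by identifying the unique cluster point. Your explicit first extraction of a subsequence realizing the $\limsup$ is a slightly more careful rendering of the paper's ``without loss of generality'' step, but the argument is the same.
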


\begin{proof}
    For any fixed $u\in\mathcal{U}$, by \eqref{eqPsifrompsi} and Lemma \ref{LemmaLowerLevelOptimalValueFunction} the lower-level set solution mapping $\Psi[u]$ is a nonempty, closed subset of the compact set $\mathcal{F}$, and hence compact.
    Thus, $\Phi$ is well-defined by continuity of the upper-level cost functional $J$ on $\mathcal{U} \times \mathbb{R}^N$. Consider any sequence $\lbrace u_k \rbrace_{k \in \mathbb{N}} \subseteq \mathcal{U}$ that converges to $u \in \mathcal{U}$. By the previous considerations, there exists a sequence $\lbrace f_k \rbrace_{k \in \mathbb{N}}$ such that $[u_k,f_k] \in \mathrm{gph}\,\Psi$ and
    \begin{equation*}
        \Phi[u_k] = J[u_k,f_k]
    \end{equation*}
    holds for any $k \in \mathbb{N}$. As $\mathcal{F}$ is compact, we may assume without loss of generality that $\lbrace f_k \rbrace_{k \in \mathbb{N}}$ converges to some $f \in \mathcal{F}$. By Lemma \ref{LemmaLowerLevelOptimalValueFunction}, we have $[u,f] \in \mathrm{gph}\,\Psi$ and thus
    \begin{equation*}
        \limsup_{k \to \infty} \Phi[u_k] = \limsup_{k \to \infty} J[u_k,f_k] = J[u,f] \leq \max_{\bar{f} \in \Psi[u]} J[u,\bar{f}] = \Phi[u].
    \end{equation*}
    Hence, $\Phi$ is upper semicontinuous. If $\Psi[u]$ is a singleton, we also have
    \begin{equation*}
        \liminf_{k \to \infty} \Phi[u_k] = \liminf_{k \to \infty} J[u_k,f_k] = J[u,f] = \Phi[u],
    \end{equation*}
    which completes the proof.
\end{proof}

\begin{rem}\label{rem12}
    To understand the significance of Proposition \ref{PropDetUSC} it is useful to compute these quantities explicitly in a simple low-dimensional example. 
    Assume $n=1$, $N=2$, $\mathcal U=[\frac12,\frac32]$, $M=Id$, $H[u]=\begin{pmatrix} 1 & u-1 \\ u-1 & 1\end{pmatrix}^{-1}$, $\mathcal F=[-1,1]\times[0,1]$. Then one computes 
    $f^TMH[u]^{-1}Mf=f_1^2+f_2^2+2(u-1)f_1 f_2$. Maximizing this quantity as in \eqref{lowerlevelproblem} 
    we see that only the two points $\{\pm 1,1\}$ of $\mathcal F$ are relevant, and in particular
    $\psi[u]=2+2|u-1|$. Further, from  \eqref{eqdefPsiu} (or, equivalently,
    \eqref{eqPsifrompsi}) we obtain the set of extremal forces
    \begin{equation*}
        \Psi[u]=\begin{cases}\left\{                         
            \begin{pmatrix} 1 \\ 1 \end{pmatrix}\right\} & \text{ if } u>1,\\ \left\{                         
            \begin{pmatrix} -1 \\1 \end{pmatrix}\right\} & \text{ if } u<1,\\  \left\{                          \begin{pmatrix} 1 \\ 1 \end{pmatrix},\begin{pmatrix} -1 \\ 1 \end{pmatrix} \right\} & \text{ if } u=1.
        \end{cases}
    \end{equation*}
    Choosing for example $J[u,f]=uf_1$ one obtains
    \begin{equation*}
        \Phi[u]=\begin{cases}
            J[u,1]=u & \text{ if } u\ge 1,\\
            J[u,-1]=-u & \text{ if } u<1.
        \end{cases}
    \end{equation*}
    In particular, it is clear that on $\{u\ne 1\}$ the set-valued function $\Psi$ is a singleton and $\Phi$ is continuous, whereas $\Psi[1]$ contains two elements and $\Phi$ is not continuous, and not lower semicontinuous, at $u=1$.
\end{rem}

As this example shows, $\Phi$ arises as the objective function of a pessimistic bilevel program, where the lower level problem may have more than a single optimal solution and can thus not be expected to be lower semicontinuous in general, a fact that was already observed in  \cite[example on pages 30-31]{De02}. Note that this may prevent the bilevel program \eqref{PessimisticProblem} from having an optimal solution even if $\mathcal{U}$ is compact.

\medskip

To overcome the difficulties detailed above, we consider a model where the leader also hedges against $\eta$-optimal lower level solutions (cf. \cite{LiMo17}). Specifically, we replace $\Psi$ with the mapping $\Psi_\eta\colon \mathcal{U} \rightrightarrows \mathbb{R}^N$ defined by
\begin{equation*}
    \Psi_\eta[u] \coloneqq \left\{ f \in \mathcal{F} \mid \psi[u] - f^\top M H[u]^{-1} M f < \eta \right\}
\end{equation*}
for some positive constant $\eta$. This results in the modified upper level problem
\begin{equation}
    \label{ModifiedPessimisticProblem}
    \underset{u \in \mathcal{U}}{\min} \left\{ \sup_{f \in \Psi_\eta[u]} J[u,f] \right\}.
\end{equation}
As $\Psi[u] \subseteq \Psi_{\eta}[u]$ holds for any $\eta > 0$ and $u \in \mathcal{U}$, the optimal value in \eqref{ModifiedPessimisticProblem} yields an upper bound for the optimal value in \eqref{PessimisticProblem}.

\medskip

\begin{proposition} \label{PropPhiEta}
    The mapping $\Phi_\eta\colon \mathcal{U} \to \R$ defined by
    \begin{equation*}
        \Phi_\eta[u] \coloneqq \sup_{f \in \Psi_\eta[u]} J[u,f]
    \end{equation*}
    is well-defined and lower semicontinuous for any $\eta > 0$. In particular, \eqref{ModifiedPessimisticProblem} is solvable whenever $\mathcal{U}$ is nonempty and compact.
\end{proposition}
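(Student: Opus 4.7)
The plan is to verify well-definedness first, then lower semicontinuity by a direct $\varepsilon$-argument, and finally invoke the standard fact that a lower semicontinuous function on a nonempty compact set attains its infimum.

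For well-definedness, I would first note that since $\Psi[u] \subseteq \Psi_\eta[u]$ for every $\eta > 0$ and $\Psi[u]$ is nonempty by Lemma~\ref{LemmaLowerLevelOptimalValueFunction} (together with the discussion following \eqref{eqPsifrompsi}), the set $\Psi_\eta[u]$ is nonempty. Moreover $\Psi_\eta[u] \subseteq \mathcal{F}$, which is compact, and $J$ is continuous, so $J[u,\cdot]$ is bounded on $\Psi_\eta[u]$, making the supremum a finite real number.

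For lower semicontinuity, fix $u \in \mathcal{U}$ and a sequence $u_k \to u$ in $\mathcal{U}$, and let $\varepsilon > 0$. By definition of the supremum I would pick $f \in \Psi_\eta[u]$ with $J[u,f] > \Phi_\eta[u] - \varepsilon$; in particular $\psi[u] - f^\top M H[u]^{-1} M f < \eta$. The key observation is that the map
\begin{equation*}
    u \mapsto \psi[u] - f^\top M H[u]^{-1} M f
\end{equation*}
is continuous: $\psi$ is continuous by Lemma~\ref{LemmaLowerLevelOptimalValueFunction}, and since $H|_\mathcal{U}$ is continuous and takes values in $\mathcal{S}^N_{++}$, the inversion $u \mapsto H[u]^{-1}$ is continuous on $\mathcal{U}$. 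Hence for all sufficiently large $k$ the strict inequality $\psi[u_k] - f^\top M H[u_k]^{-1} M f < \eta$ persists, so $f \in \Psi_\eta[u_k]$ and therefore $\Phi_\eta[u_k] \ge J[u_k,f]$. Passing to the $\liminf$ and using continuity of $J$ gives
\begin{equation*}
    \liminf_{k \to \infty} \Phi_\eta[u_k] \ge \lim_{k \to \infty} J[u_k,f] = J[u,f] > \Phi_\eta[u] - \varepsilon,
\end{equation*}
and since $\varepsilon > 0$ was arbitrary, $\liminf_{k\to\infty} \Phi_\eta[u_k] \ge \Phi_\eta[u]$.

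For the final claim, if $\mathcal{U}$ is nonempty and compact, the lower semicontinuous function $\Phi_\eta$ attains its minimum on $\mathcal{U}$ by the Weierstrass theorem, so \eqref{ModifiedPessimisticProblem} is solvable. The only delicate point is exploiting the strict inequality in the definition of $\Psi_\eta[u]$: it is precisely this strictness that allows a membership condition at $u$ to be transported to $u_k$ for large $k$, whereas in the original Proposition~\ref{PropDetUSC} the corresponding equality condition only transports in the opposite direction and yields upper rather than lower semicontinuity. This trade-off, enlarging $\Psi$ to an open-type sublevel set to gain lower semicontinuity at the cost of an upper bound on the optimal value, is what makes the modified model \eqref{ModifiedPessimisticProblem} well-posed.
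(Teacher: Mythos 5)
Your proof is correct and follows essentially the same route as the paper's: both hinge on the observation that the strict inequality defining $\Psi_\eta[u]$ is preserved under small perturbations of $u$ (via continuity of $\psi$ and of $u \mapsto H[u]^{-1}$), so that a near-maximizer at $u$ remains feasible for $u_k$ with $k$ large. The only cosmetic difference is that the paper works with a maximizing sequence $f_j$ and passes to the limit in $j$, while you fix an $\varepsilon$-near-maximizer and let $\varepsilon \to 0$; your explicit remark on nonemptiness of $\Psi_\eta[u]$ is a small point the paper leaves implicit.
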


\begin{proof}
    First, note that $\Phi_\eta$ is well-defined and real-valued as, by continuity of $J$,
    for any $u \in \mathcal{U}$
    \begin{equation*}
        \Phi_\eta[u] \leq \max_{f \in \mathcal{F}} J[u,f] < \infty.
    \end{equation*}
    
    To prove semicontinuity, we consider a sequence $\{u_k\}_{k \in \mathbb{N}} \subseteq \mathcal{U}$ converging to some $u_*\in\mathcal{U}$. We select a sequence $\{f_j\}_{j \in \mathbb{N}} \subseteq \Psi_\eta[u_*]$, \ie $\psi(u_*) < \eta + f_j^\top H^{-1}[u_*]f_j$, such that $J[u_*,f_j]\to \Phi_\eta[u_*]$. By continuity of $\psi$ and $H$, there is 
    $K_j$ such that for all $k\ge K_j$ we have
    $\psi(u_k)<\eta+f_j^\top H^{-1}[u_k]f_j$, which is the same as
    $f_j\in \Psi_\eta[u_k]$. Therefore
    \begin{equation*}
        J[u_*,f_j]=\lim_{k\to\infty} J[u_k,f_j] \le \liminf_{k\to\infty} 
        \Phi_\eta[u_k].
    \end{equation*}
    Since $j$ was arbitrary, taking the limit $j\to\infty$ we 
    conclude
    \begin{equation*}
        \Phi_\eta[u_*]=\lim_{j\to\infty} J[u_*,f_j] \le \liminf_{k\to\infty} 
        \Phi_\eta[u_k].
    \end{equation*}
\end{proof}

\begin{rem}
    In \cite{LoMo96}, the alternate model 
    \begin{equation*}
        \underset{u \in \mathcal{U}}{\min} \left\{ \max_{f \in \bar{\Psi}_\eta[u]} J[u,f] \right\}
    \end{equation*}
    with
    \begin{equation*}
        \bar{\Psi}_\eta[u] \coloneqq \left\{ f \in \mathcal{F} \mid \psi[u] - f^\top M H[u]^{-1} M f \leq \eta \right\}
    \end{equation*}
    is considered. Under the present assumptions it can be shown that
    \begin{equation*}
        \lim_{\eta_\downarrow 0} \inf_{u \in \mathcal{U}} \left\{ \max_{f \in \bar{\Psi}_\eta[u]} J[u,f] \right\} 
        = \inf_{u \in \mathcal{U}} \left\{ \max_{f \in \bar{\Psi}[u]} J[u,f] \right\}.
    \end{equation*}
    However, the function
    \begin{equation*}
        \bar{\Phi}_\eta[u] \coloneqq \sup_{f \in \bar{\Psi}_\eta[u]} J[u,f]
    \end{equation*}
    is not lower semicontinuous in general, which is why we rather use formulation \eqref{ModifiedPessimisticProblem}.
\end{rem}

\section{Stochastic Model}
A bilevel stochastic program arises if a random vector enters the upper or lower levels as a parameter, with the information constraint that only the follower can observe the realization of the randomness before making their decision. In contrast, the leader has to decide nonanticipatorily, but is aware of the distribution of the randomness, which is independent of the leader's decision.

\medskip

In the following, we shall study a setting where the leader's decision $u$ is subject to a random perturbation. To become more specific, let $\Upsilon \colon \Omega \to \R^n$ be a random vector (i.e., a $\mathcal{B}$-Borel measurable function) on some probability space $(\Omega, \mathcal{B}, \mathbb{P})$. We obtain the following pattern of decision and observation:
\begin{equation*}
    \text{Leader decides}\ u
    \qquad\rightarrow\qquad
    \text{Realization of}\ \Upsilon
    \qquad\rightarrow\qquad
    \text{Follower decides}\ f.
\end{equation*}
In our model, the randomness results from manufacturing errors and has the following effect: Throughout \eqref{PessimisticProblem}-\eqref{LowestLevelProblem}, the leader's decision $u$ is replaced with the perturbed material vector $u \odot \upsilon$, where $\odot$ denotes the componentwise multiplication and $\upsilon$ is the realization of $\Upsilon$. In this setting, the leader seeks to ensure that the resulting material parameters are feasible regardless of the realization of the randomness. In order for the perturbed material vector to be almost-surely admissible, the leader has to choose the design parameter $u$ in the induced feasible set
\begin{equation*}
    \mathcal{U}_\Upsilon \coloneqq \left\{ u \mid u \odot \upsilon \in \mathcal{U} \quad\forall\,\upsilon \in \mathrm{supp}\,{\mu_{\Upsilon}} \right\},
\end{equation*}
where $\mu_{\Upsilon} \coloneqq \mathbb{P} \circ \Upsilon^{-1}$ is the induced Borel probability measure on $\R^n$. Note that the set $\mathcal{U}_\Upsilon$ is closed as the intersection of closed sets.
Typically, we think of a situation where $\mathrm{supp}\,\mu_{\Upsilon} \subseteq [a, b]^n$ holds for some $0 < a < b$, possibly both close to $1$.

\medskip

We will consider the stochastic extension of the classical pessimistic bilevel program \eqref{PessimisticProblem}-\eqref{LowestLevelProblem} as well as the modified version \eqref{ModifiedPessimisticProblem}. In both situations, we will assume the following assumption:
\begin{enumerate}[label=$\mathrm{(A\arabic*)}$,leftmargin=1cm, start=1]
    \item The support of $\mu_{\Upsilon}$ is bounded.\label{A1}
\end{enumerate}

In the classical setting, we will need the following additional assumptions:
\begin{enumerate}[label=$\mathrm{(A\arabic*)}$,leftmargin=1cm, start=2]
    \item $\mathcal{F}$ is a nonempty, bounded polyhedron, i.e. the convex hull of its nonempty and finite set of extreme points $\mathcal{P} \subseteq \mathcal{F}$.\label{A3}
    \item $\mu_{\Upsilon}$ is absolutely continuous with respect to the Lebesgue measure $\mathcal{L}^n$.\label{A2}
    \item There exists an open and connected set $\tilde{\mathcal{U}} \subseteq \mathbb{R}^n$, such that $\mathcal{U} \subseteq \tilde{\mathcal{U}}$, $H|_{\tilde{\mathcal{U}}}$ is real analytical,  and  it takes values in a closed subset of $\mathcal{S}^N_{++}$.\label{A4}
\end{enumerate}

From the leader's point of view, the material vector that will be passed down to the lower level after the stochastic perturbation has occurred can be understood as a random vector  $u \odot \Upsilon: \mathcal{U} \odot \Omega \to \R^n$ which is parameterized by the decision $u$. Similarly, the upper level outcome is a random variable $\Phi[u \odot \Upsilon] \in L^0(\Omega, \mathcal{B}, \mathbb{P})$ for any fixed $u$ by Proposition \ref{PropDetUSC}.  
Here and in the subsequent analysis, we denote the associated classical $L^p$-spaces with $p \in [1,\infty]$ by $L^p(\Omega, \mathcal{B}, \mathbb{P})$ and use $L^0(\Omega, \mathcal{B}, \mathbb{P})$ for the space of real-valued measurable functions.

\begin{theorem} \label{ThMainResult}
    Assume \ref{A1}-\ref{A4}, then the mapping $\mathbb{F}\colon \mathcal{U}_\Upsilon \to L^\infty(\Omega, \mathcal{B}, \mathbb{P})$ given by
    \begin{equation*}
        \mathbb{F}[u] \coloneqq \Phi\left[ u \odot \Upsilon \right]
    \end{equation*}
    is well-defined and continuous with respect to any $L^p$-norm with $p \in [1,\infty)$. 
\end{theorem}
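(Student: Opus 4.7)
The plan is to verify well-definedness by establishing measurability and essential boundedness, and then to obtain $L^p$-continuity via pointwise almost-sure convergence combined with a uniform bound, which through the bounded convergence theorem transfers to convergence in $L^p$ for all $p\in[1,\infty)$.

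For well-definedness, $\Phi$ is upper semicontinuous by Proposition \ref{PropDetUSC} and hence Borel measurable, so $\mathbb F[u]=\Phi\circ(u\odot\Upsilon)$ is measurable for every $u\in\mathcal U_\Upsilon$. Essential boundedness follows from \ref{A1}: the set $K_u:=\overline{\{u\odot\upsilon:\upsilon\in\mathrm{supp}\,\mu_\Upsilon\}}$ is compact and lies in $\mathcal U$, so upper semicontinuity bounds $\Phi$ from above on $K_u$, while continuity of $J$ on the compact $K_u\times\mathcal F$ supplies a lower bound.

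To prove continuity, fix a sequence $u_k\to u$ in $\mathcal U_\Upsilon$; uniform essential boundedness of $\{\mathbb F[u_k]\}_k$ follows since $\bigcup_k K_{u_k}$ is still bounded. What remains is pointwise a.s.\ convergence $\Phi[u_k\odot\Upsilon(\omega)]\to\Phi[u\odot\Upsilon(\omega)]$. By Proposition \ref{PropDetUSC} this reduces to showing that the discontinuity set of $\Phi$ in $\tilde{\mathcal U}$ has Lebesgue measure zero, since $u$ has strictly positive components and \ref{A2} makes the law of $u\odot\Upsilon$ absolutely continuous with respect to Lebesgue measure.

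The geometric heart of the argument uses \ref{A3} together with strict convexity of $f\mapsto f^\top MH[v]^{-1}Mf$ (positive definiteness of $MH[v]^{-1}M$): the maximum in \eqref{lowerlevelproblem} is attained only at extreme points of $\mathcal F$, so $\Psi[v]\subseteq\mathcal P$ for every $v\in\mathcal U$. Setting $q_p(v):=p^\top MH[v]^{-1}Mp$, each $q_p$ is real analytic on the open connected set $\tilde{\mathcal U}$ by \ref{A4}, and so is each difference $g_{p,q}=q_p-q_q$. Grouping vertices into equivalence classes under $p\sim q\iff g_{p,q}\equiv 0$ yields finitely many classes $\mathcal P_1,\dots,\mathcal P_m$ with common analytic value $\tilde q_i$. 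For $i\neq j$ the difference $\tilde q_i-\tilde q_j$ is a non-trivial analytic function on $\tilde{\mathcal U}$, so by the identity principle its zero set has Lebesgue measure zero; call $E$ the finite union of these tie sets. Outside $E$ a unique class $i^*(v)$ attains the maximum, and since $i^*$ is locally constant by continuity of the $\tilde q_i$, the function $\Phi$ coincides near any $v\in\tilde{\mathcal U}\setminus E$ with $\max_{f\in\mathcal P_{i^*(v)}}J[\cdot,f]$, a maximum of finitely many continuous functions, and is therefore continuous at $v$. The main obstacle to anticipate is precisely the potential existence of ``tied'' vertex pairs with $g_{p,q}\equiv 0$, which are not excluded by the hypotheses and which the equivalence class reduction is introduced to accommodate.
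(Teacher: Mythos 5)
Your proposal is correct and follows essentially the same route as the paper: measurability from upper semicontinuity, a uniform bound plus dominated convergence to reduce to pointwise a.e.\ convergence, and then the key null-set argument for the discontinuities of $\Phi$ via $\Psi[v]\subseteq\mathcal{P}$, real analyticity of $v\mapsto p^\top M H[v]^{-1}Mp$, and the equivalence-class reduction to handle identically tied extreme points (this is exactly the content of the paper's Lemma~\ref{LemmaDiscontinuities}). The only cosmetic difference is that you argue continuity off the tie set by local constancy of the maximizing class, where the paper instead reuses the sequential argument of Proposition~\ref{PropDetUSC} on the sets $S[\tilde f]$; both are valid.
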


The proof of Theorem \ref{ThMainResult} requires some preliminary work.

\begin{lemma} \label{LemmaDiscontinuities}
    Assume \ref{A3} and \ref{A4}, then the set of discontinuities of $\Phi$ is a Lebesgue null set.
\end{lemma}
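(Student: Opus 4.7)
The plan is to localise the discontinuities of $\Phi$ to a finite union of zero sets of real analytic functions that are not identically zero, and then invoke the classical fact that such zero sets are Lebesgue null. As a first step, I would exploit \ref{A3}: since $f \mapsto f^\top M H[u]^{-1} M f$ is a strictly convex quadratic (its Hessian $2MH[u]^{-1}M$ is positive definite for every admissible $u$), its maximum over the compact convex polyhedron $\mathcal{F}$ is attained only at extreme points, so $\Psi[u] \subseteq \mathcal{P}$. Proposition \ref{PropDetUSC} then confines the discontinuity set $D_\Phi$ to $\{u \in \mathcal{U} : |\Psi[u]| \geq 2\}$.

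For each $p \in \mathcal{P}$ I would introduce $q_p[u] := p^\top M H[u]^{-1} M p$. By \ref{A4}, $H$ is real analytic on the connected open set $\tilde{\mathcal{U}}$ with values in a closed subset of the open cone $\mathcal{S}^N_{++}$, where matrix inversion is real analytic; hence each $q_p$ is real analytic on $\tilde{\mathcal{U}}$. This yields
\[
D_\Phi \subseteq \bigcup_{p \neq p' \in \mathcal{P}} \bigl\{u \in \tilde{\mathcal{U}} : q_p[u] = q_{p'}[u] \bigr\}.
\]
For any pair with $q_p \not\equiv q_{p'}$ on $\tilde{\mathcal{U}}$, the zero set of $q_p - q_{p'}$ is Lebesgue null by the classical result on zero sets of non-trivial real analytic functions on a connected open domain.

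The main obstacle is to dispose of pairs $p \neq p'$ with $q_p \equiv q_{p'}$ on $\tilde{\mathcal{U}}$, whose zero sets equal $\tilde{\mathcal{U}}$ itself. I would handle them by a local argument: fix $u_* \in D_\Phi$ and set $S_* := \Psi[u_*]$. For $p \in S_*$ and $p'' \in \mathcal{P} \setminus S_*$ we have $q_p[u_*] > q_{p''}[u_*]$ strictly, so by continuity this inequality persists on some neighbourhood $V$ of $u_*$, giving $\Psi[u] \subseteq S_*$ on $V$. If in addition $q_p \equiv q_{p'}$ on $\tilde{\mathcal{U}}$ for every $p, p' \in S_*$, then $\Psi[u] = S_*$ throughout $V$ and $\Phi[u] = \max_{p \in S_*} J[u, p]$ is continuous at $u_*$, contradicting $u_* \in D_\Phi$. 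Consequently every $u_* \in D_\Phi$ lies in the zero set of some non-trivial $q_p - q_{p'}$, hence in a Lebesgue null set; finiteness of $\mathcal{P}$ then implies that $D_\Phi$ is contained in a finite union of Lebesgue null sets, and is therefore itself null.
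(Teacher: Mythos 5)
Your proof is correct, and its skeleton matches the paper's: restrict maximizers to the extreme points $\mathcal{P}$ via strict convexity, use real analyticity of $u \mapsto f^\top M H[u]^{-1} M f$ to get the dichotomy ``coincidence set is Lebesgue null or the difference vanishes identically,'' and then deal separately with the identically-vanishing pairs. Where you genuinely diverge is in that last step. The paper handles it globally: it defines an equivalence relation $f \sim \tilde f$ on $\mathcal{P}$ by $G_{(f,\tilde f)} \equiv 0$, picks representatives $\tilde{\mathcal{P}}$, rewrites $\Phi[u] = \max_{\tilde f \in \tilde{\mathcal{P}} \cap \Psi[u]} \max_{f \in E[\tilde f]} J[u,f]$, and reruns the argument of Proposition \ref{PropDetUSC} on the sets $S[\tilde f]$ where a single representative is optimal. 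You instead argue locally at each putative discontinuity point $u_*$: the strict gap between $\Psi[u_*]$ and $\mathcal{P} \setminus \Psi[u_*]$ persists on a neighbourhood, so if all members of $\Psi[u_*]$ were pairwise equivalent, $\Phi$ would locally equal a finite maximum of continuous functions and be continuous there; hence some non-trivially vanishing difference $q_p - q_{p'}$ must vanish at $u_*$. Your local argument is somewhat more economical, avoiding the equivalence-class bookkeeping and the second pass through the upper-semicontinuity argument. What the paper's global decomposition buys in exchange is the explicit partition $\mathcal{U} = N_B \cup \bigcup_{\tilde f} S[\tilde f]$ of \eqref{eqdecompositionU}, which is reused verbatim in the proof of Theorem \ref{ThMainResult} to identify the null set off which $\Phi[u_k \odot \upsilon] \to \Phi[u \odot \upsilon]$ pointwise; with your version one would have to extract an analogous covering of the discontinuity set (which your proof does implicitly provide, as the finite union of the non-trivial zero sets) before proceeding to the stochastic result.
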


\begin{proof}
    As the lower level goal function is strictly convex, we have $\Psi[u] \subseteq \mathcal{P}$ for any $u \in \mathcal{U}$. 
    By \ref{A4}, for any pair $(f,\tilde f) \in \mathcal{P} \times \mathcal{P}$ the function $G_{(f,\tilde{f})}: \tilde{\mathcal{U}} \to \mathbb{R}$ defined by
    \begin{equation*}
        G_{(f,\tilde{f})}[u] \coloneqq  f^\top M H[u]^{-1} M f - \tilde{f}^\top M H[u]^{-1} M \tilde{f}
    \end{equation*}
    is well-defined and real analytic. Consequently, the set
    \begin{equation*}
        B[f,\tilde{f}] \coloneqq \left\{ u \in \mathcal{U} \mid f, \tilde{f} \in \Psi[u] \right\} 
        \subseteq \left\{ u \in \mathcal{U} \mid G_{(f,\tilde{f})}[u] = 0 \right\}
    \end{equation*}
    of parameters for which $f$ and $\tilde{f}$ are optimal for the lower level problem is a Lebesgue null set, or we have
    \begin{equation*}
        G_{(f,\tilde{f})}[u] = 0
    \end{equation*}
    for any $u \in \mathcal{U}$ by \cite[Proposition 1]{Mi20}.
    
    \medskip
    
    Now we start from the case that $B[f,\tilde{f}]$ is a Lebesgue null set for any $f, \tilde{f} \in \mathcal{P}$ satisfying $f \neq \tilde{f}$. 
    Let $u\in\mathcal{U}$. If $\Psi[u]$ is a singleton, then 
    by Proposition~\ref{PropDetUSC}, 
    $\Phi$ is continuous at $u$. 
    Consequently, the set of discontinuity points of $\Phi$ is contained in
    \begin{equation*}
        \bigcup_{f, \tilde{f} \in \mathcal{P}, \; f \neq \tilde{f}} B[f,\tilde{f}],
    \end{equation*}
    which is a Lebesgue null set by the previous considerations.
    
    \medskip
    
    To take care of the general case, let us consider the following relation on $\mathcal{P} \times \mathcal{P}$:
    \begin{equation*}
        f \sim \tilde{f} \vcentcolon \Leftrightarrow 
        G_{(f,\tilde{f})}[u] = 0
        \quad\text{for any}\ u \in \mathcal{U}.
    \end{equation*}
    It is easy to verify that $\sim$ defines an equivalence relation and that the equivalence class of any extreme point $\tilde{f} \in \mathcal{P}$ is given by
    \begin{equation*}
        E[\tilde{f}] \coloneqq \left\{ f \in \mathcal{P} \mid G_{(f,\tilde{f})}[u] = 0 \quad\forall\,u \in \mathcal{U} \right\}.
    \end{equation*}
    By \eqref{eqPsifrompsi}, $E[\tilde f]\subseteq \Psi[u]$ if $\tilde f\in \Psi[u]\cap\mathcal{P}$. 
    Let $\tilde{\mathcal{P}} \subseteq \mathcal{P}$ contain exactly one element from each equivalence class, then $\Phi$ admits the representation
    \begin{equation*}
        \Phi[u] = \max_{\tilde{f} \in \tilde{\mathcal{P}}\cap \Psi[u]} \left\{ \max_{f \in E[\tilde{f}]} J[u,f] \right\}.
    \end{equation*}
    As $\mathcal{P}$ is finite, for any $\tilde{f} \in \tilde{\mathcal{P}}$ the mapping 
    \begin{equation*}
        u \mapsto  \max_{f \in E[\tilde{f}]} J[u,f]
    \end{equation*}
    is continuous.
    By the same argument as in the proof of Proposition~\ref{PropDetUSC},
    $\Phi$ is continuous on each set
    \begin{equation*}
        S[\tilde{f}] \coloneqq \left\{ u \in \mathcal{U} \mid \{ \tilde{f} \} = \Psi[u] \cap \tilde{\mathcal{P}} \right\}
    \end{equation*}
    of parameters for which $\tilde{f}$ is the only representative that is optimal for the lower level problem. Thus, the set of discontinuities of $\Phi$ is contained in the set
    \begin{equation*}
        N_B \coloneqq \bigcup_{f, \tilde{f} \in \tilde{\mathcal{P}}, \; f \neq \tilde{f}} B[f,\tilde{f}],
    \end{equation*}
    which is a Lebesgue null set by construction of $\tilde{\mathcal{P}}$.
    For later reference we remark that we obtained
    \begin{equation}
        \label{eqdecompositionU}
        \mathcal{U} = N_B \cup
        \bigcup_{\tilde{f} \in \tilde{\mathcal{P}}} S[\tilde f] \quad\text{with}\ \mathcal{L}^n(N_B)=0, 
    \end{equation}
    and that the sets $N_B$ and $S[\tilde f]$ for $\tilde{f} \in \tilde{\mathcal{P}}$ in the right-hand side of \eqref{eqdecompositionU} are pairwise disjoint.
\end{proof}

Throughout the subsequent analysis, we will use the notation introduced in the proof of Lemma \ref{LemmaDiscontinuities}.

\begin{proof}[Proof of Theorem \ref{ThMainResult}]
    Let $u \in \mathcal{U}_\Upsilon$.
    As any upper semicontinuous function is Borel measurable, $\mathbb{F}[u] \in L^0(\Omega, \mathcal{B}, \mathbb{P})$ follows directly from Proposition \ref{PropDetUSC}. Moreover, we have
    \begin{equation}
        \label{eqesssupfinite}
        \mathrm{ess \; sup} \, \mathbb{F}[u] \leq \max_{f \in \mathcal{F}} \sup_{\upsilon \in \mathrm{supp} \, \mu_{\Upsilon}} J[u \odot \upsilon,f] < \infty
    \end{equation}
    by continuity of $J$,  \ref{A1}  and \ref{A3}.
    
    Consider any sequence $\lbrace u_k \rbrace_{k \in \mathbb{N}} \subseteq \mathcal{U}_\Upsilon$ that converges to some $u \in \mathcal{U}_\Upsilon$. We write
    \begin{equation*}
        \lim_{k \to \infty} \left\| \mathbb{F}[u] - \mathbb{F}[u_k] \right\|_{L^p(\Omega, \mathcal{B}, \mathbb{P})}^p
        = \lim_{k \to \infty} \int_{\mathrm{supp} \, \mu_{\Upsilon}} \left|\Phi[u \odot \upsilon] - \Phi[u_k \odot \upsilon] \right|^p~\mu_{\Upsilon}(d\upsilon) .
    \end{equation*}
    The set $\{u\}\cup\{u_k \mid k\in\N\}$ is compact, so that 
    by continuity of $J$ and \eqref{eqesssupfinite} we obtain a uniform bound on the integrand. With \ref{A1} and dominated convergence we see that it suffices to prove pointwise convergence almost everywhere.
    
    Let $N_B$ be as in the proof above, and consider the set
    \begin{equation*}
        \hat N_B \coloneqq \left\{ \upsilon\in  \mathcal{U}_\Upsilon \mid u\odot \upsilon\in N_B \right\}.
    \end{equation*}
    By the change-of-variables formula we obtain $0=\mathcal{L}^n( N_B)=\prod_{i=1}^n u_i \mathcal{L}^n(\hat N_B)$ and, since $u_i>0$ for all $i$, $\mathcal{L}^n(\hat N_B)=0$.
    By \ref{A2}, $\mu_\Upsilon(\hat N_B)=0$.
    
    Fix some $\upsilon\in\mathcal{U}_\Upsilon\setminus \hat N_B$.  Then by
    \eqref{eqdecompositionU} we have
    $u\odot \upsilon\in S[\tilde f]$ for some $\tilde f\in\tilde{\mathcal{P}}$, so that in particular $\Phi$ is continuous at $u\odot \upsilon$. 
    From $u_k\to u$ with $k \to \infty$ we obtain $u_k\odot \upsilon\to u\odot \upsilon$ and therefore $\Phi[u_k \odot \upsilon] - \Phi[u \odot \upsilon]\to0$. This proves  pointwise convergence almost everywhere and concludes the proof.
\end{proof}

\begin{rem}
    The assertion of Theorem \ref{ThMainResult} does not hold for $p=\infty$.
    To see this, we consider the example of Remark \ref{rem12} and extend it to the stochastic setting taking $\Omega=[\frac9{10},\frac{11}{10}]$, $\mathbb P$ proportional to the Lebesgue measure restricted to $\Omega$, and $\Upsilon$ to be the identity, so that $\mathrm{supp}\, \mu_\Upsilon=\Omega$.
    Then $\mathbb F[u](v)=\Phi[uv]=\pm uv$, with the positive sign if and only if $uv\ge1$. For the sequence $u_k:=1-\frac1k\to1$ we have
    $\mathbb F[u_k](v)=u_kv$ for $v\ge 1/u_k$, and 
    $\mathbb F[u_k](v)=-u_kv$ for $v< 1/u_k$. 
    In particular for all $v\in (1, 1/u_k)$ we have
    $\mathbb F[u_k](v)-\mathbb F[1](v)=-u_kv-v$. Taking the 
    supremum over all such $v$ we obtain
    $\|\mathbb F[u_k]-\mathbb F[1]\|_{L^\infty(\Omega,\mathcal B,\mathbb P)}\ge 1+\frac1{u_k}\to2$, hence $\mathbb F[u_k]$ does not converge to $\mathbb F[1]$ in 
    $L^\infty(\Omega,\mathcal B,\mathbb P)$.
\end{rem}

As a generic first choice, the leader might assess the random upper level cost based on its expected value, i.e. consider the risk neutral bilevel stochastic program
\begin{equation}
    \label{eq:expected}
    \min_{u \in \mathcal{U}_\Upsilon} \left\{ \mathbb{E} \left[ \mathbb{F}[u] \right]\right\},
\end{equation}
which is well-defined by Theorem \ref{ThMainResult}. More in general, to allow for varying degrees of risk aversion, we take into account a mapping $\mathcal{R}\colon \mathcal{X} \to \mathbb{R}$ with
\begin{equation*}
    L^\infty(\Omega, \mathcal{B}, \mathbb{P}) 
    \subseteq \mathcal{X} 
    \subseteq L^0(\Omega, \mathcal{B}, \mathbb{P})
\end{equation*}
and consider the bilevel stochastic program
\begin{equation}
    \label{StochasticBilevelProgram}
    \min_{u \in \mathcal{U}_\Upsilon} \left\{ \mathcal{R}\left[ \mathbb{F}[u] \right]\right\}.
\end{equation}
$\mathcal{R}$ will typically be a \textbf{monetary risk measure} in the sense of \cite[Definition 4.1]{FoSc11} meaning it satisfies the following conditions:
\begin{itemize}
    \item Monotonicity: $\mathcal{R}[Y_1] \leq \mathcal{R}[Y_2]$ for all $Y_1, Y_2 \in \mathcal{X}$ satisfying $Y_1 \leq Y_2$ $\mathbb{P}$-almost surely.
    \item Translation equivariance: $\mathcal{R}[Y + m] = \mathcal{R}[Y] + m$ for all $Y \in \mathcal{X}$  and $m \in \R$.
\end{itemize}
Moreover, we will assume the following:
\begin{enumerate}[label=$\mathrm{(A\arabic*)}$,leftmargin=1cm,start=5]
    \item $\mathcal{R}\colon L^p(\Omega, \mathcal{B}, \mathbb{P}) \to \mathbb{R}$ with some $p \in [1,\infty)$ is convex and nondecreasing as defined above.\label{A5}
\end{enumerate}

\begin{rem}
    \ref{A5} holds for any \textbf{convex risk measure} in the sense of \cite{FrGi02} and \cite{FoSc02}, \ie for any monetary risk measure that is convex. In particular, this includes the expectation, the mean-upper semideviation of any order and the Conditional Value-at-Risk. However, as we do not assume translation equivariance, the assumption is also fulfilled for the expected excess of arbitrary order (cf. \cite[Chapter 6]{ShDeRu09}).
\end{rem}

The following result is well-known in the literature, see for example \cite[Theorem 4.1]{ChLi09}. For the convenience of the reader, we provide a short self-contained proof.

\begin{lemma}\label{lemmaconvexmonotoncont}
    Assume \ref{A5}, then the mapping $\mathcal{R}$ is continuous.
\end{lemma}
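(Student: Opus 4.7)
The plan is to combine monotonicity and convexity to sandwich $\mathcal{R}[Y_n]$ between two quantities that both converge to $\mathcal{R}[Y]$ whenever $Y_n \to Y$ in $L^p$. Since finite-valued convex functionals on a Banach space need only be locally bounded above to be continuous, the real content is to produce such a bound from monotonicity, using a dominating random variable and a convex combination trick.

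First, I would reduce to a sequence along which strong $L^p$-convergence is reinforced. Given $Y_n \to Y$ in $L^p(\Omega,\mathcal{B},\mathbb{P})$, any subsequence admits a further subsequence (still denoted $Y_n$) with $\|Y_n - Y\|_{L^p} \le 4^{-n}$. Setting $W \coloneqq \sum_{n\ge 1} 2^n |Y_n - Y|$ one obtains $W \in L^p$ with $|Y_n - Y| \le 2^{-n} W$ pointwise $\mathbb{P}$-a.s., so in particular $Y - 2^{-n} W \le Y_n \le Y + 2^{-n} W$. By the monotonicity assumption in \ref{A5} this gives the sandwich
\begin{equation*}
    \mathcal{R}\bigl[Y - 2^{-n} W\bigr] \le \mathcal{R}[Y_n] \le \mathcal{R}\bigl[Y + 2^{-n} W\bigr].
\end{equation*}

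Next I would use convexity to control each side. Writing $Y + 2^{-n} W = (1 - 2^{-n}) Y + 2^{-n}(Y + W)$ and applying convexity yields $\mathcal{R}[Y + 2^{-n} W] \le (1-2^{-n}) \mathcal{R}[Y] + 2^{-n} \mathcal{R}[Y + W]$, which tends to $\mathcal{R}[Y]$ since $Y + W \in L^p$ and $\mathcal{R}$ is real-valued. For the lower bound, I would write $Y$ itself as a convex combination of $Y - 2^{-n} W$ and $Y + W$, namely $Y = \tfrac{1}{1+2^{-n}}(Y - 2^{-n} W) + \tfrac{2^{-n}}{1+2^{-n}}(Y + W)$; convexity then rearranges to $\mathcal{R}[Y - 2^{-n} W] \ge (1 + 2^{-n}) \mathcal{R}[Y] - 2^{-n} \mathcal{R}[Y + W]$, again tending to $\mathcal{R}[Y]$. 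The two inequalities force $\mathcal{R}[Y_n] \to \mathcal{R}[Y]$ along the chosen subsequence, and because the passage to a subsequence was made inside an arbitrary subsequence of the original one, the standard subsequence principle gives convergence along the whole sequence.

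The main obstacle is really the step of bootstrapping from finite-valuedness to continuity, since convex functionals on $L^p$ need not be continuous without an additional ingredient; monotonicity supplies this ingredient, but only after one has chosen the right dominating element $W$ and the right representation of $Y \pm 2^{-n} W$ as a convex combination involving $Y$ and $Y \pm W$. Getting the factors to match so that convexity yields the desired inequality in both directions is the delicate part; everything else (measurability, the subsequence reduction, and passage to the limit) is routine.
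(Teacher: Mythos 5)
Your proof is correct and rests on the same key device as the paper's: the geometric-series domination $W=\sum_n 2^n\lvert Y_n-Y\rvert\in L^p$ built from $\lVert Y_n-Y\rVert_{L^p}\le 4^{-n}$, combined with monotonicity and a convexity-scaling inequality. The only difference is presentational: the paper argues by contradiction at the single point $0$ (a discontinuity would force $\mathcal{R}$ to be unbounded at the element $\sum_j 2^j\lvert f_j\rvert$ of $L^p$), whereas you run a direct two-sided squeeze at an arbitrary $Y$ followed by the subsequence principle; both are valid.
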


\begin{proof}
    For $f\in L^p(\Omega, \mathcal{B}, \mathbb{P})$ we denote by $|f|\in L^p(\Omega, \mathcal{B}, \mathbb{P})$ the function obtained taking the pointwise absolute value, so that $f\le |f|$, $-f\le |f|$ $\mathbb{P}$-almost everywhere.
    
    It suffices to prove that $\mathcal{R}$ is continuous in $0$, and we can assume that $\mathcal{R}(0) = 0$ 
    (otherwise we replace $\mathcal{R}$ by $\hat {\mathcal{R}}(f) \coloneqq \mathcal{R}(g_*+f)-\mathcal{R}(g_*)$).
    If $\mathcal{R}$ is not continuous, there is $\delta>0$ such that for any $j$ there is $f_j\in L^p(\Omega, \mathcal{B}, \mathbb{P})$ with $\|f_j\|_{L^p(\Omega, \mathcal{B}, \mathbb{P})}<4^{-j}$ and $|\mathcal{R}(f_j)|\ge\delta$. By convexity, $0=\mathcal{R}(0)\le \frac12 \mathcal{R}(f_j)+\frac12 \mathcal{R}(-f_j)$, which implies  $\mathcal{R}(-f_j)\ge -\mathcal{R}(f_j)$.
    By monotonicity, 
    \begin{equation*}
        \mathcal{R}(|f_j|) 
        \ge \max\left\{\mathcal{R}(f_j), \mathcal{R}(-f_j)\right\} 
        \ge \max\left\{\mathcal{R}(f_j), -\mathcal{R}(f_j)\right\}
        = \left|\mathcal{R}(f_j)\right|
        \ge \delta.
    \end{equation*}
    Let $f_* \coloneqq \sum_j 2^j |f_j|\in L^p(\Omega, \mathcal{B}, \mathbb{P})$. Using first monotonicity and then convexity,  we obtain $\mathcal{R}(f_*)\ge \mathcal{R}(2^j|f_j|) \ge 2^j \mathcal{R}(|f_j|) \ge 2^j\delta$ for any $j$, which contradicts the boundedness of $\mathcal{R}(f_*)$.
\end{proof}

\begin{theorem}
    Assume \ref{A1}-\ref{A5}, then the function $\mathcal{Q}_\mathcal{R} \colon \mathcal{U}_\Upsilon \to \mathbb{R}$ defined by 
    \begin{equation*}
        \mathcal{Q}_\mathcal{R}[u] 
        \coloneqq \mathcal{R}\left[\mathbb{F}[u] \right] 
        = \mathcal{R} \left[\Phi[u \odot \Upsilon] \right]
    \end{equation*}
    is continuous. In particular, the bilevel stochastic problem \eqref{StochasticBilevelProgram} has an optimal solution whenever the induced feasible set $\mathcal{U}_\Upsilon$ is nonempty and compact.
\end{theorem}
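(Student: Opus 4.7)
The proof will be a short composition argument assembled from results already established in the excerpt, followed by a standard Weierstrass-type existence statement.

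The plan is to write $\mathcal{Q}_\mathcal{R}$ as the composition $\mathcal{R}\circ\mathbb{F}$ and check that each factor is continuous into the appropriate space. First, I would fix the value of $p\in[1,\infty)$ for which assumption \ref{A5} holds, so that $\mathcal{R}\colon L^p(\Omega,\mathcal{B},\mathbb{P})\to\mathbb{R}$ is convex and nondecreasing. By Theorem \ref{ThMainResult}, the map $\mathbb{F}\colon \mathcal{U}_\Upsilon\to L^\infty(\Omega,\mathcal{B},\mathbb{P})$ is well-defined and continuous in the $L^p$-norm; since $\mathbb{P}$ is a probability measure and hence $L^\infty(\Omega,\mathcal{B},\mathbb{P})\subseteq L^p(\Omega,\mathcal{B},\mathbb{P})$ with the natural inclusion continuous, we may regard $\mathbb{F}$ as a continuous map into $L^p(\Omega,\mathcal{B},\mathbb{P})$ and apply $\mathcal{R}$ to its values.

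Next, by Lemma \ref{lemmaconvexmonotoncont}, the risk measure $\mathcal{R}\colon L^p(\Omega,\mathcal{B},\mathbb{P})\to\mathbb{R}$ is continuous under \ref{A5}. Consequently $\mathcal{Q}_\mathcal{R}=\mathcal{R}\circ\mathbb{F}$ is continuous on $\mathcal{U}_\Upsilon$ as a composition of continuous maps, which is the first assertion.

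Finally, for the existence statement, I would invoke the classical Weierstrass extreme value theorem: a real-valued continuous function on a nonempty compact subset of $\mathbb{R}^n$ attains its infimum. Applied to $\mathcal{Q}_\mathcal{R}$ on $\mathcal{U}_\Upsilon$ (which is a subset of $\mathbb{R}^n$, nonempty and compact by hypothesis), this yields a minimizer of \eqref{StochasticBilevelProgram}.

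No real obstacle is expected: the substantive analytic work has already been carried out, namely the continuity of $\mathbb{F}$ into $L^p$ (Theorem \ref{ThMainResult}, whose proof used the real analyticity hypothesis \ref{A4}, the polyhedral structure \ref{A3}, and the absolute continuity \ref{A2} to push a pointwise-a.e. continuity argument through dominated convergence) and the automatic continuity of convex monotone functionals on $L^p$ (Lemma \ref{lemmaconvexmonotoncont}). The only minor point to mention explicitly is the embedding $L^\infty\hookrightarrow L^p$ on a probability space, so that $\mathcal{R}$ is indeed defined at $\mathbb{F}[u]$; apart from that, the proof amounts to a one-line composition and an appeal to Weierstrass.
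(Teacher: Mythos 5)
Your proposal is correct and follows exactly the paper's argument: the paper's own proof is the one-line composition of Lemma \ref{lemmaconvexmonotoncont} (continuity of $\mathcal{R}$ on $L^p$) with Theorem \ref{ThMainResult} (continuity of $\mathbb{F}$ in the $L^p$-norm), with the Weierstrass step left implicit. Your explicit remark on the embedding $L^\infty(\Omega,\mathcal{B},\mathbb{P})\hookrightarrow L^p(\Omega,\mathcal{B},\mathbb{P})$ on a probability space is a small but welcome addition that the paper omits.
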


\begin{proof}
    As $\mathcal{R}$ is continuous by Lemma \ref{lemmaconvexmonotoncont}, the result follows from Theorem \ref{ThMainResult}.
\end{proof}

Let us now consider the stochastic version of the modified problem \eqref{ModifiedPessimisticProblem}, where the leader hedges against all $\eta$-optimal lower level solutions. For this, we will use the notion of law-invariant risk measure: 
\begin{equation*}
    \mathcal{R}[Y_1] 
    = \mathcal{R}[Y_2]\quad \text{for all}\ Y_1, Y_2 \in \mathcal{X}\ \text{with}\ \mathbb{P} \circ Y_1^{-1} 
    = \mathbb{P} \circ Y_2^{-1},
\end{equation*}
\ie for all $Y_1$, $Y_2$ which induce the same Borel probability measure.
The following existence result is obtained for law-invariant, convex risk measures under weaker assumptions, where we no longer restrict the analysis to polyhedral $\mathcal{F}$ and real analytic $H$.

\begin{theorem}
    Assume \ref{A1} and \ref{A5} and let $\mathcal{R}$ be translation equivariant as well as law-invariant. Then the mapping $\mathcal{Q}_{\mathcal{R}, \eta}\colon \mathcal{U}_\Upsilon \to \mathbb{R}$ given by
    \begin{equation*}
        \mathcal{Q}_{\mathcal{R}, \eta}[u] 
        \coloneqq \mathcal{R} \left[\Phi_\eta \left[ u \odot \Upsilon \right] \right]
    \end{equation*}
    is well-defined and lower semicontinuous. In particular, the bilevel stochastic program
    \begin{equation*}
        \min_{u \in \mathcal{U}_\Upsilon} \left\{ \mathcal{Q}_{\mathcal{R}, \eta}[u] \right\}
    \end{equation*}
    is solvable, whenever $\mathcal{U}_\Upsilon$ is nonempty and compact.
\end{theorem}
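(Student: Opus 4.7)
The plan is to leverage three ingredients that are already in our hands: the lower semicontinuity of $\Phi_\eta$ from Proposition~\ref{PropPhiEta}, the $L^p$-continuity of $\mathcal{R}$ established in Lemma~\ref{lemmaconvexmonotoncont}, and the essential boundedness inherited from compactness of $\mathcal{F}$, continuity of $J$, and~\ref{A1}. The main twist relative to Theorem~\ref{ThMainResult} is that $\Phi_\eta$ is only lower (not upper) semicontinuous, so almost-sure convergence of $\Phi_\eta[u_k \odot \Upsilon]$ to $\Phi_\eta[u_* \odot \Upsilon]$ cannot be expected; I will replace it by a monotone-envelope argument.

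For well-definedness I will use that a lower semicontinuous function is Borel measurable, so composing with the continuous map $\upsilon \mapsto u \odot \upsilon$ and the random vector $\Upsilon$ shows $\Phi_\eta[u \odot \Upsilon]$ is measurable. Assumption~\ref{A1} guarantees that $\{u \odot \upsilon : \upsilon \in \mathrm{supp}\,\mu_\Upsilon\}$ lies in a compact subset of $\mathcal{U}$; combined with compactness of $\mathcal{F}$, continuity of $J$, and $\Psi_\eta[\cdot] \subseteq \mathcal{F}$, this bounds $|\Phi_\eta[u \odot \upsilon]|$ uniformly in $\upsilon \in \mathrm{supp}\,\mu_\Upsilon$. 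Hence $\Phi_\eta[u \odot \Upsilon] \in L^\infty(\Omega, \mathcal{B}, \mathbb{P}) \subseteq L^p(\Omega, \mathcal{B}, \mathbb{P})$ and $\mathcal{Q}_{\mathcal{R},\eta}[u]$ is defined.

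For lower semicontinuity, pick a sequence $u_k \to u_*$ in $\mathcal{U}_\Upsilon$ and define the monotone envelope
\begin{equation*}
    Z_k \coloneqq \inf_{j \geq k} \Phi_\eta[u_j \odot \Upsilon].
\end{equation*}
Each $Z_k$ is measurable, the sequence is nondecreasing in $k$, and all $Z_k$ are uniformly bounded since the bound of the previous step applies to the compact set $\{u_*\} \cup \{u_k\}_{k \in \mathbb{N}}$. For every $\omega$, lower semicontinuity of $\Phi_\eta$ along $u_k \odot \Upsilon(\omega) \to u_* \odot \Upsilon(\omega)$ gives
\begin{equation*}
    Z_\infty(\omega) \coloneqq \lim_{k \to \infty} Z_k(\omega)
    = \liminf_{k \to \infty} \Phi_\eta[u_k \odot \Upsilon(\omega)]
    \geq \Phi_\eta[u_* \odot \Upsilon(\omega)].
\end{equation*}
Now I chain three observations: monotonicity of $\mathcal{R}$ yields $\mathcal{R}[Z_k] \leq \mathcal{Q}_{\mathcal{R},\eta}[u_k]$ for every $k$ and $\mathcal{R}[Z_\infty] \geq \mathcal{Q}_{\mathcal{R},\eta}[u_*]$; dominated convergence provides $Z_k \to Z_\infty$ in $L^p$; and Lemma~\ref{lemmaconvexmonotoncont} turns this into $\mathcal{R}[Z_k] \to \mathcal{R}[Z_\infty]$. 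Stringing these together,
\begin{equation*}
    \liminf_{k \to \infty} \mathcal{Q}_{\mathcal{R},\eta}[u_k]
    \geq \lim_{k \to \infty} \mathcal{R}[Z_k]
    = \mathcal{R}[Z_\infty]
    \geq \mathcal{Q}_{\mathcal{R},\eta}[u_*],
\end{equation*}
which is the desired lower semicontinuity; existence of a minimizer on any nonempty compact $\mathcal{U}_\Upsilon$ then follows from Weierstrass' theorem for lower semicontinuous functions.

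The principal obstacle is precisely the one-sided nature of the convergence: without an almost-sure limit for $\Phi_\eta[u_k \odot \Upsilon]$, the monotone-envelope construction is essential, because it simultaneously respects the monotonicity of $\mathcal{R}$ and transfers to an $L^p$-limit via dominated convergence. The translation equivariance and law-invariance hypotheses do not seem to enter this argument beyond what Lemma~\ref{lemmaconvexmonotoncont} already provides; they are standard structural properties of $\mathcal{R}$ that are natural and fully consistent with the bound $\Phi_\eta[u \odot \Upsilon] \in L^\infty$ derived above.
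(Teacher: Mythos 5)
Your proof is correct, but it takes a genuinely different route from the paper's. The paper proves lower semicontinuity through the dual side: it invokes the robust representation $\mathcal{R}[\mathcal{Y}] = \sup_{\mathbb{P}' \in \mathrm{Env}} \{ \mathbb{E}_{\mathbb{P}'}[\mathcal{Y}] - \mathcal{R}^\ast[\mathbb{P}'] \}$ (which is where translation equivariance and law-invariance are actually used, to apply the Kaina--R\"uschendorf results), shows via Fatou's lemma that $u \mapsto \mathbb{E}_{\mathbb{P}'}[\Phi_\eta[u \odot \Upsilon]]$ is lower semicontinuous for each fixed $\mathbb{P}'$ in the risk envelope, and concludes because a pointwise supremum of lower semicontinuous functions is lower semicontinuous. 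You instead argue entirely on the primal side: the monotone envelope $Z_k = \inf_{j \ge k} \Phi_\eta[u_j \odot \Upsilon]$ is squeezed between $\mathcal{Q}_{\mathcal{R},\eta}[u_k]$ (by monotonicity of $\mathcal{R}$) and $\mathcal{Q}_{\mathcal{R},\eta}[u_*]$ (by pointwise lower semicontinuity of $\Phi_\eta$ plus monotonicity), and the gap is closed by dominated convergence in $L^p$ together with the norm-continuity of $\mathcal{R}$ from Lemma~\ref{lemmaconvexmonotoncont}. All the individual steps check out: $Z_k$ is a countable infimum of measurable functions, the uniform bound over the compact set $\{u_*\}\cup\{u_k\}_{k\in\mathbb{N}}$ is justified exactly as in \eqref{eqesssupfinite}, and the final chain of inequalities is valid since $\mathcal{R}[Z_k]$ actually converges. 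What your approach buys is precisely what you observe in your closing remark: you never need the robust representation, so translation equivariance and law-invariance are not required --- assumptions \ref{A1} and \ref{A5} alone suffice. This is a genuine (if modest) strengthening of the stated theorem. What the paper's approach buys is a template that generalizes to settings where $\mathcal{R}$ is not norm-continuous but still admits a dual representation, and it parallels standard arguments in the risk-measure literature; but for the hypotheses as stated, your argument is the more economical one.
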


\begin{proof}
    First, note that $\Phi_\eta \left[u \odot \Upsilon \right] \in L^0(\Omega, \mathcal{B}, \mathbb{P})$ and
    \begin{equation*}
        \left\| \Phi_\eta \left[u \odot \Upsilon \right] \right\|_{L^\infty(\Omega, \mathcal{B}, \mathbb{P})} 
        \leq \sup_{\upsilon \in \mathrm{supp} \, \mu_\Upsilon} \Phi_\eta \left[u \odot \upsilon \right] 
        \leq \sup_{\upsilon \in \mathrm{supp} \, \mu_\Upsilon} \sup_{f\in \mathcal{F}} J[u \odot \upsilon,f] < \infty
    \end{equation*}
    hold for any $u \in \mathcal{U}_\Upsilon$ by Proposition \ref{PropPhiEta} and \ref{A1}. Thus, $\mathcal{Q}_{\mathcal{R}, \eta}$ is well-defined.
    
    \medskip
    
    Let $\mathcal{R}^\ast$ denote the convex conjugate of $\mathcal{R}$ (cf. \cite[Theorem 2.1]{KaRu09}), then $\mathcal{R}$ admits a robust representation as
    \begin{equation*}
        \mathcal{R}[\mathcal{Y}] 
        = \sup_{\mathbb{P}' \in \mathrm{Env}} \left\{ \mathbb{E}_{\mathbb{P}'}[\mathcal{Y}] - \mathcal{R}^\ast[\mathbb{P}'] \right\} \quad\forall\,\mathcal{Y} \in L^p(\Omega, \mathcal{B}, \mathbb{P}),
    \end{equation*}
    where the risk envelope $\mathrm{Env}$ is a subset of the normed positive part of the dual space of $L^p(\Omega,\mathcal{B},\mathbb{P})$ by \cite[Corollary 2.3, Theorem 2.4]{KaRu09}. Fix any $\mathbb{P}' \in \mathrm{Env}$. We shall show that the mapping $u \mapsto \mathbb{E}_{\mathbb{P}'} \left[ \Phi_\eta \left[ u \odot \Upsilon \right]\right]$ is lower semicontinuous. The result then follows because the pointwise supremum of lower semicontinuous functions is lower semicontinuous (cf. Lemma~\ref{LemmaLowerLevelOptimalValueFunction} and \cite[Proposition 1.26 (a)]{RoWe09}).
    
    \medskip
    
    Consider any sequence $\lbrace u_k \rbrace_{k \in \mathbb{N}} \subseteq \mathcal{U}_\Upsilon$ that converges to some $u \in \mathcal{U}_\Upsilon$. Without loss of generality, we assume that $u_k \in B_1(u) \cap \mathcal{U}_\Upsilon$ holds for any $k \in \mathbb{N}$, where $B_1(u)$ denotes the open Euclidean unit ball around $u$ (We denote its closure by $\overline{B}_1(u)$). By definition,
    \begin{equation*}
        \Phi_\eta \left[ u_k \odot \Upsilon \right] 
        \geq \min_{\upsilon \in \mathrm{supp} \mu_\Upsilon} \min_{u \in \overline{B}_1(u) \cap \mathcal{U}_\Upsilon} \min_{f \in \mathcal{F}} J \left[u \odot \upsilon,f \right] 
        =: \underline{J}
    \end{equation*}
    holds for any $k \in \mathbb{N}$. As $J$ is continuous and $\mathrm{supp} \, \mu_\Upsilon$, $\overline{B}_1(u) \cap \mathcal{U}_\Upsilon$, and $\mathcal{F}$ are nonempty and compact, we have $\underline{J} \in \mathbb{R}$. Thus, Fatou's Lemma yields
    \begin{equation*}
        \liminf_{k \to \infty} \mathbb{E}_{\mathbb{P}'}\left[ \Phi_\eta \left[ u_k \odot \Upsilon \right]\right] 
        \geq \int_{\Omega} \liminf_{k \to \infty} \Phi_\eta \left[ u_k \odot \Upsilon \right]~\mathbb{P}'(d\omega) 
        \geq \mathbb{E}_{\mathbb{P}'}\left[ \Phi_\eta \left[u \odot \Upsilon \right]\right],
    \end{equation*}
    which completes the proof.
\end{proof}

\section{Application: Discrete Shells}

In this section, we will apply bilevel optimization to a mechanical shape optimization problem.
Our aim is to determine the optimal elastic design of curved roof-type constructions.
The leader in this setup is the construction engineer who aims at minimizing a tracking-type functional via optimizing the distribution of material on a prescribed roof geometry.
Due to production errors, the material distribution is considered to be stochastically perturbed in the actual construction phase.
The follower is a test engineer, who is performing a worst-case analysis and considers within a given set of possible forces---for example wind and roof load---those that maximize the compliance functional. 

\subsection{General Setting and Problem Formulation}
Our model problem is taken from the literature on geometric design \cite{VoHoWa12}, but our mechanical perspective is not self-supporting 
structures but instead architectural structures composed of discrete thin shells. Indeed, 
we model the mechanical properties of a roof construction using an adaptation of the discrete elastic shell model by Grinspun \etal \cite{GrHiDeSc03}, in which the geometry is a triangular surface and each triangle is considered as a construction panel, 
with joints at the edges.
The membrane distortion deforms the individual panels, whereas the bending distortion leads to a change of the dihedral angle between pairs of panels that share an edge.
Let us emphasize that the discrete shell approach is a design tool and does not act as a computational tool for the full elastostatic modeling in a later planning stage.
In fact, we consider the discrete shell model mainly as a testbed for the proposed bilevel optimization approach. 
We underline this by reporting all physical quantities without units.

Comparing with the notation in the previous section, the design parameter $u$ will represent the thickness of the shell, $f$ the applied forces, and $y$ the resulting displacement of the shell. The minimization in equation \eqref{LowestLevelProblem}
then corresponds to the solution of a linear elasticity problem in \eqref{eq:free_energy}, with $H[u]$ representing the elastic energy. The problem in \eqref{eqdefPsiu} corresponds to the follower optimizing compliance. The leader's cost functional $J$ in \eqref{PessimisticProblem} measures the deviation from the prescribed shape and is defined in \eqref{eqdefJmaff} below.

We consider the simplicial mesh of a discrete shell  $\mathcal{S}_h = (\vertices, \edges, \faces)$ consisting of sets of vertices 
$\vertices$, edges $\edges \subset \vertices \times \vertices$ and triangular faces $\faces \subset \vertices \times \vertices \times \vertices$. 
In what follows, we use maps defined on the different elements of such a mesh instead of vectors used in the theoretical considerations above.
For example, a map $w \colon \vertices \to \R^k$ assigning each vertex a value in this section corresponds to a vector $\R^{k\numV}$ from the previous sections and similarly for functions defined on edges and faces.
We denote evaluations $w(\vertex)$ of such a map also via indexing to simplify notation, \ie $w_\vertex \coloneqq w(\vertex) \in \R^k$.

The geometry of a discrete shell is given by a map \(\x \colon \vertices \to \R^3\) subject to the constraint that for each face there is no straight line in $\R^3$ containing all three vertices, \ie no triangle degenerates to a line. 
Thus, each triangle $\face \in \faces$ with vertices $\vertex_0$, $\vertex_1$, $\vertex_2$ can be parametrized over the reference triangle in $\R^2$ with vertices $(0,0)$, $(1,0)$ and $(0,1)$ via the affine map $\x_\face$ interpolating $\x(\vertex_0)$, $\x(\vertex_1)$, $\x(\vertex_2)$. We denote by $D\x_\face$ the differential of this affine map for face $\face$, so that the associated metric tensor in the same face is
\begin{equation*} 
    \dfFF[\x_\face] \coloneqq ( D\x_\face)^\top D\x_\face.
\end{equation*}
We denote by $\xref\colon\vertices\to\R^3$ the fixed stress-free reference configuration of the discrete shell, and parametrize the deformed configuration \(\x = \xref+\y\) in terms of the elastic displacement of the vertices \(\y \colon \vertices \to \R^3\).
We denote by \(\len_\edge\) the length of an edge $\edge\in\edges$ and by \(\area_\face\) the area of a face $\face\in\faces$ in the reference configuration. 
Then,  \(\area_\edge \coloneqq \tfrac{1}{3}(\area_\face + \area_{\face'})\) is a corresponding edge-associated area, where  \(\face\) and \(\face'\) are the two faces adjacent to the interior edge \(\edge \in \edges\); correspondingly \(\area_\vertex \coloneqq \frac{1}{3} \sum_{\face \in \faces_\vertex} \area_\face\) a vertex-associated area for the ring of faces \(\faces_\vertex\) around a vertex \(\vertex \in \vertices\). 

The design variable is the material thickness parameter, which is assumed to be constant on each of the triangles and is denoted by \(\mat\colon \faces \to (0,\infty)\).
In order to evaluate the bending contribution to the energy, 
see \eqref{eq:bending_energy} below, we shall use on an interior edge $\edge$ the averaged thickness  \(\mat_\edge \coloneqq \tfrac12(\mat_\face+ \mat_{\face'})\) of the two triangles \(\face\) and \(\face'\) sharing the edge \(\edge\).

\paragraph{Variational Formulation of Discrete Shells.}

In the modeling of thin shells, the elastic stored energy is typically the sum of two terms: the stored energy caused by in-plane membrane distortion and the stored energy reflecting bending distortion \cite{ciarlet2000Shells,loveElasticity}. 
The two terms scale linearly and cubically, respectively, in the thickness of the shell.

For a displacement $\y$, the Cauchy-Green strain tensor measuring the change of lengths, and consequently area, of a face $\face$ is given by 
\begin{equation*}
    \dDisTen[\y] \coloneqq \left( \dfFF[\xref_\face] \right)^{-1} \dfFF[(\xref+\y)_\face].
\end{equation*}
Then, the membrane energy depends on this tensor and is defined as 
\begin{equation*}
    \W_\mem[\mat,\y] \coloneqq \sum_{\face \in \faces}  \area_\face \, \mat_\face \, W_\mem(\dDisTen[\y]\vert_\face),
\end{equation*}
where we use the neo-Hookean energy density 
\begin{equation*}
    W_{\mem}(A) \coloneqq \frac{\mu}{2}\tr A + \frac{\lambda}{4}\det A -\left(\mu+\frac{\lambda}{2}\right)\log \det A - \mu - \frac{\lambda}{4}.
\end{equation*}
The linearization of this energy coincides with the planar, isotropic, linearized elasticity model with Lam\'e-Navier coefficients \(\mu\) and \(\lambda\) \cite{ciarlet19883d,loveElasticity}.
In the following, we use \(\mu = \lambda = 1\).

For the bending energy, we follow \cite{HeRuSc14} and use an adaptation of the discrete shell bending energy introduced in \cite{GrHiDeSc03}.
It measures the change of the dihedral angles between a pair of neighboring triangles $\face$ and $\face'$ due to the displacement $\y$ in the configuration $\x$. The angle is computed as
\(\dih_\edge(\x) \coloneqq \arccos ( n_{\face}(\x)^\top n_{\face'}(\x) )\), where  $n_{\face}(\x)$ and $n_{\face'}(\x)$ are the unit normals generated by the deformation $x$, and the energy takes the form
\begin{equation}
    \label{eq:bending_energy}
    \W_\bend[\mat,\y] \coloneqq \gamma  \sum_{\edge\in\edges} \mat_\edge^3 \cdot \frac{(\dih_\edge(\xref+\y) - \dih_e(\xref))^2}{\area_\edge}\len_\edge^2
\end{equation}
for some constant $\gamma>0$, which in continuum models can be expressed in terms of $\lambda$ and $\mu$.  We use $\gamma=1$.

The stored elastic energy $\W[\mat,\y]$ is the sum of these two energies,
\begin{equation*}
    \W[\mat,\y] \coloneqq  \W_\mem[\mat,\y] + \W_\bend[\mat,\y],
\end{equation*}
so that the total free energy in the presence of external forces $\force \colon \vertices \to \R^{3}$  reads as
\begin{equation}
    \label{eq:free_energy}
    \freeE[\mat, \force,\y] = \W[\mat,\y] - \force^\top \mass \y, 
\end{equation}
where $\mass$ is a diagonal mass matrix in $\R^{3\numV \times 3\numV}$ with entries $\area_\vertex$ at positions $(i,i)$ with $i=3j-k$ for $j=1,\ldots, \numV$ and $k=0,1,2$.
The elastic displacements resulting from applying the forces to the reference configuration are the minimizers of this energy.

\medskip

In what follows, we restrict ourselves to the linearization of this model. 
We denote by $\H[\mat] \coloneqq \partial^2_{\y\y} \W[\mat,0]$ the Hessian of the stored elastic energy, 
and obtain the linearized stored elastic energy
\begin{equation*}
    \W^\lin[\mat,\y] \coloneqq \frac{1}{2} \y^\top \H[\mat] \y
\end{equation*}
as well as the linearized total free energy
\begin{equation*}
    \freeE^\lin[\mat, \force, \y] \coloneqq \W^\lin[\mat,\y] - \force^\top \mass   \y,
\end{equation*}
whose minimization corresponds to the innermost problem introduced in \eqref{LowestLevelProblem}.
Prescribing suitable boundary data $\y_\vertex=0$ on a set of at least three vertices $\vertex \in \vertices$, which do not lie on a line, one can deduce (\cf \cite{HeRuSc14}) that $\H[\mat]$ is a positive-definite matrix.
As written above expression \eqref{LowestLevelSolution}, for every $\mat$ and $\force$ the energy 
$\freeE^\lin[\mat, \force, \cdot]$ has a unique minimizer, which is also the unique solution of the associated Euler-Lagrange equation
\begin{equation*}
    0 = \partial_\y \freeE^\lin[\mat, \force, \y] =  \H[\mat] \y - \mass \force.
\end{equation*}

\paragraph{The Optimization Problem.}

To complete our practical optimization problem, we need to specify the admissible set of material parameters \(\mathcal{U}\), the admissible set of force parameters \(\mathcal{F}\), and the cost functional of the leader \(J\).
The objective of the lower level optimal value function \(\psi\) is already completely defined in \eqref{lowerlevelproblem} and equals the compliance functional evaluated for the displacement $\y[\mat, \force]$, \ie 
\begin{equation*}
    \psi[u] = \underset{f \in \mathcal{F}}{\max} \left\{ \force^\top\mass  \y[\mat, \force] \right\} = \underset{f \in \mathcal{F}}{\max} \left\{\force^\top\mass H[u]^{-1} \mass \force \right\}.
\end{equation*}

The admissible set of force parameters \(\mathcal{F}\) is assumed to consist of linear combinations of a small number of different load scenarios.
We assume that the forces are of the type \(f = B F\), where $F\in \R^d$ for some \(d \ll 3\numV\) are the coefficients,  and the columns \(B_j\) of the matrix \(B\in\R^{3\numV \times d}\) are the basis of the $d$-dimensional subspace of forces.
Therefore, each \(B_j\in  \R^{3\numV}\) represents a force distribution on the reference configuration $\hat \x$ which is then scaled with $F_j\in\R$, for $j=1,\dots, d$.
The components of these basis vectors could be determined, for example,  from the
location of the vertex or the inclination of the triangular faces sharing a vertex.
Furthermore, we consider different constraints on the values of the scale factors $F_j$, \ie we assume that the set $\mathcal{F}$ is given by $\bigcap_{k=1}^K \mathcal{F}_k$ with 
\begin{equation*}
    \mathcal{F}_k \coloneqq \left\{ BF\in \R^{3|\vertices|} \mid F\in \R^d,\, \mathcal{Q}^F_k(F) \geq 0 \right\}
\end{equation*}
for some smooth functions $\mathcal{Q}^F_k$ for $k=1,\ldots, K$.
For example, if $\mathcal{F}$ consists of the forces which fulfill $\lvert F \rvert \leq \mu$ then one might choose 
$d=3|\vertices|$, $B=\Id$, $K=1$, and 
$\mathcal{Q}_1^F(F) = \mu^2 - \lvert F \rvert^2$.

In the problem of the leader, we constrain the material thickness parameter $\mat$ elementwise from below and from above, 
and we assume that the total volume of material, determined via the discrete integral of $\mat$, is below some fixed positive parameter. 

Lastly, the upper level cost functional is considered to be of tracking-type and measures the  squared discrete  $L^2$-norm of the 
displacement on a predefined tracking subset of the whole shell,
\begin{equation}
    \label{eqdefJmaff}
    J[\mat,f] \coloneqq \y[\mat,f]^\top\chi  \odot M \y[\mat,f] 
    =\sum_{\vertex\in\vertices} \chi_\vertex M_{\vertex\vertex} 
    \left|\y_\vertex[\mat,f] \right|^2.
\end{equation}
Here $\chi \colon \vertices \to \{0,1\}$ is a discrete characteristic function with value $1$ at vertices in the tracking set and $0$ elsewhere.

\medskip

In the stochastic setting, we restrict ourselves to the expected value $\mathbb{E}\left[ \mathbb{F}[u] \right]$ as the risk measure  for the optimization (cf. \eqref{eq:expected}).
Furthermore, the stochastic perturbation of the distribution of the thickness parameter $\mat$ is given by i.i.d.\ normal distributions for each parameter, \ie we consider the perturbed material \(\mat \odot \Upsilon\) for \(\Upsilon \sim \mathcal{TN}(1,\sigma^2,\ymin, \ymax)^\numF\), 
where \(\mathcal{TN}(1,\sigma^2,\ymin, \ymax)\) is the truncated normal distribution with average $1$ and standard deviation $\sigma$, truncated to the interval $[\ymin,\ymax]$. In practice, we take  $\sigma\le 0.2$,
$\ymin=10^{-2}$ and $\ymax=2$, so that the truncation has little effect and $\sigma$ is almost identical to the standard deviation of $\Upsilon$.

We further fix constants $0<\mat^-<\mat^+$ and $V^+>0$ and define implicitly $\mathcal{U}$ by the condition
\begin{equation*}
    \mathcal{U}_\Upsilon = \left\{ \mat \colon \faces \to \R \mid \mat^- \leq \mat_\face \leq \mat^+ \quad\forall\, \face \in \faces, \, \sum_{\face \in \faces} \area_\face \mat_\face \leq V^+ \right\} \subset (0,\infty)^\numF.
\end{equation*}

\begin{figure}[t]
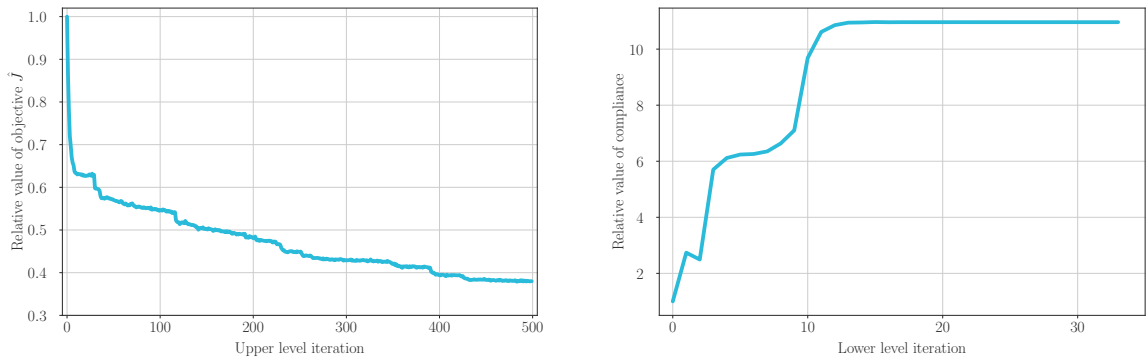

	\centering
	\hfil
	\resizebox{0.45\textwidth}{!}{\import{figures/convergence/}{Leader.pgf}} \hfil\hfil
	\resizebox{0.45\textwidth}{!}{\import{figures/convergence/}{Follower.pgf}}\hfil
	\caption{
		Left: upper level relative cost values $\hat{J}[\mat^i]/\hat{J}[\mat^0]$ for the iterates of the stochastic gradient descent method 
		in the example shown in the bottom row of Figure~\ref{fig:toymodel}.
		Right: corresponding lower level compliance cost 
		$\frac{\y[\mat, BF^j]^\top \H[\mat]\, \y[\mat, BF^j]}{\y[\mat, BF^0]^\top \H[\mat]\, \y[\mat, BF^0]}$ for iterates of the Newton-type method 
		for the follower problem in the first upper level descent step and the initial material distribution.} 
	\label{fig:convergence}
\end{figure}
\subsection{Numerical Optimization}

To numerically solve the bilevel problem \eqref{PessimisticProblem} in the presented setting, it is convenient to replace the restriction of $u$ and $f$ to admissible sets $\mathcal{U}_\Upsilon$ and $\mathcal{F}$ by smooth approximations and then to deal with a differentiable problem. 
In our implementation, we achieve this by using logarithmic barrier functions, as commonly used in interior point methods (see \eg textbook \cite{NoWr06}). Hence, with the structural assumptions on the set of admissible forces introduced above, we define the smoothed follower problem by
\begin{equation}
    \label{eqpsialphanumeric}
    \Psi_\alpha[\mat] \coloneqq \underset{F \in \R^{d}}{\argmax} \left\{ \y[\mat, BF]^\top \H[\mat]\, \y[\mat, BF] + \alpha^F \sum_{k=1}^K
    \log \left(\mathcal{Q}^F_k(F) \right) \right\},
\end{equation}
where $\alpha^F > 0$ is an appropriate scaling factor for the barrier terms.

To compute the minimizers in \eqref{eqpsialphanumeric}, we do not aim at a global minimization approach but rather use an ascent method (see below) to compute isolated local minimizers.
Thus, we assume in the numerical optimization of the leader problem, that the solution of the follower problem is of such type. 
This allows us to apply conventional nonlinear optimization algorithms. 
In this framework, the maximizer and the set $\Psi_\alpha$ be interchangeable.
In the examples considered below, this assumption is justified by the use of asymmetric triangulations, and additionally by the symmetry-breaking random perturbations of the material thickness.
Thus, the logarithmic barrier formulation of the expected value optimization problem for the leader is
\begin{equation*}
    \underset{\mat \in \R^{|\faces|}}{\min} \left\{ \mathbb{E} \left[ J \left[\mat\odot \Upsilon,\Psi_\alpha[\mat\odot \Upsilon] \right] \right]  
    - \alpha^u \sum_{\face \in \faces} \area_\face \left( \log(\mat_\face-\mat^-) + \log(\mat^+ - \mat_\face)\right)
    - \alpha^V \log\left(V^+ - \sum_{\face \in \faces} \area_\face \mat_\face \right) \right\}
\end{equation*}
for scaling factors $\alpha^u, \,\alpha^V >0$  as before.

\medskip

This regular reformulation of the optimization problem can be solved numerically using a stochastic gradient method. 
For PDE-constrained  shape  optimization  problems  under  uncertainty, this method is analyzed in \cite{GeLoWe21}.
In our case, the smoothed follower problem is a deterministic and smooth optimization problem, and computing its first and second derivatives is straightforward.
Thus, we use a Newton-type method with Armijo backtracking line search (\cf \cite[Algorithm 3.2]{NoWr06}) to compute its optimizers.
The gradients of the smoothed bilevel problem can be computed via the general procedure of shape optimization calculus and thus, we employ stochastic gradient descent \cite{RoMo51} to solve it.
To this end, in each iteration of the descent algorithm, we draw finitely many samples \(\upsilon^1,\ldots, \upsilon^K\) from the distribution of the material perturbation.
In the experiments, we always chose $K=128$.
Using these samples, we approximate the expected value by the empirical risk \(\hat{J}[\mat] \coloneqq \tfrac{1}{K}\sum_{k=1}^K J \left[\mat\odot \upsilon^k,\Psi_\alpha[\mat\odot \upsilon^k] \right]\).
Then a new iterate is computed by taking a step in the direction of the negative gradient of the combination of the empirical risk and the logarithmic barrier terms.
Figure~\ref{fig:convergence} depicts the decrease of the upper level cost functional over the iterations of the stochastic descent algorithm and the increase of the lower level compliance cost when solving the follower problem for the initial material distribution.
Latter solves of the follower problem typically require 10 to 30 iterations of the Newton-type method per outer iteration.

We have implemented our method in C\texttt{++} with the Geometric Optimization And Simulation Toolbox (GOAST) \cite{GOAST}, where we use the Eigen library \cite{GuJa10} for numerical linear algebra and CHOLMOD \cite{ChDaHa08} from the SuiteSparse collection as direct linear solver. 
The code is available under \url{https://gitlab.com/numod/bilevel-shape-optimization}.

\subsection{Numerical Results}
We applied the bilevel shape optimization method in a proof-of-concept study of discrete shells representing curved roofs. We fix an orientation so that the negative $Z$-axis is in the direction of gravity and
the supporting ground is in the $XY$-plane. 
For each geometry, we fix a set of Dirichlet vertices near the ground plane, representing the points on which the structure is supported, and also fix the material thickness of the corresponding triangles.
This removes these variables from the optimization.

The construction is exposed to two types of forces.
First, there are forces emulating wind hitting the structure. 
For a given wind direction and strength, the force on each part of the roof depends on the local orientation. 
We assume that the magnitude of the force on a vertex is proportional to the absolute value of the scalar product between the vertex normal (given as the average of the normals of the triangles adjacent to the vertex) and the  wind direction.
For simplicity, we only consider a two-dimensional subset of possible forces, spanned by the basis vectors \(B_1\) and \(B_2\) which represent wind along the positive $X$- and $Y$-axis, respectively. 
The direction and magnitude of the wind are then controlled by the scale factors $F_1$ and $F_2$.
We fix a maximal magnitude of wind-type force $F_{\mathrm{max},xy}$ and use the constraint function \(\mathcal{Q}_1^F(F) \coloneqq F^2_{\mathrm{max},xy} - \left(F_1^2+ F_2^2\right) \) in \eqref{eqpsialphanumeric}.
An example of these two basis vectors demonstrating the dependence on the orientation of the normal is shown in the second and third panels of Figure \ref{fig:setup}.
\begin{figure}[t]
	\centering
	\includegraphics[width=0.24\textwidth]{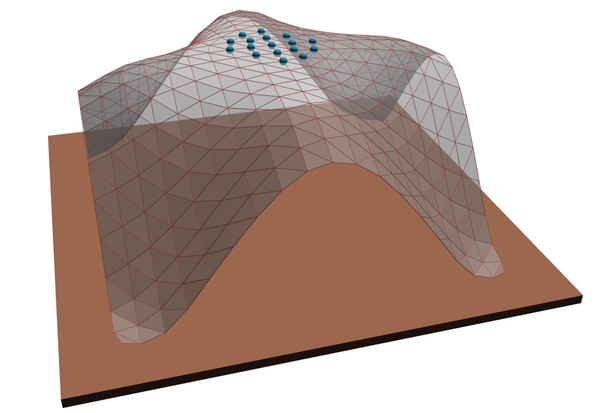}
	\includegraphics[width=0.24\textwidth]{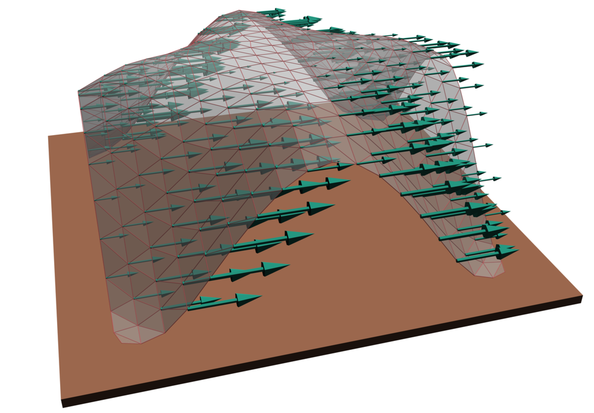}
	\includegraphics[width=0.24\textwidth]{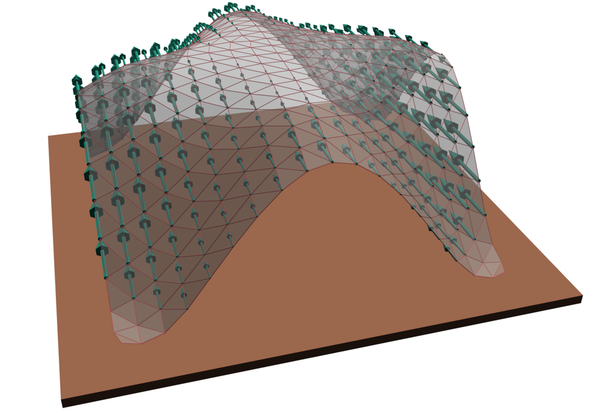}
	\includegraphics[width=0.24\textwidth]{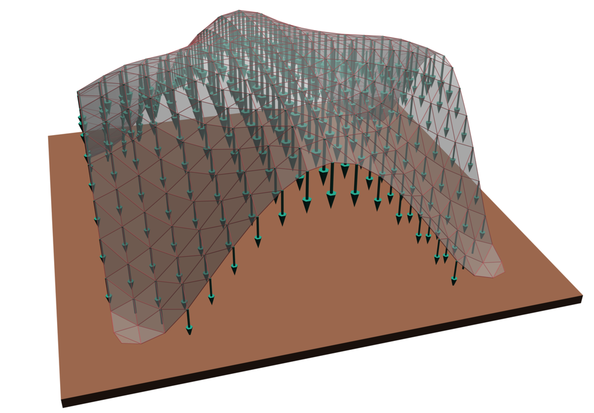}
	\caption{The first panel shows the geometry of the roof structure, with the tracking set on the roof plateau marked with dots. The Dirichlet nodes are the vertices on the horizontal plane at the corners. The other three panels show the three basis force fields $B_1$ (horizontal wind in the $X$ direction), $B_2$ (horizontal wind in the  $Y$ direction) and $B_3$ (vertical gravitational force caused by an overlay on the roof). The scale of the force arrows is arbitrary.
	} \label{fig:setup}
\end{figure}
Second, we consider a vertical force,  which could emulate the weight of snow or water overlay on the roof.  
The magnitude of the corresponding basis vector \(B_3\) on each vertex is the absolute value of the scalar product between 
the vertex normal and the $Z$-axis and is shown in Figure \ref{fig:setup} on the far right.
The magnitude of gravitational load is controlled by the scale factor $F_3$, we ensure that it is pointing downward via \(\mathcal{Q}_2^F(F) \coloneqq F_3 \) and limit its magnitude via \(\mathcal{Q}_3^F(F) \coloneqq F_{\mathrm{max},z} - F_3\), where $F_{\mathrm{max},z}$ is the maximal magnitude of the gravitational force.
Therefore the admissible set \(\mathcal{F}\) is a cylinder with radius $F_{\mathrm{max},xy}$ and height $F_{\mathrm{max},z}$.

We performed most of our investigations on the simple roof geometry shown in Figure~\ref{fig:setup}.
For this problem, the basic parameters, which are used in the examples if not indicated otherwise,  are as follows.
The roof geometry is almost filling a box of \(20  \times 20 \times 10\), the maximal horizontal load is
\(F_{\mathrm{max},xy} = 0.0015\) and the vertical one \(F_{\mathrm{max},z} = 2 F_{\mathrm{max},xy}\).
The elementwise bounds on the material thickness are
\(\mat^- = 0.01\) and \(\mat^+ = 0.2\). 
The volume of the material is bounded by \(V^+ = 60\) and the strength of the stochastic variation is fixed by \(\sigma = 0.1\).
The weights of the barrier terms were \(\alpha^F = 10^{-4}\), \(\alpha^u = 1\), and \(\alpha^V = 10^{-5}\).
For the leader, we consider a tracking set restricted to the central region of the roof plateau as shown in the first panel of Figure~\ref{fig:setup}. 
\begin{figure}[t]
	\centering
	\includegraphics[width=0.45\textwidth]{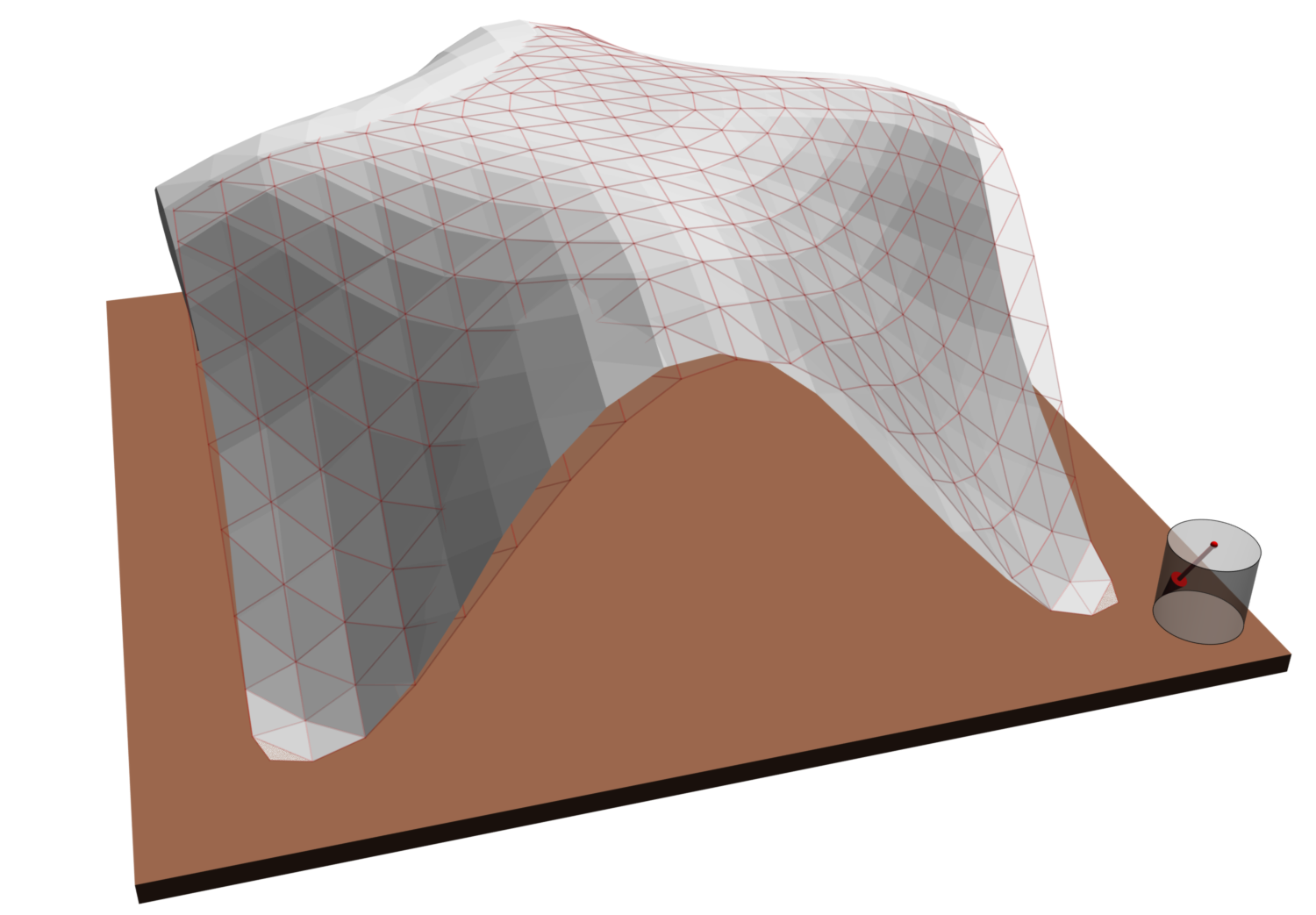}
	\includegraphics[width=0.24\textwidth]{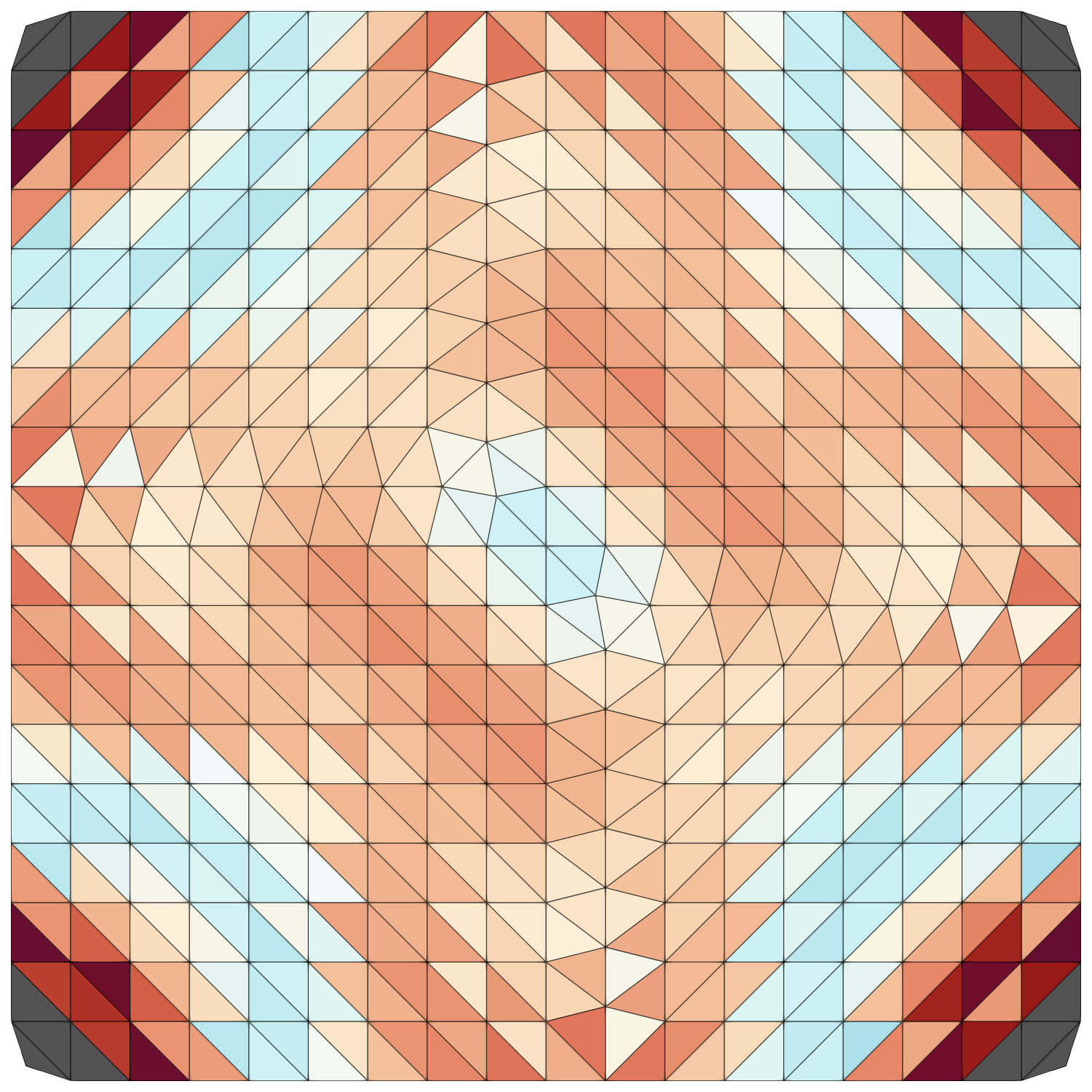}
	\includegraphics[width=0.24\textwidth]{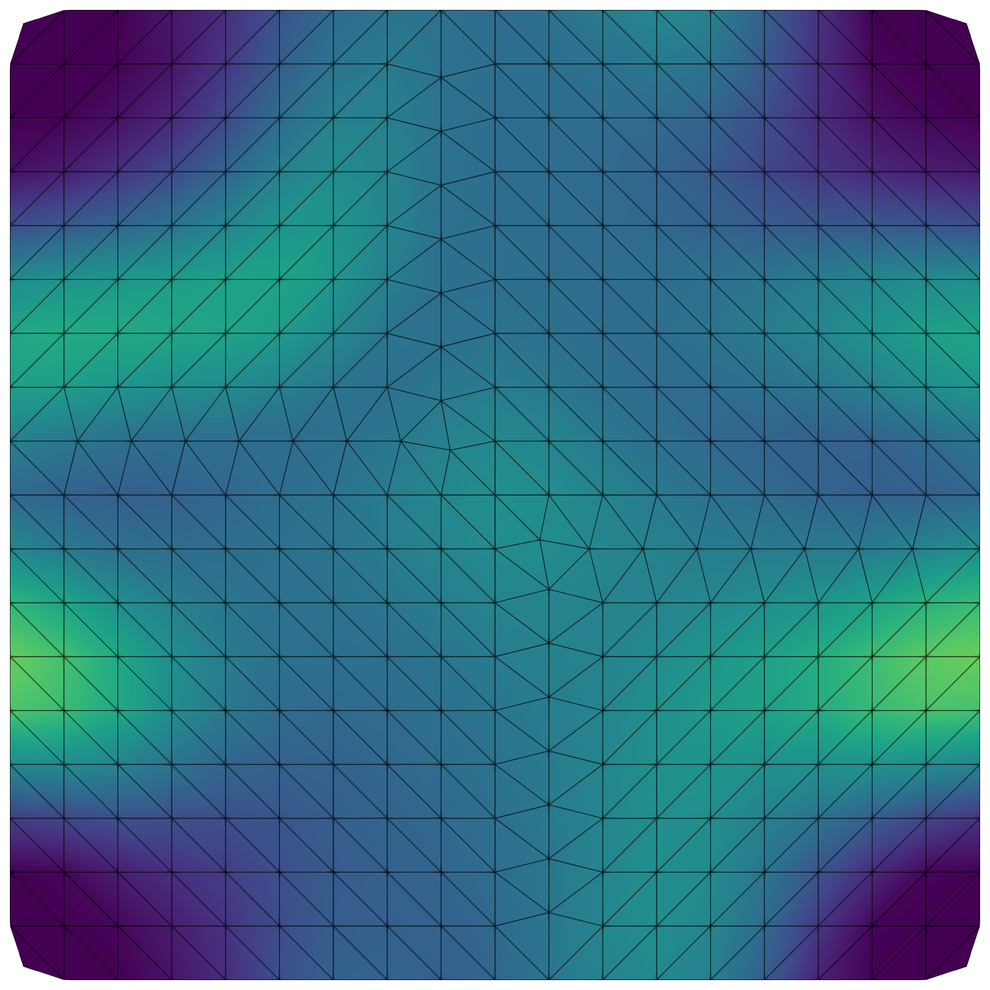}
	\includegraphics[width=0.05\textwidth]{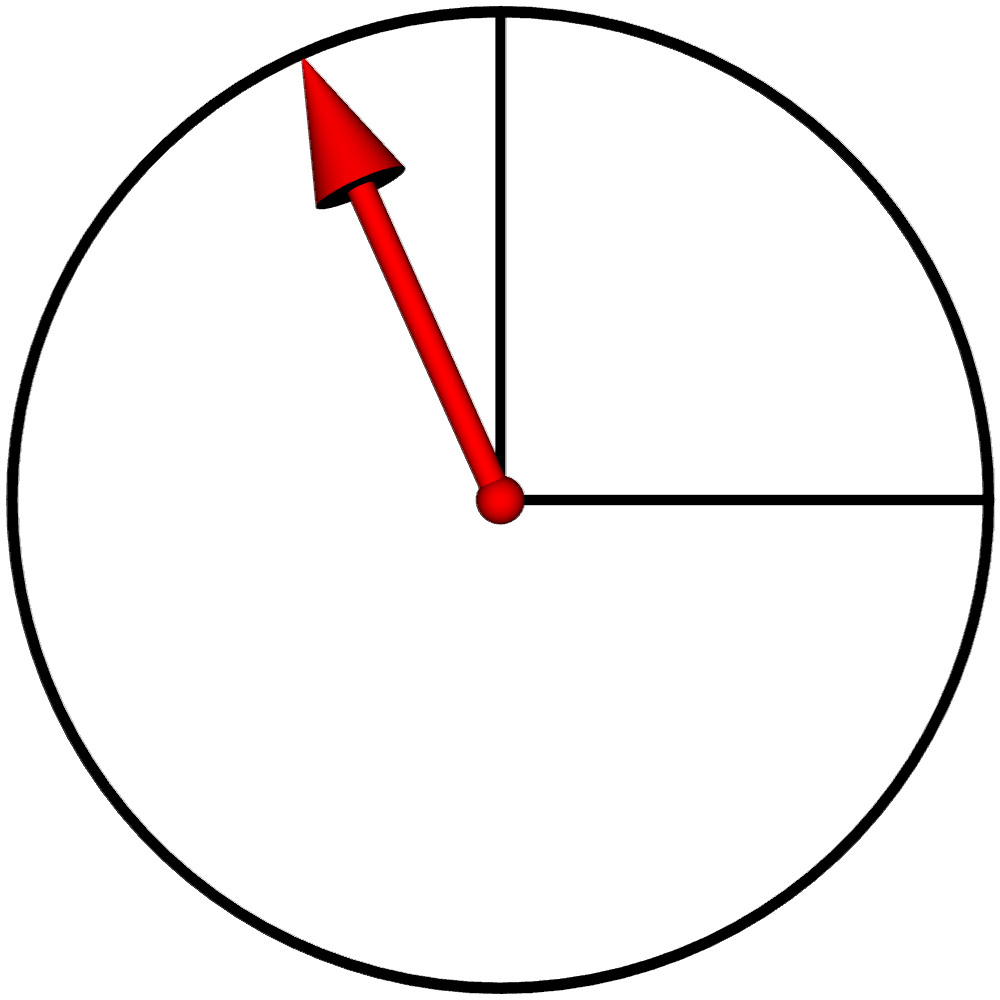}
	\includegraphics[width=0.45\textwidth]{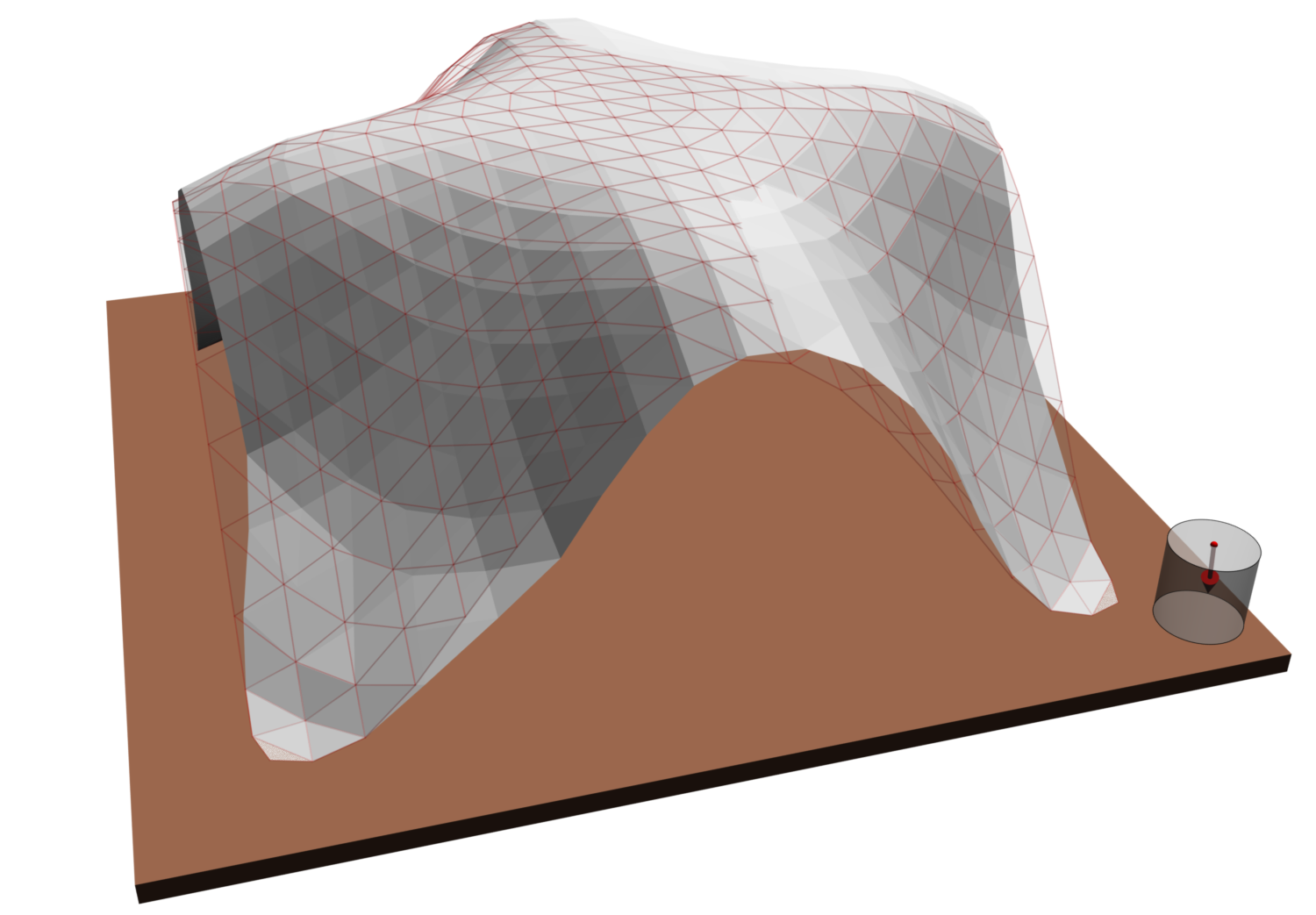}
	\includegraphics[width=0.24\textwidth]{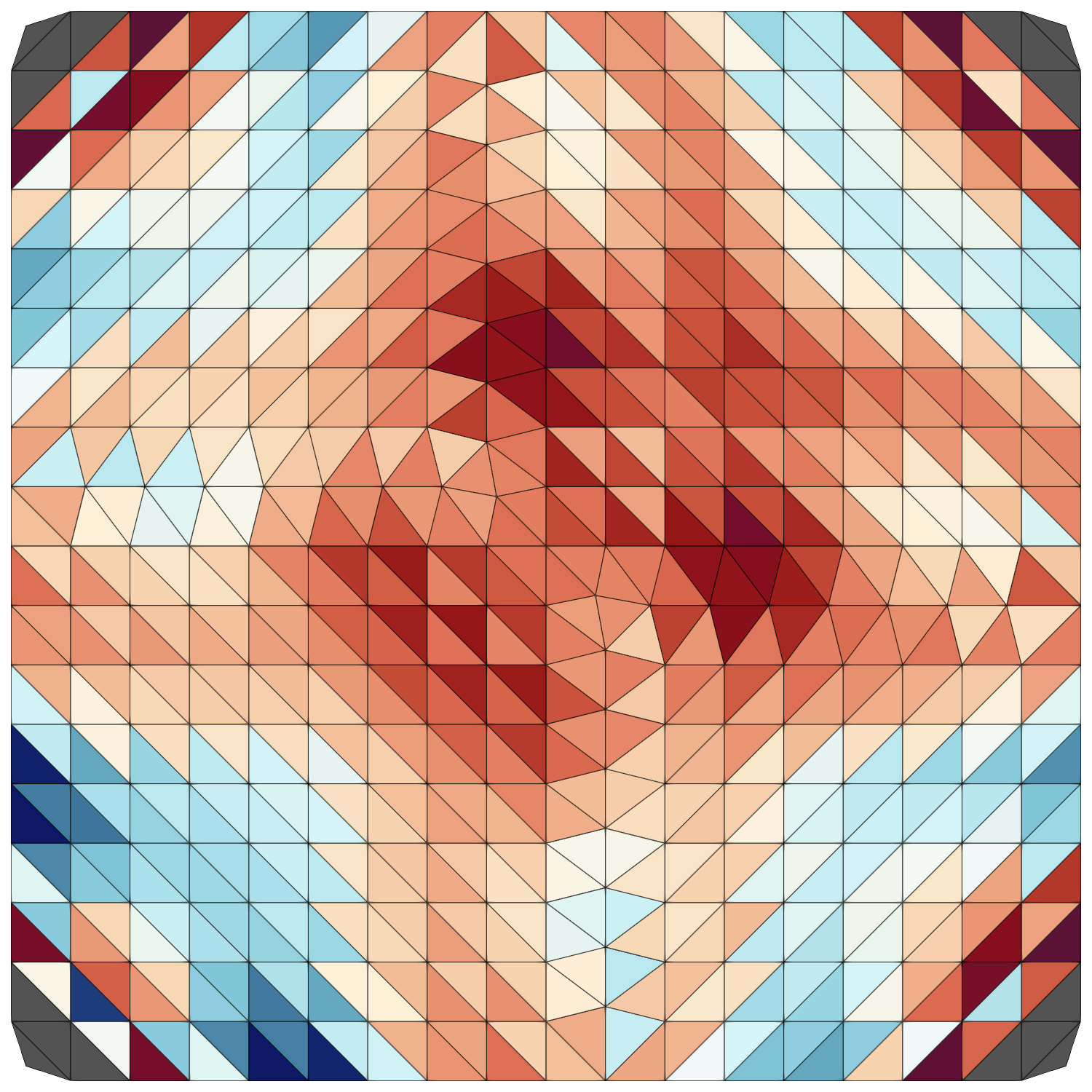}
	\includegraphics[width=0.24\textwidth]{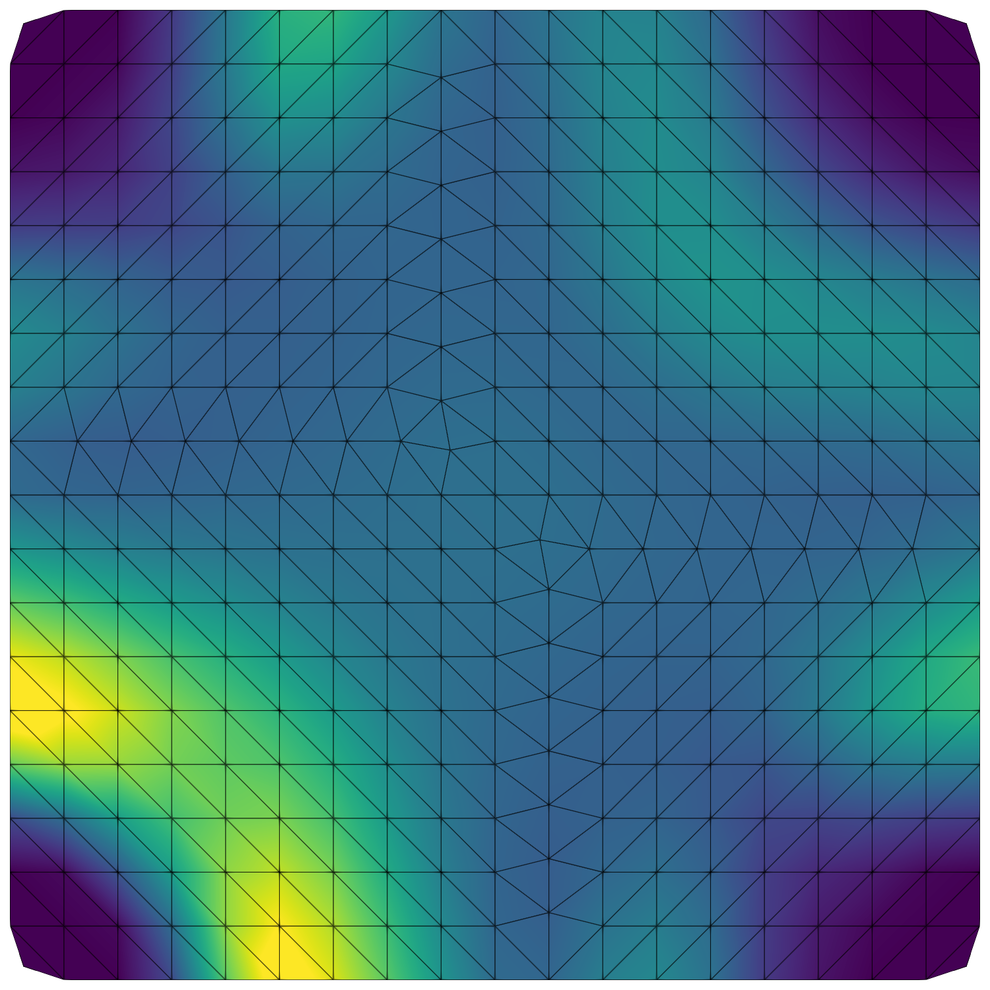}
	\includegraphics[width=0.05\textwidth]{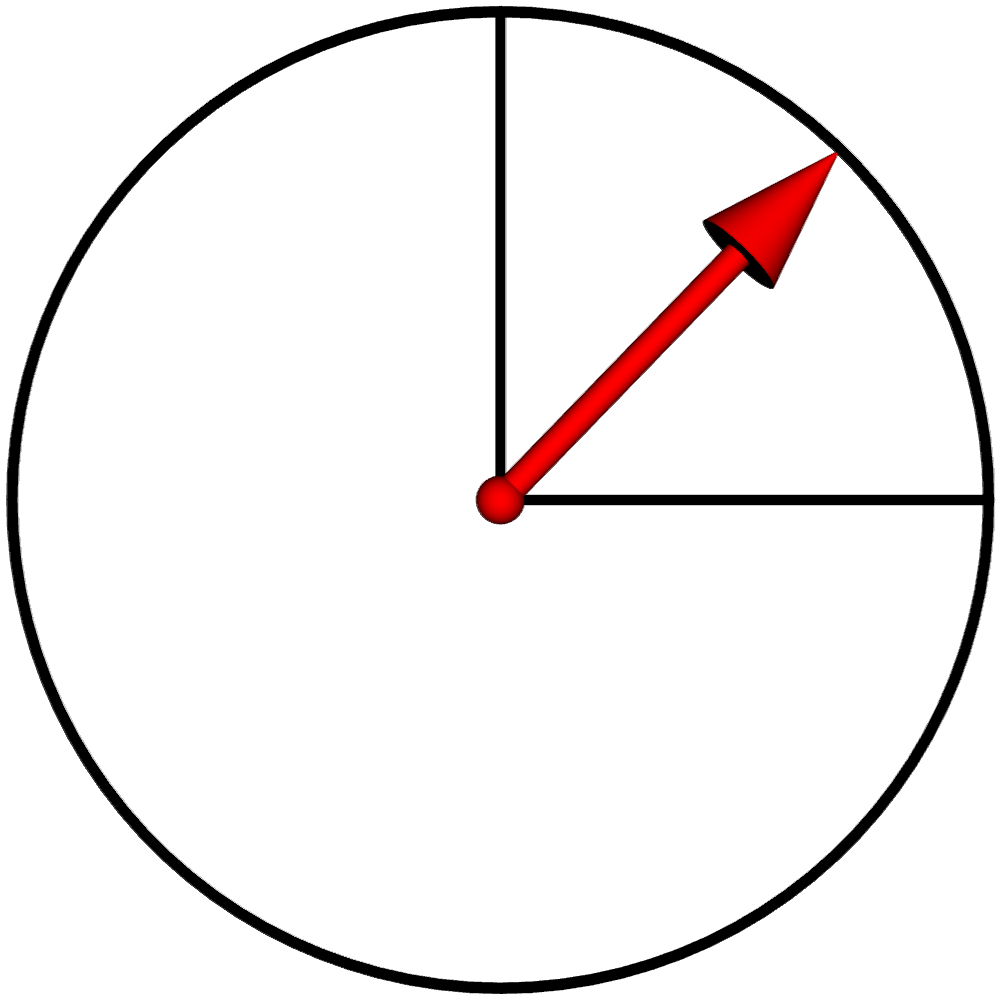}
	\caption{Comparison of results for full vertex tracking set (top) and plateau tracking set (bottom) on the simple roof-type geometry
        already shown in Figure~\ref{fig:setup}.
        On the left, we show the deformed configurations as gray surfaces, while the undeformed surfaces are shown as translucent surfaces overlayed with red edges. 
        Next to the surfaces, we visualize the direction of the force $(F_1,F_2,F_3)$ chosen by the follower in the cylinder of admissible values. 
        In the middle, we show the resulting material distributions with color map $0\hspace{1mm}$\protect\resizebox{.08\linewidth}{!}{\protect\includegraphics{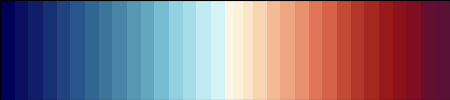}}$\hspace{1mm}0.2$, where boundary triangles with all three vertices subject to Dirichlet boundary conditions are shown in gray.
        On the right, we show the magnitude of the deformation \(\y\) using the color map  $0\hspace{1mm}$\protect\resizebox{.08\linewidth}{!}{\protect\includegraphics{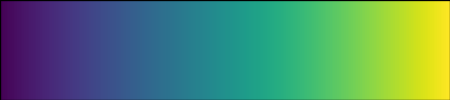}}$\hspace{1mm}\geq1.5$.
        Additionally, on the far right, we show the direction of the horizontal forces $(F_1,F_2)$.
	} \label{fig:toymodel}
\end{figure}

In Figure~\ref{fig:toymodel}, we show the deformed configuration, the optimized distribution of the material thickness, and the magnitude of displacements in case of the leader minimizing a tracking functional once with global support (top row) using \(\chi\equiv 1\) and once restricted to the region of the roof plateau (bottom row). 
As for all examples presented here, in the follower problem, the maximal compliance is attained for a force $F$ representing an extremal point of the cylinder of admissible forces.
For the tracking cost domain centered on the roof plateau, one observes a concentration of mass in the central region accompanied by a significant reduction of the thickness close to the four corners where Dirichlet boundary conditions apply. 
The concentration and corresponding reduction break the symmetry of the configuration w.r.t.\ the diagonal from the upper left to the lower right.
Due to the asymmetric reduction, the follower chooses a force pointing to the upper right and one observes a kink line connecting the two arcs in the front at approximately half of the total height.
This is accompanied by large displacements, which are however outside of the tracking region on the plateau.
In contrast, for the tracking with global support, no such kink with strong displacements occurs, however, the deformation exhibits a larger  displacement in the central region.
Finally, beyond the mass concentration in the middle, one also observes the onset of curved ``beam'' like structures connecting the middle region and the four arcs of the roof.
In the example with localized tracking, and most of the following ones, the elementwise bounds \(\mat^+\) and \(\mat^-\) are nearly attained for at least some triangles. 

Figure~\ref{fig:massvariation} shows for the same geometry the impact of the upper bound on the total material volume. 
\begin{figure}[t]
	\centering
	\includegraphics[width=0.17\textwidth]{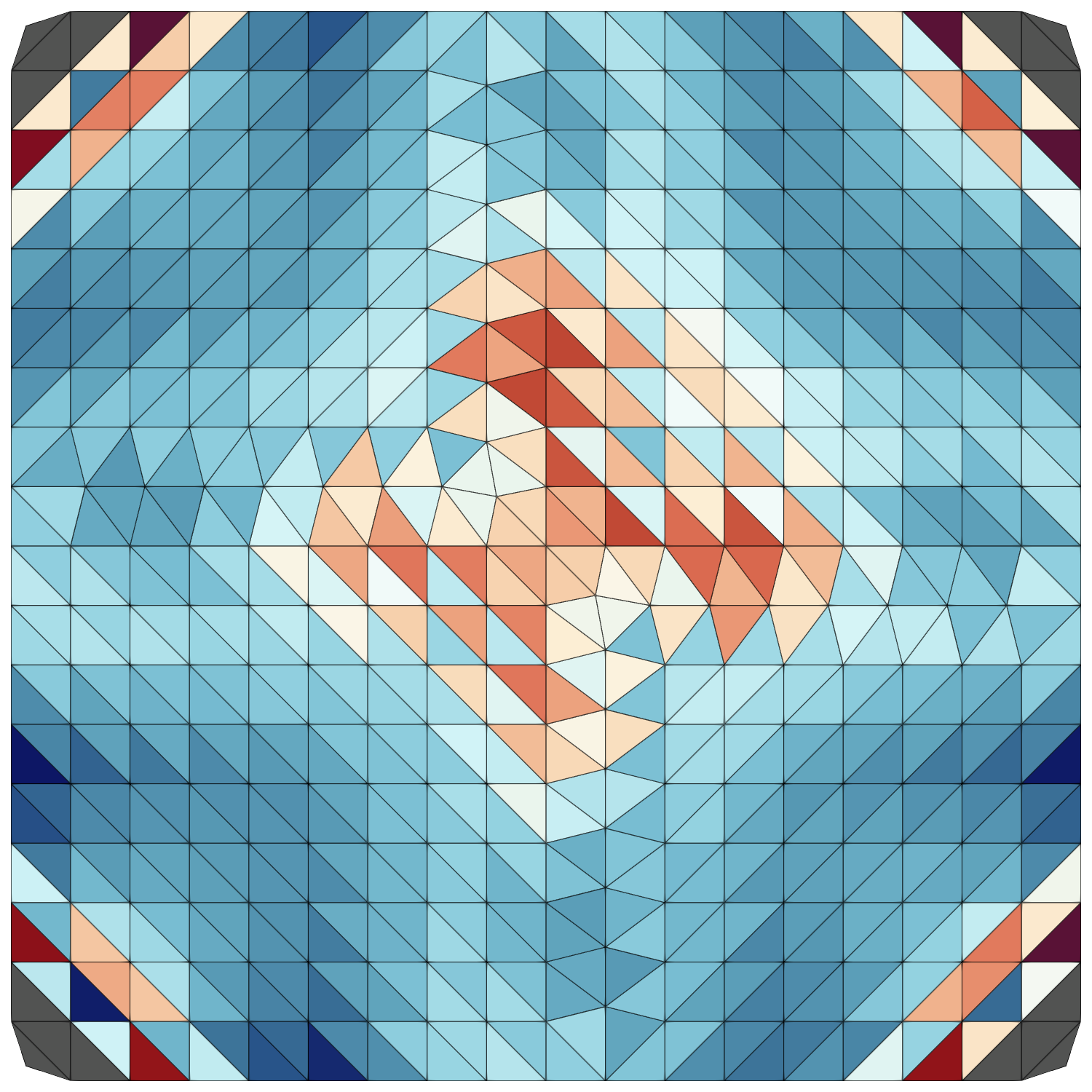}\hfill
	\includegraphics[width=0.17\textwidth]{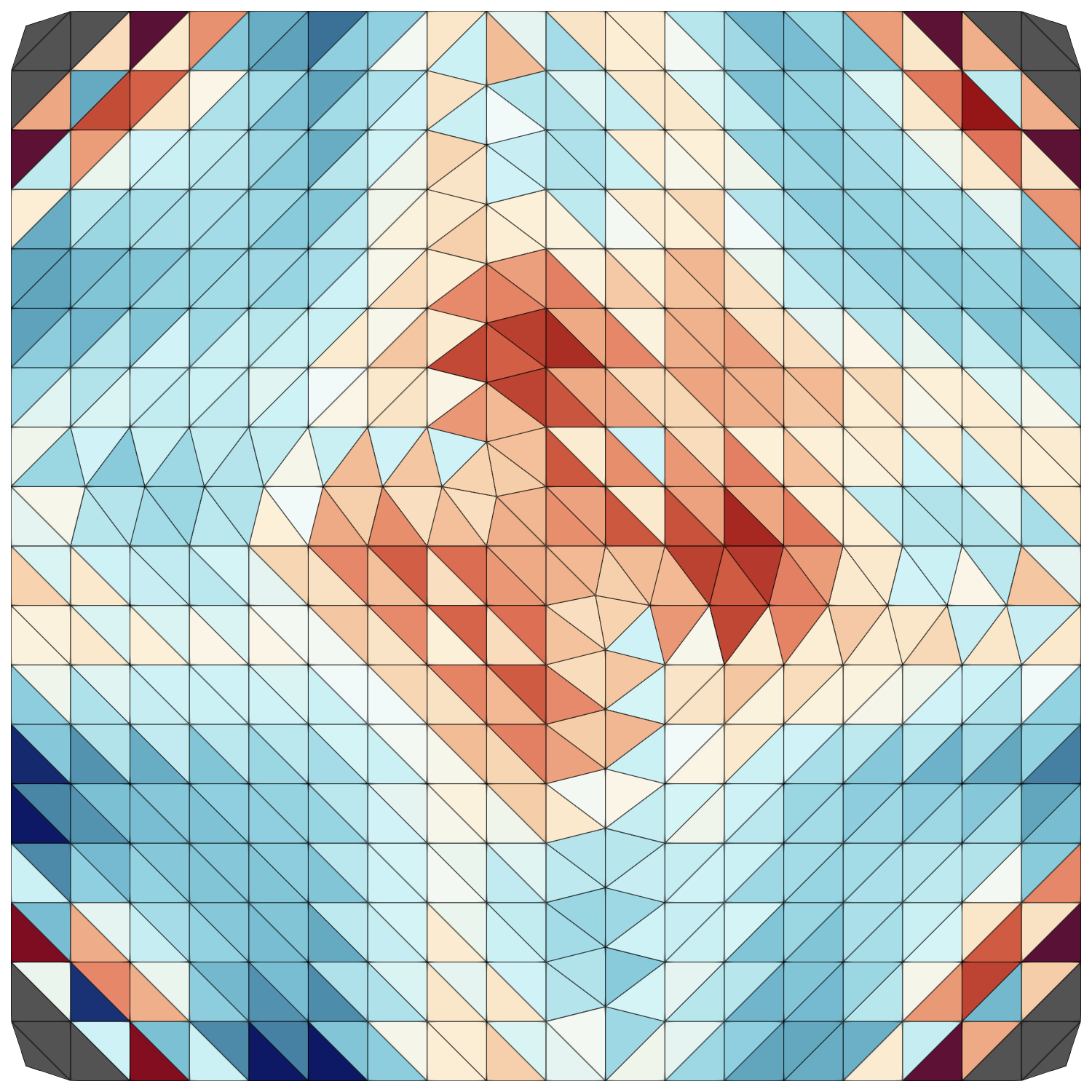}\hfill
	\includegraphics[width=0.17\textwidth]{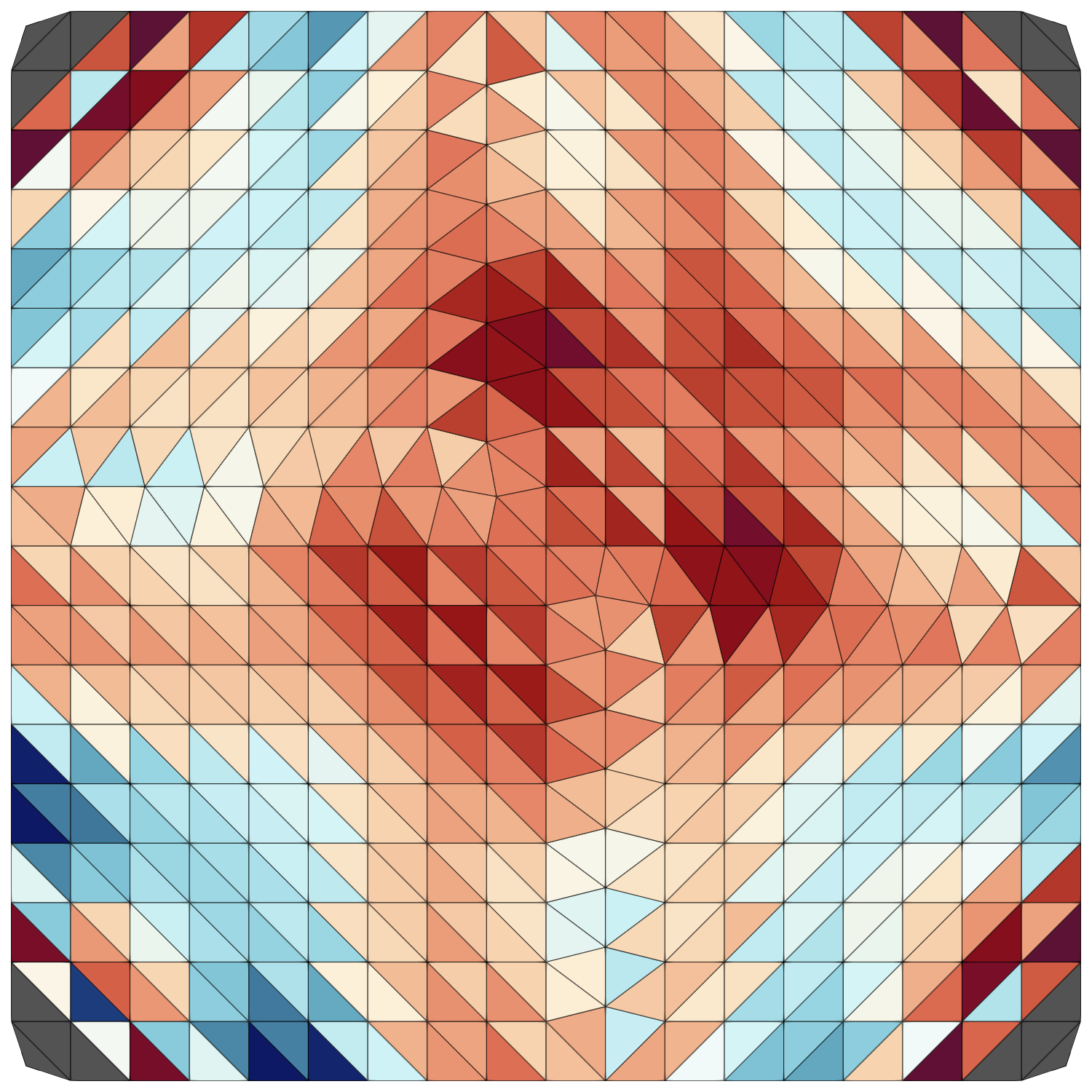}\hfill
	\includegraphics[width=0.17\textwidth]{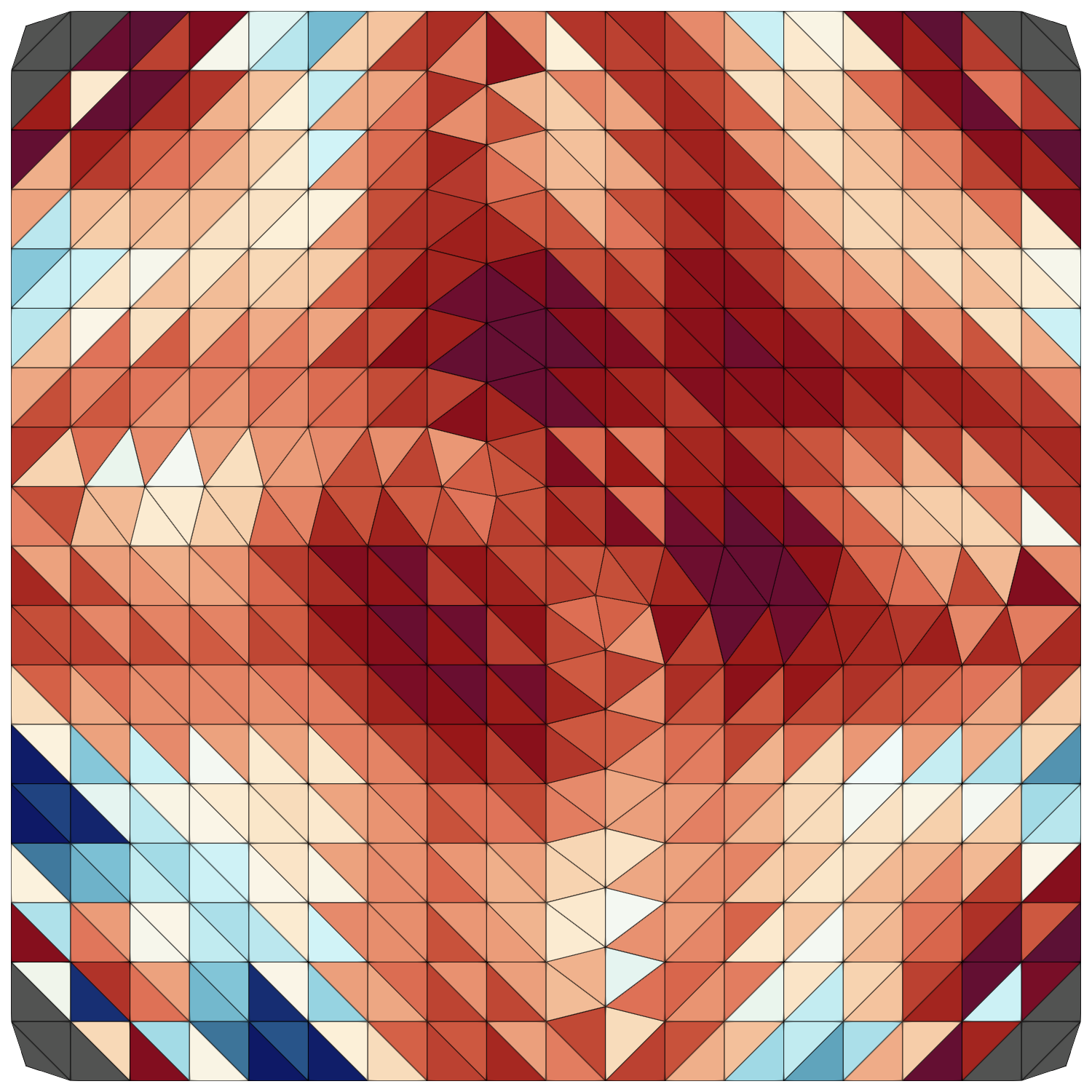}\hfill
	\includegraphics[width=0.17\textwidth]{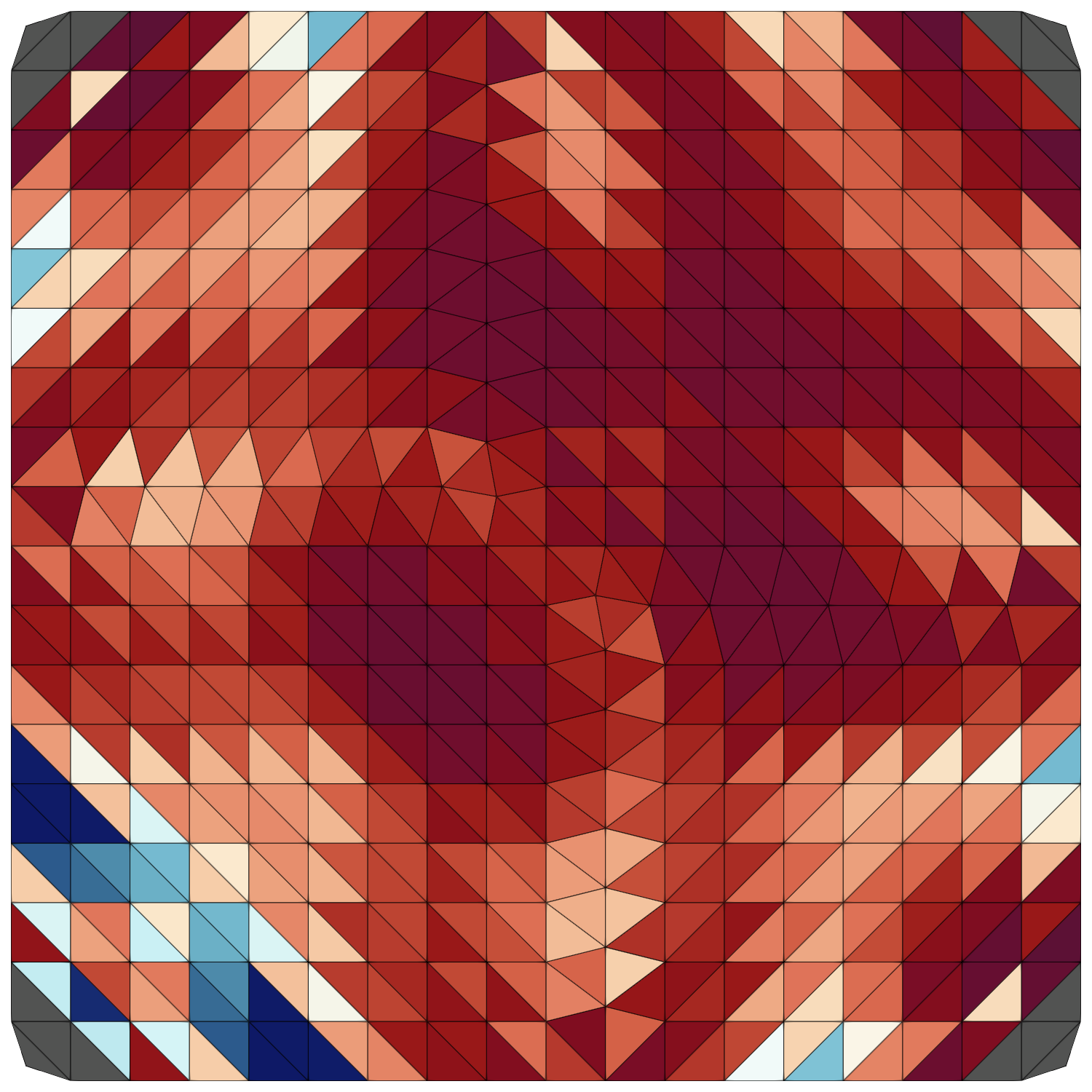}
	\includegraphics[width=0.05\textwidth]{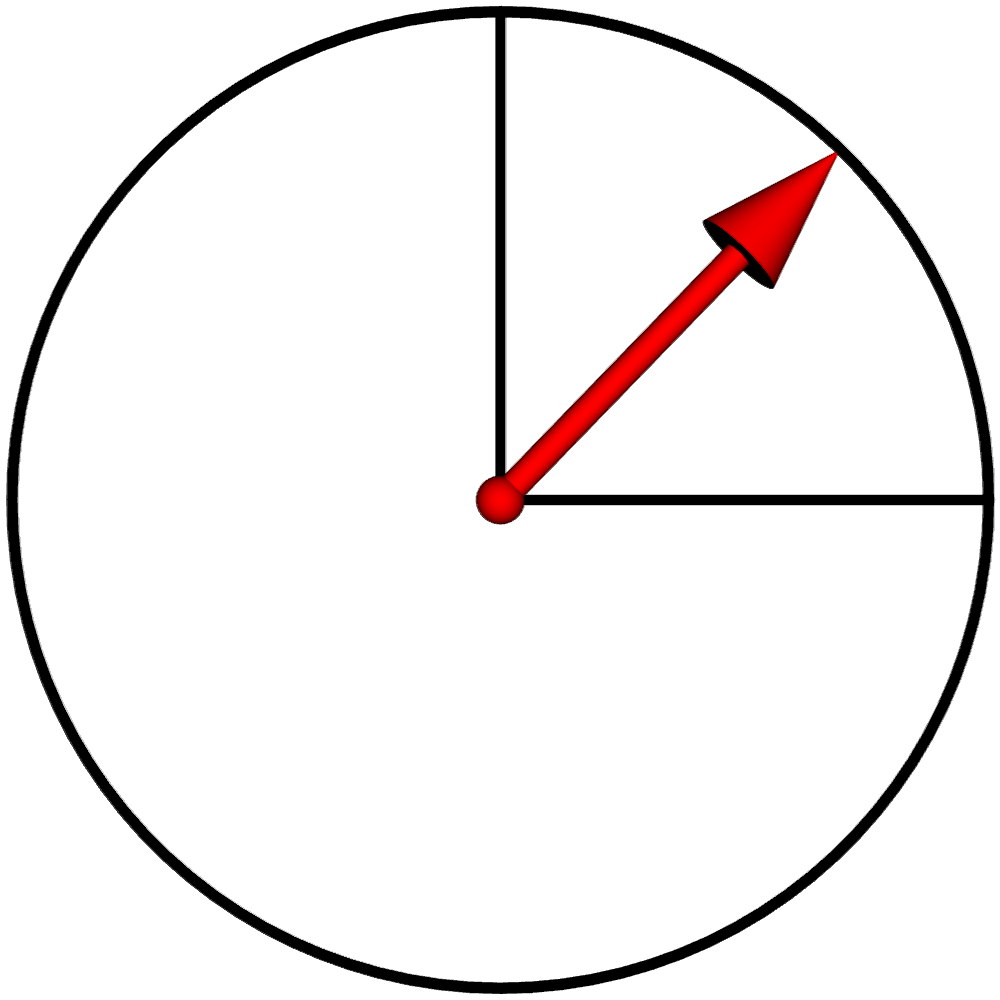}
	\caption{A comparison of the material distribution when varying the maximal allowed material volume \(V^+\) while keeping the other parameters fixed. The allowed volume was $V^+= 40,50,60,70,80$ from left to right.
		Material thickness is shown using the color map $0\hspace{1mm}$\protect\resizebox{.08\linewidth}{!}{\protect\includegraphics{figures/colorbar_cw.png}}$\hspace{1mm}0.2$. 
		On the far right, we show the direction of the horizontal forces, which was the same for all parameters, while the vertical force was always chosen maximal.
	} \label{fig:massvariation}
\end{figure}
As the total permitted mass is increased, the elongated curved ``beams'' connecting the tracking region in the center with the four arcs become thicker. 
Once the maximal thickness is reached in the central region and along these ``beams'', further mass is invested to reinforce the regions close to the Dirichlet boundaries.
The curved carrier ``beams'' and the central region are again designed asymmetrically w.r.t.\ the diagonal from the upper left to the lower right leading the follower to push towards the upper right.

We next investigate the effect of the parameters characterizing the strength of the forces, ${F_{\mathrm{max},z}}$ and ${F_{\mathrm{max},xy}}$, while keeping the total amount of material constant.
By scaling invariance, it is natural to focus on the ratio $\frac{F_{\mathrm{max},z}}{F_{\mathrm{max},xy}}$.
In Figure~\ref{fig:cylindervariation}, we show that with increasing strength of the vertical force, the ``beams'' become thinner and instead more material is concentrated in the central region.
\begin{figure}[t]
	\centering
	\includegraphics[width=0.15\textwidth]{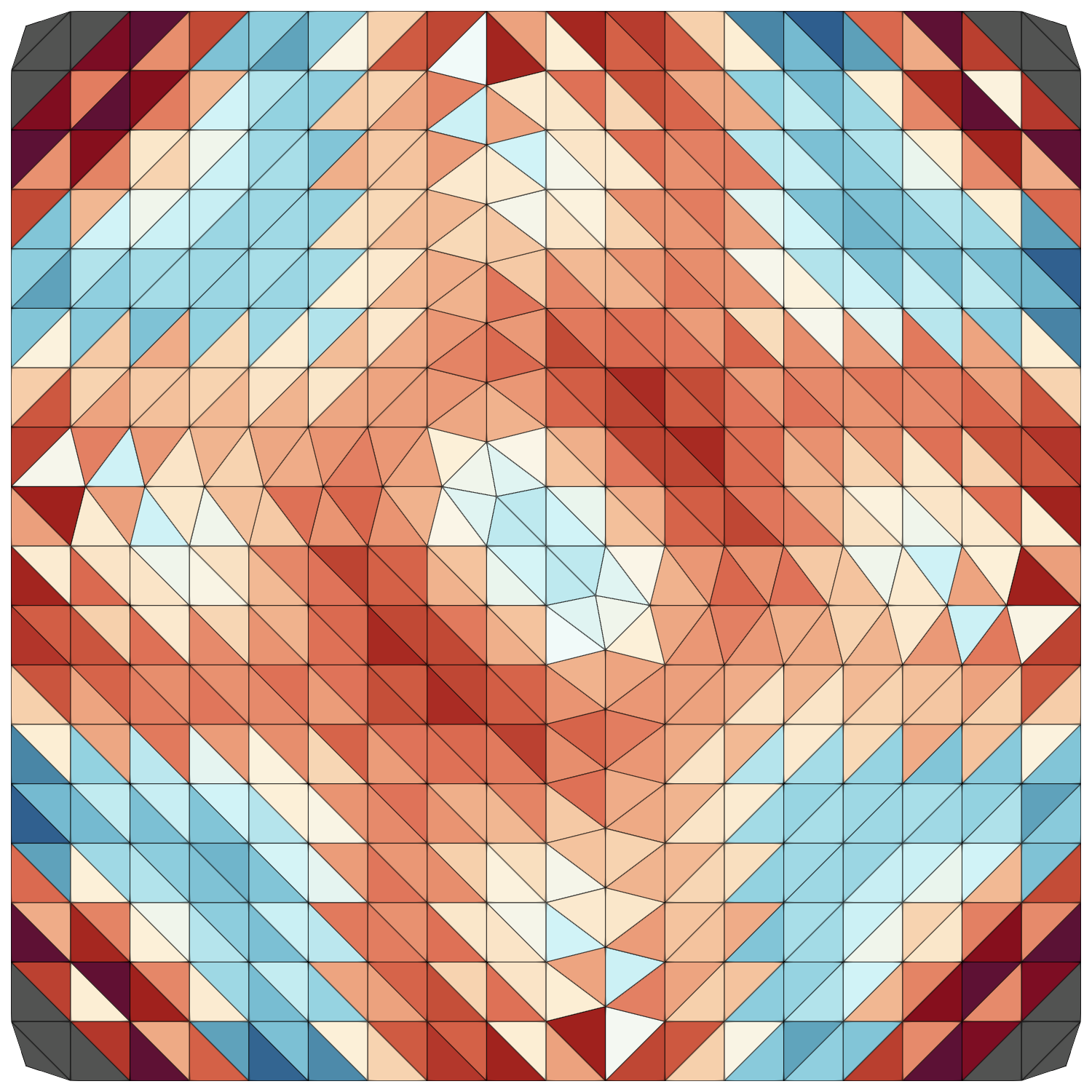}
	\includegraphics[width=0.03\textwidth,trim=100 0 100 0, clip]{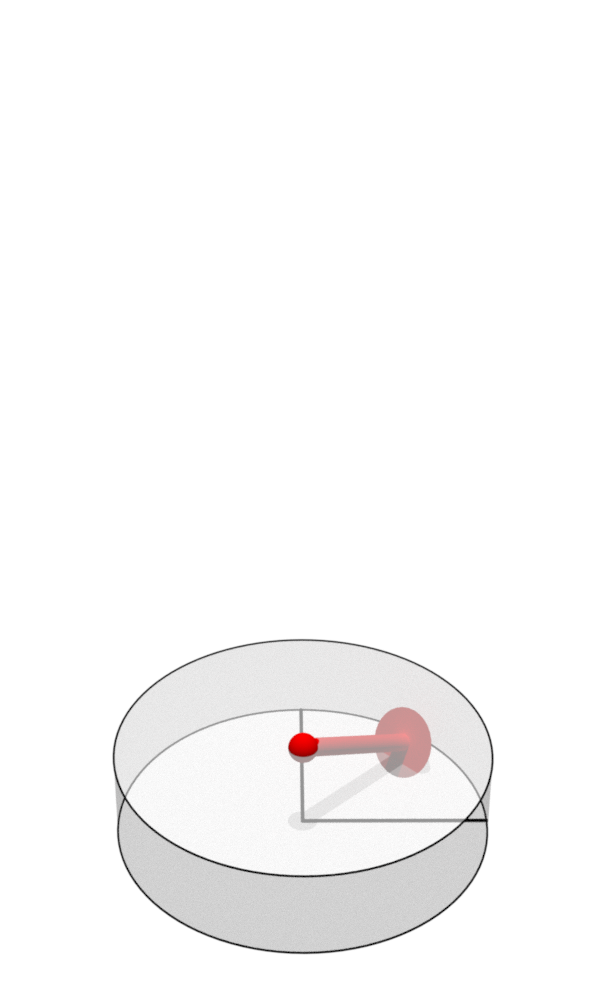}
	\includegraphics[width=0.15\textwidth]{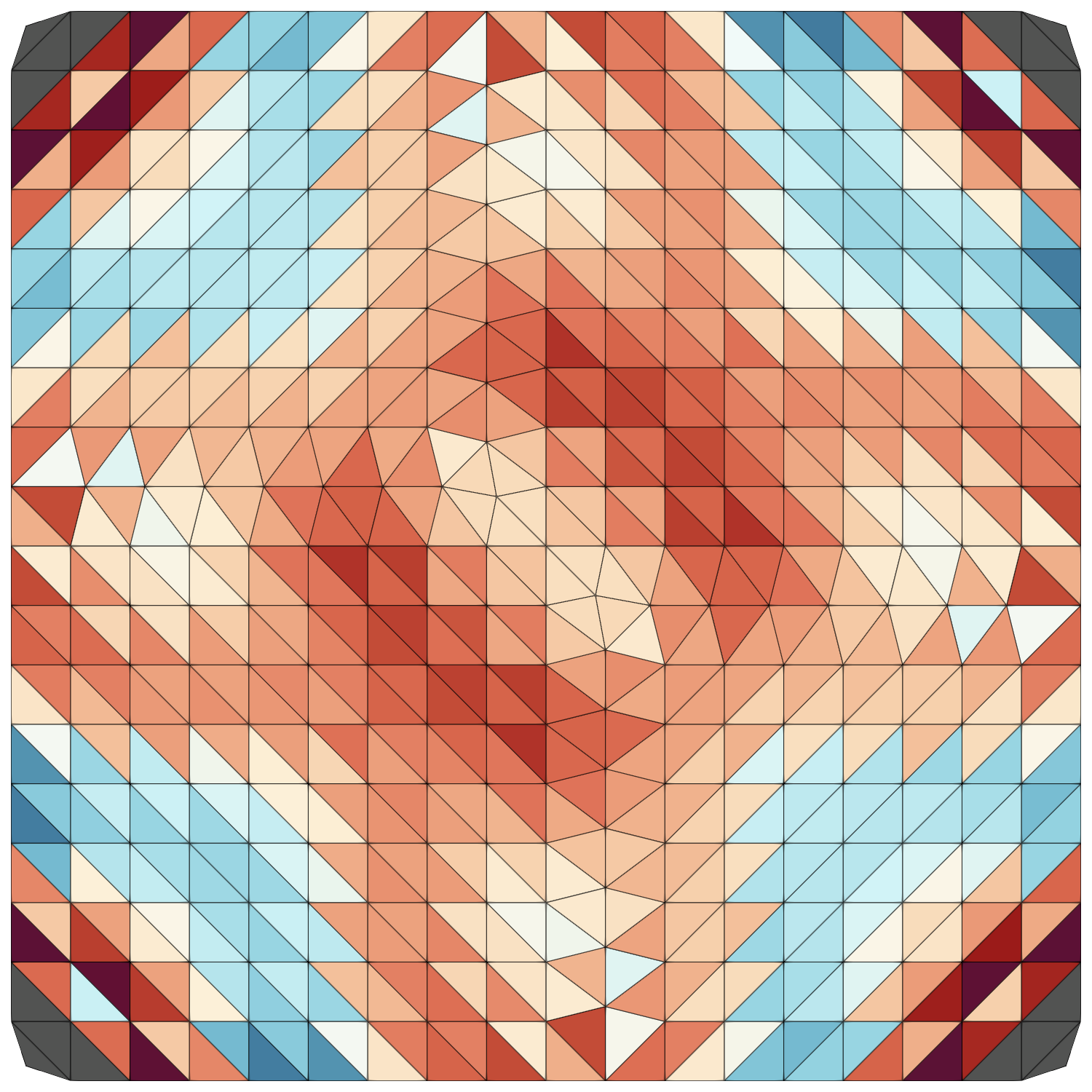}
	\includegraphics[width=0.03\textwidth,trim=100 0 95 0, clip]{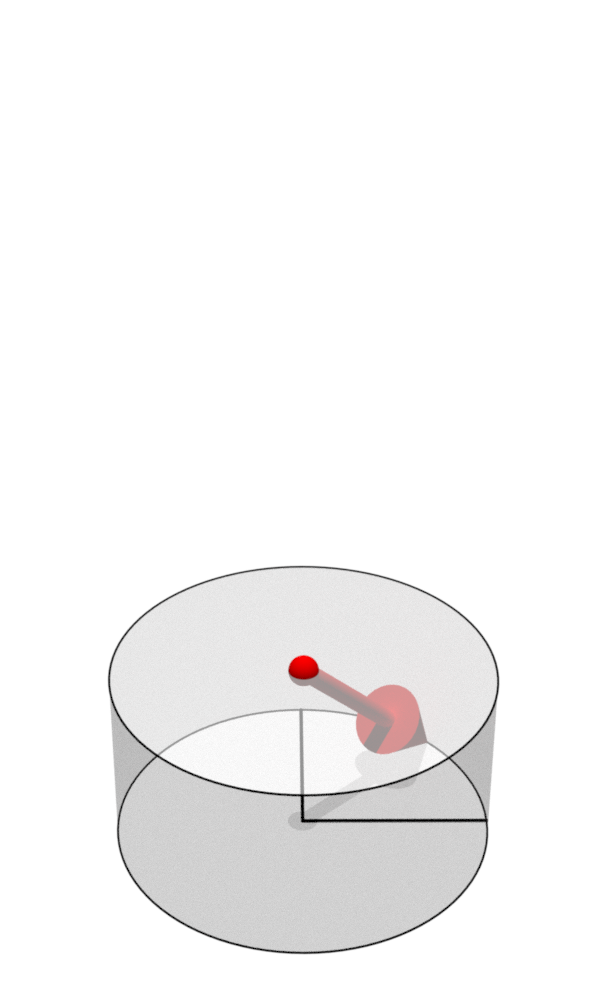}
	\includegraphics[width=0.15\textwidth]{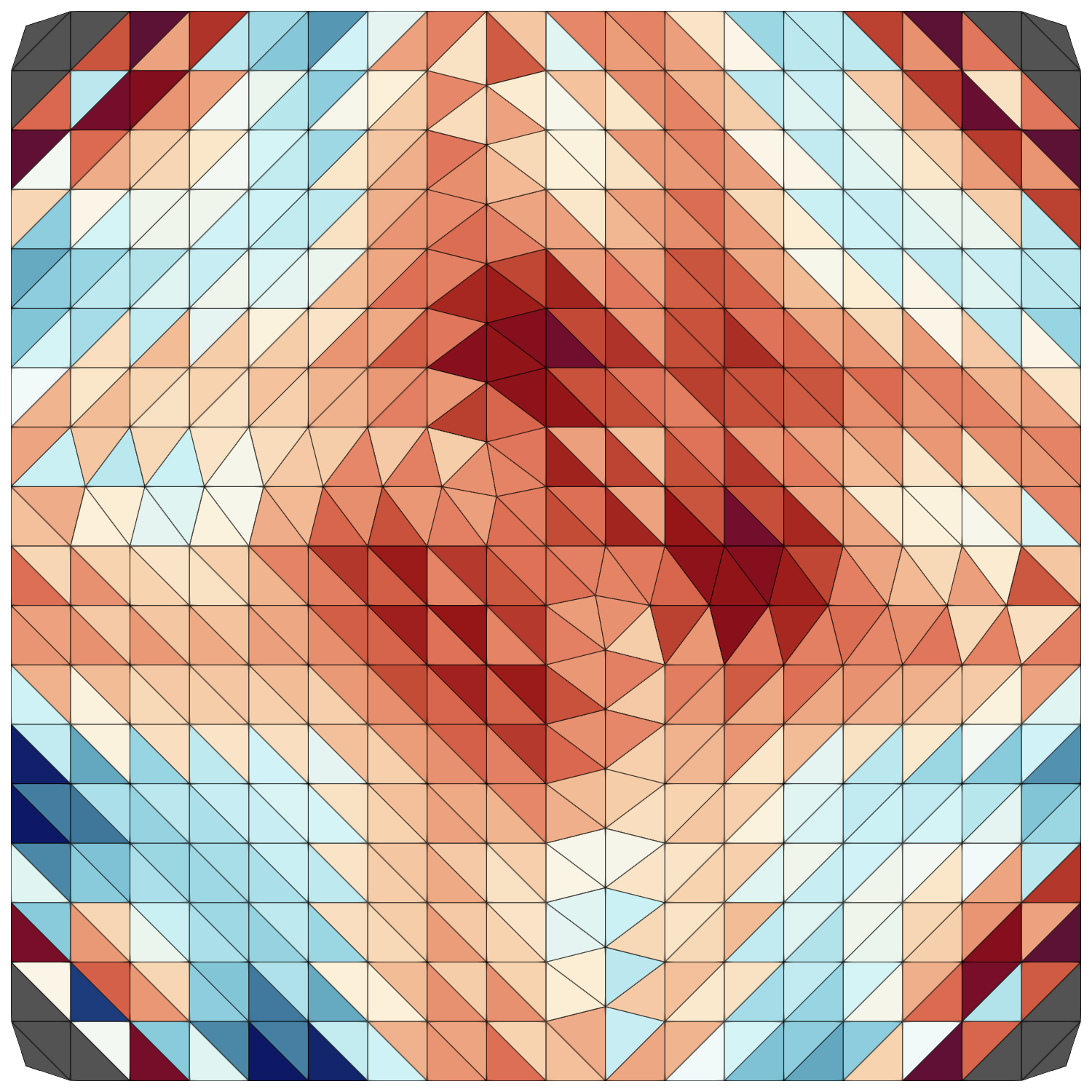}
	\includegraphics[width=0.03\textwidth,trim=85 0 80 0, clip]{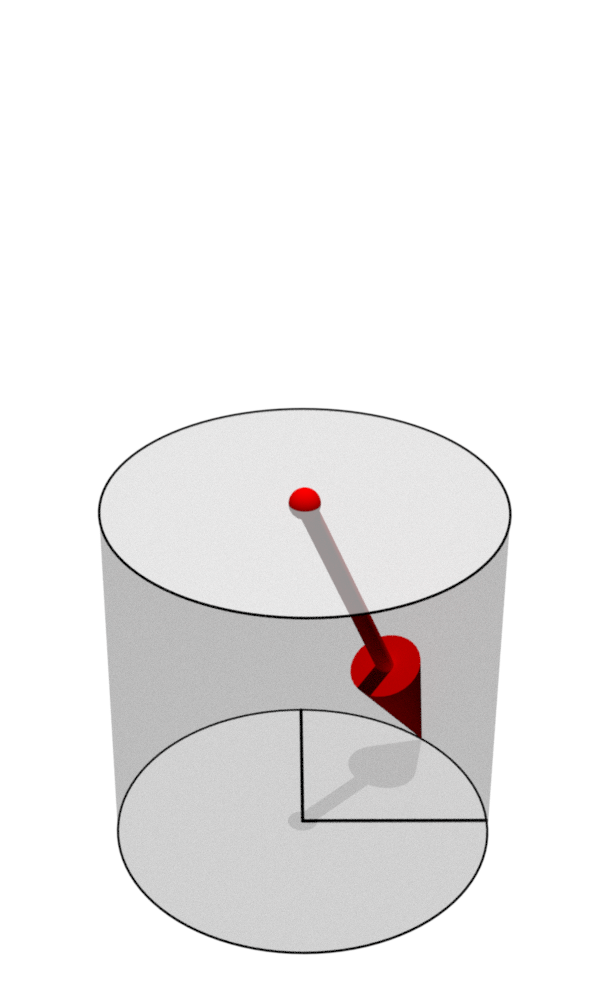}
	\includegraphics[width=0.15\textwidth]{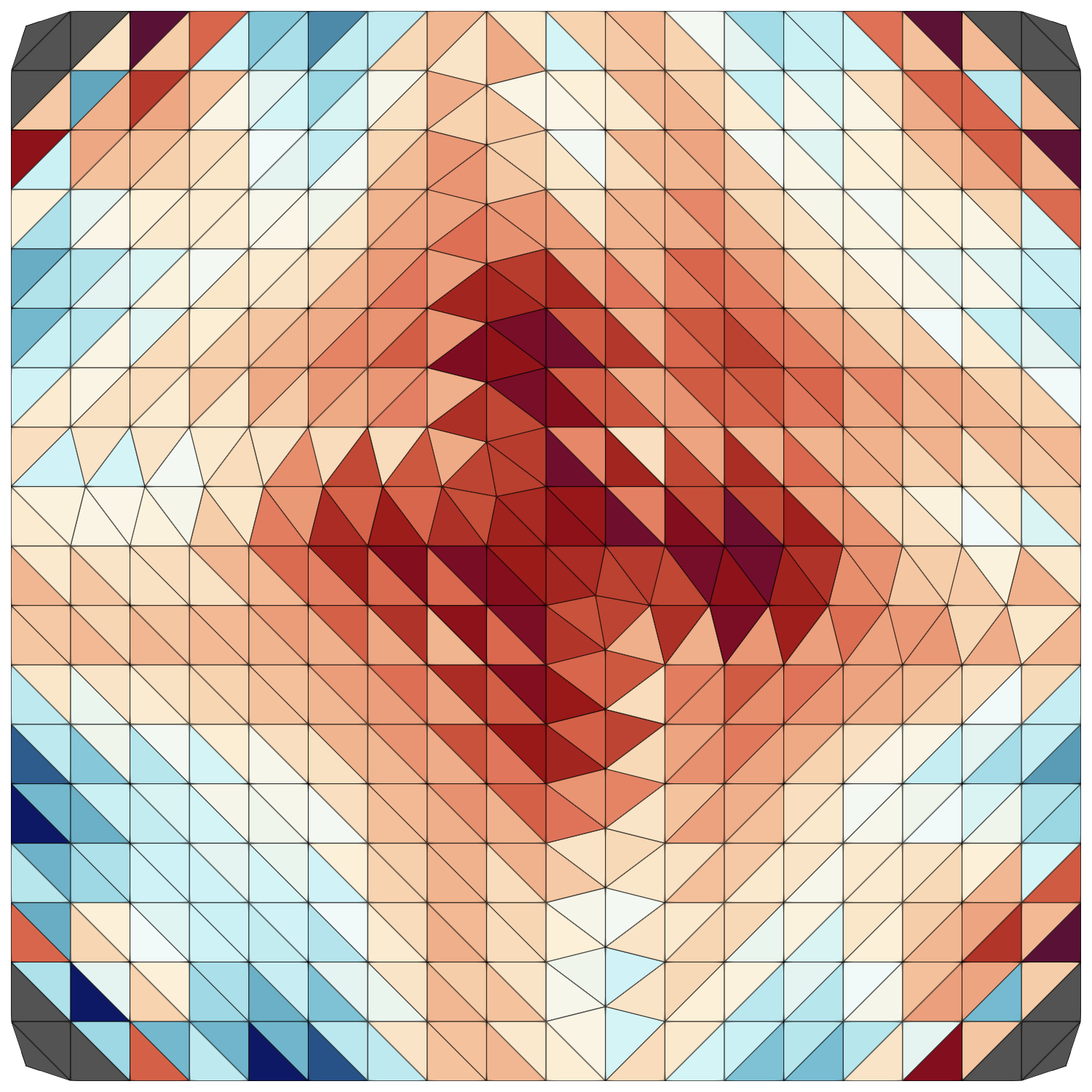}
	\includegraphics[width=0.03\textwidth,trim=70 0 60 0, clip]{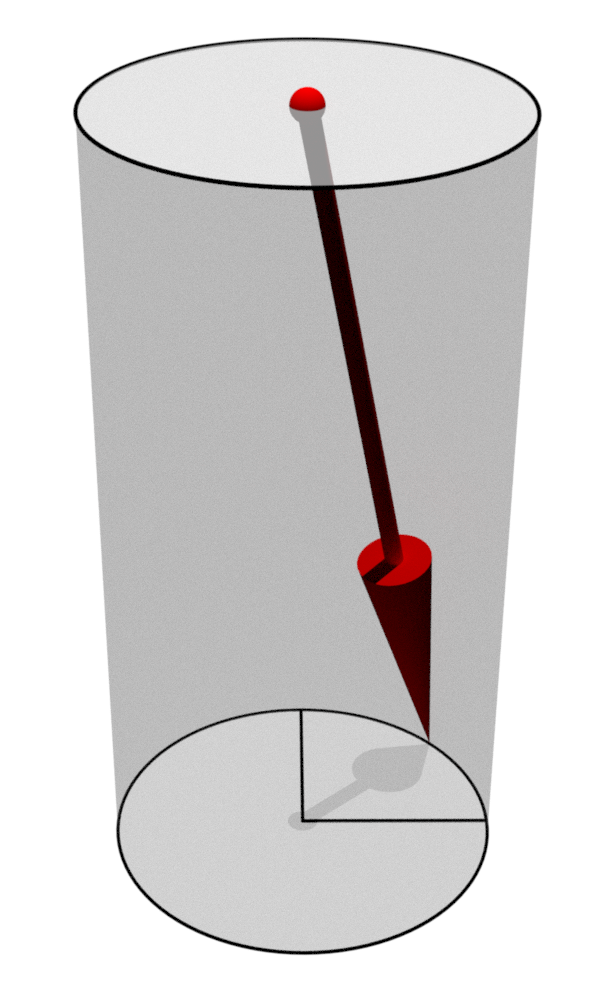}
	\includegraphics[width=0.15\textwidth]{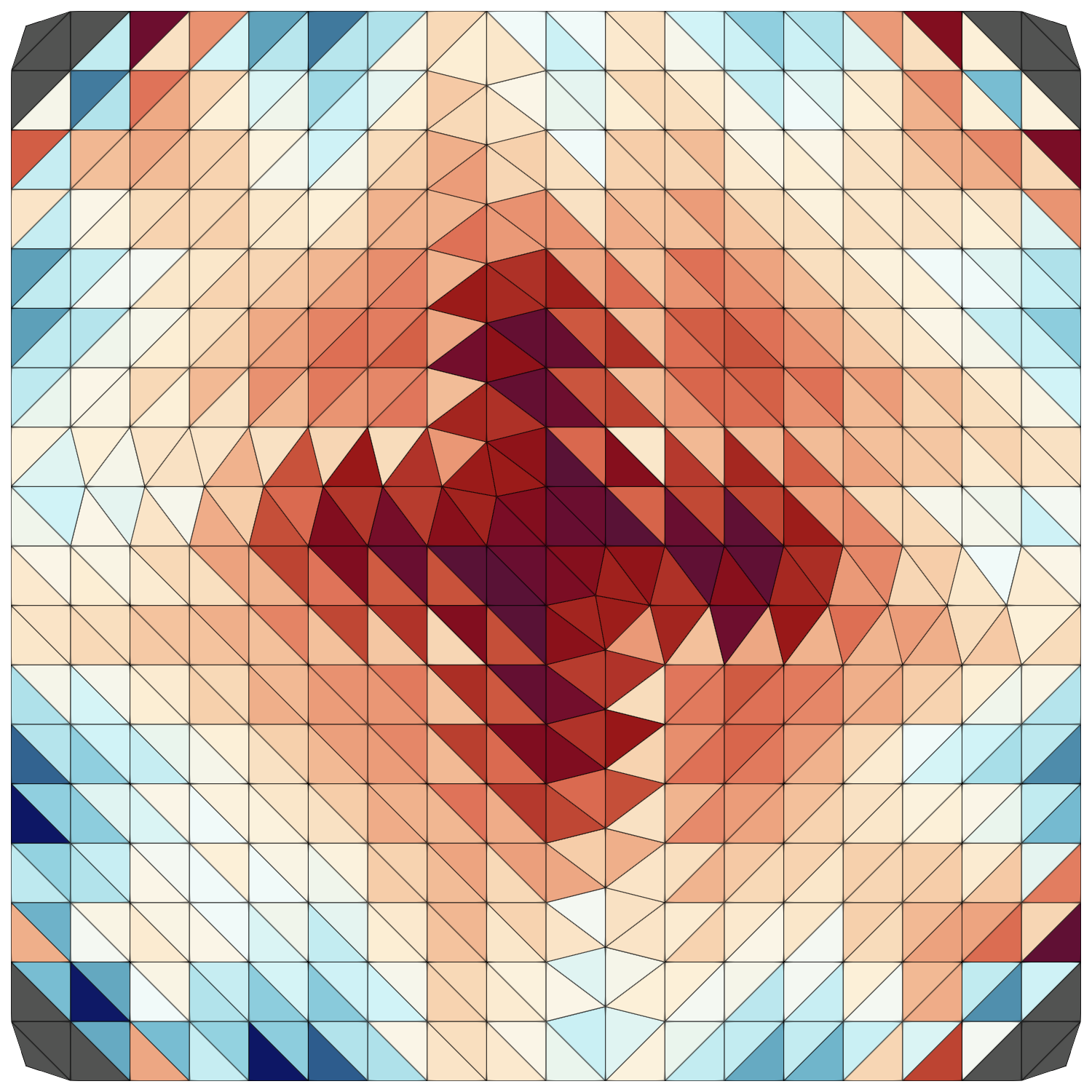}
	\includegraphics[width=0.05\textwidth,trim=160 100 160 0, clip]{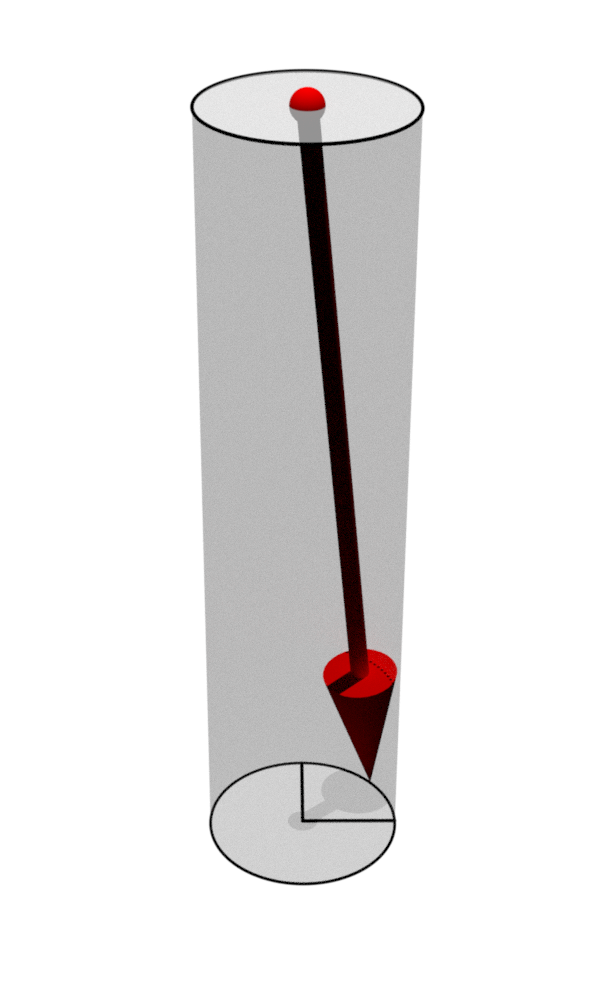}
	\caption{
		A comparison of the material distribution when varying the ratio of vertical to horizontal force $\frac{F_{\mathrm{max},z}}{F_{\mathrm{max},xy}}$, \ie the shape of the cylinder, while keeping the other parameters, especially the maximal magnitude of horizontal force, fixed. 
		The ratio of vertical to horizontal force was $\frac{F_{\mathrm{max},z}}{F_{\mathrm{max},xy}} = \frac{1}{2},1,2,4,8$ from left to right.
		The material thickness is shown using the color map $0\hspace{1mm}$\protect\resizebox{.08\linewidth}{!}{\protect\includegraphics{figures/colorbar_cw.png}}$\hspace{1mm}0.2$. 
On the right of each material distribution, we show the force
		in the cylinder of admissible values.
		} \label{fig:cylindervariation}
\end{figure}
Interestingly, for small values of the ratio between the two forces the material distribution is nearly symmetrical w.r.t.\ the diagonal from the upper left to the lower right, while it is asymmetric for mid-range ratios and then becomes more symmetric again for large ratios.

Figure~\ref{fig:deviation} shows the impact of the strength of the  stochastic perturbation of the material thickness, as measured by the standard deviation, again for the tracking region on the roof plateau. 
\begin{figure}[t]
	\centering
	\includegraphics[width=0.17\textwidth]{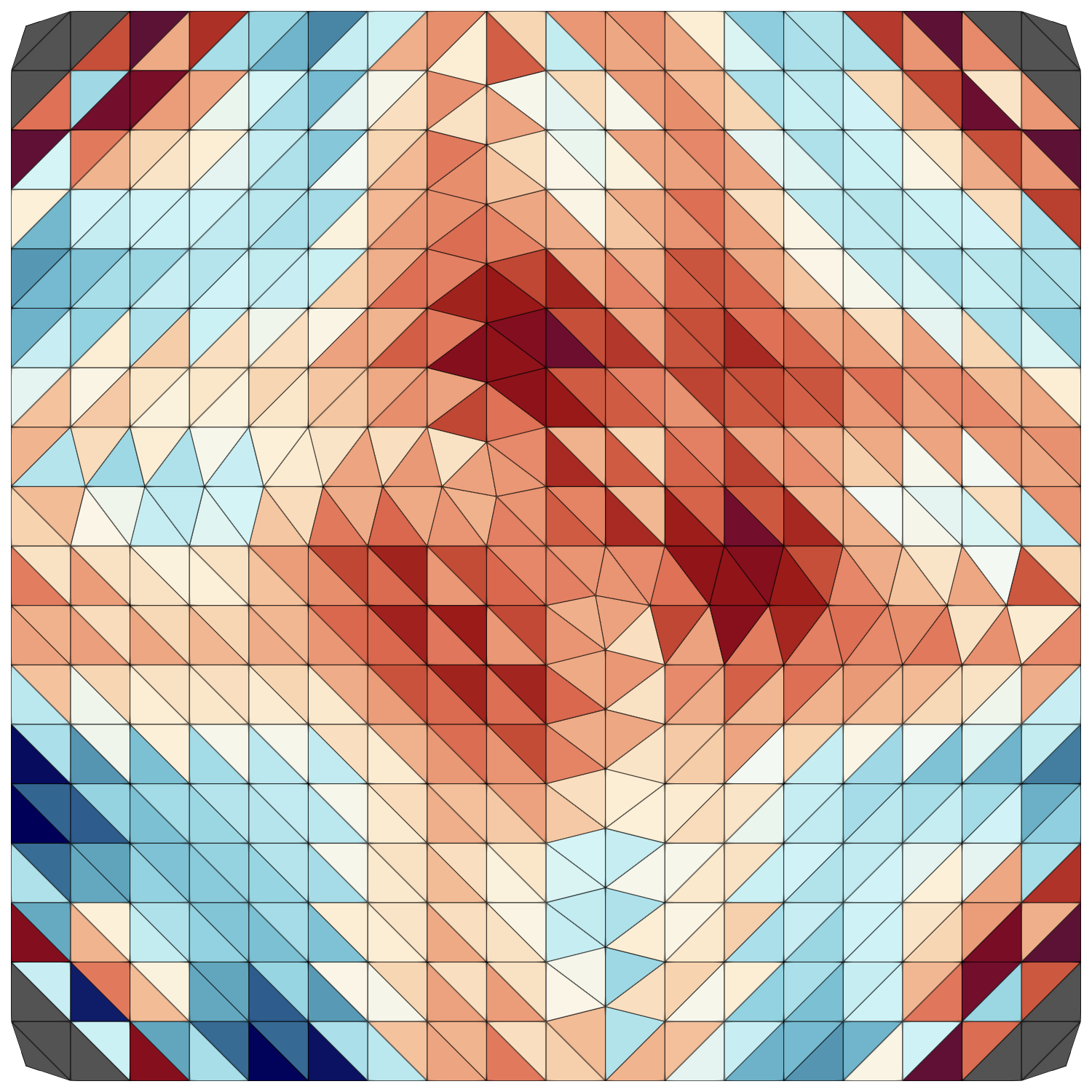}
	\includegraphics[width=0.17\textwidth]{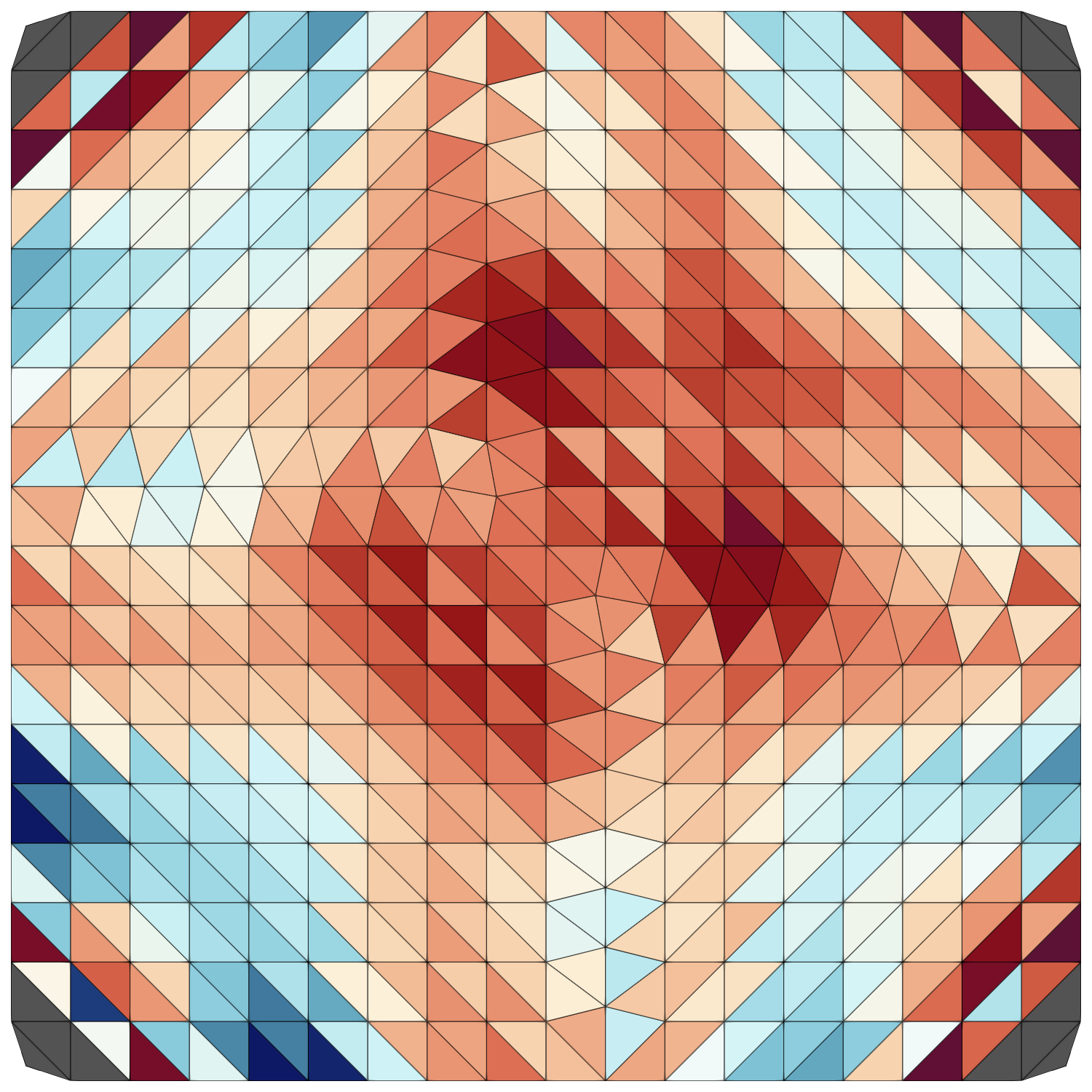}
	\includegraphics[width=0.17\textwidth]{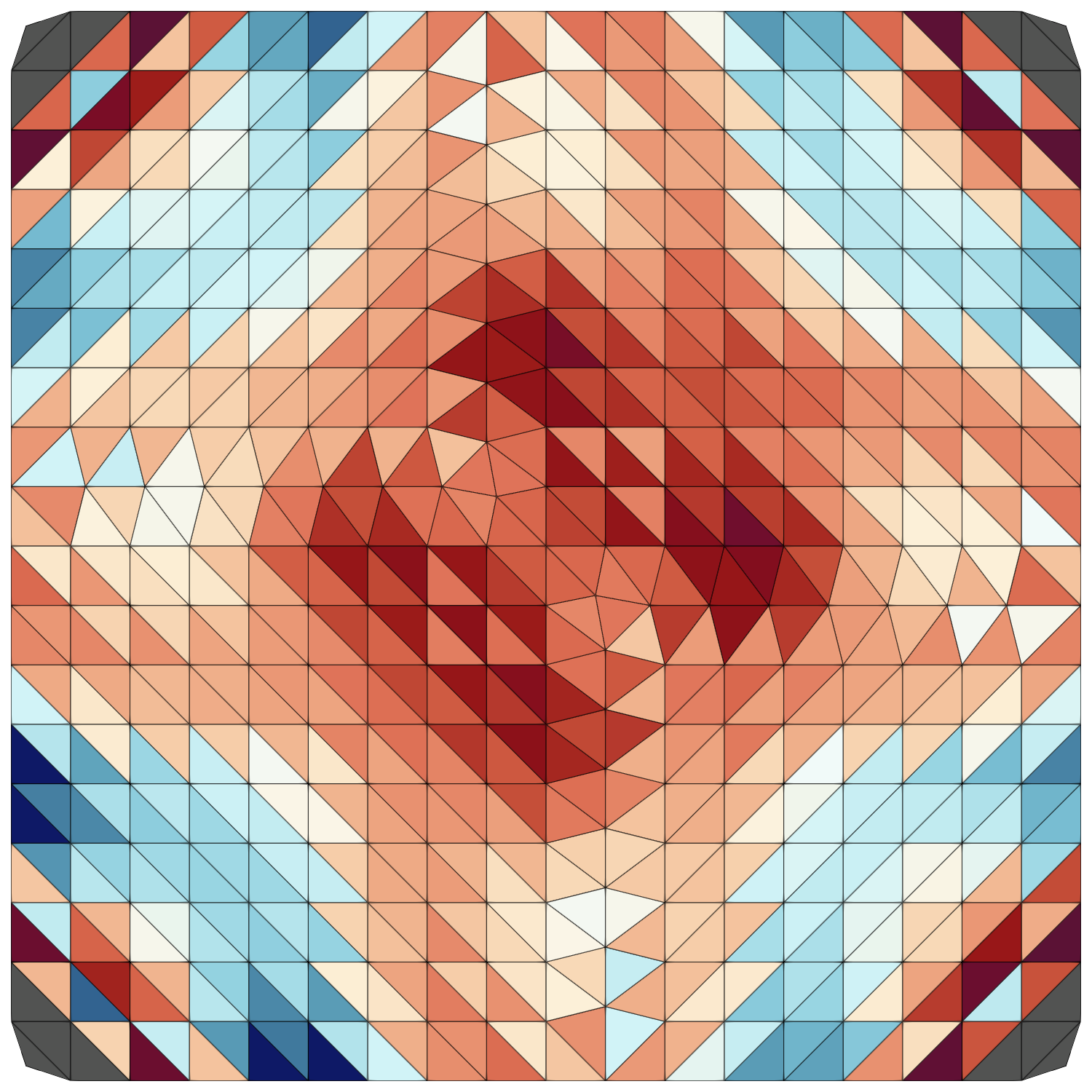}
	\caption{Comparison of material distribution when varying the standard deviation $\sigma$ of the material perturbation while keeping the other parameters fixed.
		The standard deviation was 
		$\sigma = \frac{5}{100}, \frac{1}{10}, \frac{2}{10}$ from left to right.
		Material thickness is shown using the color map $0\hspace{1mm}$\protect\resizebox{.08\linewidth}{!}{\protect\includegraphics{figures/colorbar_cw.png}}$\hspace{1mm}0.2$. 
	} \label{fig:deviation}
\end{figure}
With increasing strength of the stochastic perturbation, the optimal structure becomes more diffuse.
Indeed, in a deterministic setting, the leader could aim for a finely-structured design, but very imprecise manufacturing is likely to render it ineffective. 
In order to understand this effect, we describe an idealized situation: If the leader concentrates mass on a single row of $k$ elements, then a large negative fluctuation in the thickness of any single one of them is sufficient to destroy the strength of the construction. 
If instead, the leader distributes the mass on $k^2$ elements filling a square, then at least a number of order $k$ of those (with a specific geometry, for example, a column) must have a large negative fluctuation before the structure loses significantly in strength.

Lastly, in Figure~\ref{fig:complex}, we show two more complex examples of architectural designs of roof structures, inspired by \cite{VoHoWa12}. 
In the top row, we use a closed hall as the reference geometry for our bilevel optimization problem, which fills a box of approximately \(20  \times 20 \times 5\).
We limit the horizontal load with \(F_{\mathrm{max},xy} = 0.005\) and the vertical load with \(F_{\mathrm{max},z} = 2 F_{\mathrm{max},xy}\).
The elementwise bounds on the material thickness are \(\mat^- = 0.01\) and \(\mat^+ = 0.2\). 
The volume of the material is bounded by \(V^+ = 50\) and the stochastic variation is \(\sigma = 0.05\).  
The weights of the barrier terms are \(\alpha^F = 10^{-4}\), \(\alpha^u = 1\), and \(\alpha^V = 10^{-3}\).
In the bottom row, we use a reference geometry resembling a double torus cut in half, which fills a box of approximately \(70  \times 50 \times 15\).
Again, we limit the horizontal load with \(F_{\mathrm{max},xy} = 0.005\) and the vertical load with \(F_{\mathrm{max},z} = 2 F_{\mathrm{max},xy}\).
The elementwise bounds on the material thickness are again
\(\mat^- = 0.01\) and \(\mat^+ = 0.2\). 
The volume of the material is bounded by \(V^+ = 330\) and the stochastic variation is  \(\sigma = 0.05\).  
The weights of the barrier terms are \(\alpha^F = 10^{-4}\), \(\alpha^u = 1\), and \(\alpha^V = 10^{-1}\).
In both cases, we use the full domain as tracking set.
The main weakness of both structures is the concavity in the central part, which can be easily deformed by the vertical force. 
Hence, in both optimized solutions, the material is redistributed to prevent this. 
In the first case, this is done by building a stabilized ledge around the center, while in the second case beam-like structures from the two ``holes'' and another beam from the curve in the bottom emerge.
Furthermore, in the second one, also the ``entrance'' is stabilized by adding material at the ends of its arch.

\begin{figure}[t]
	\centering
	\includegraphics[width=0.45\textwidth]{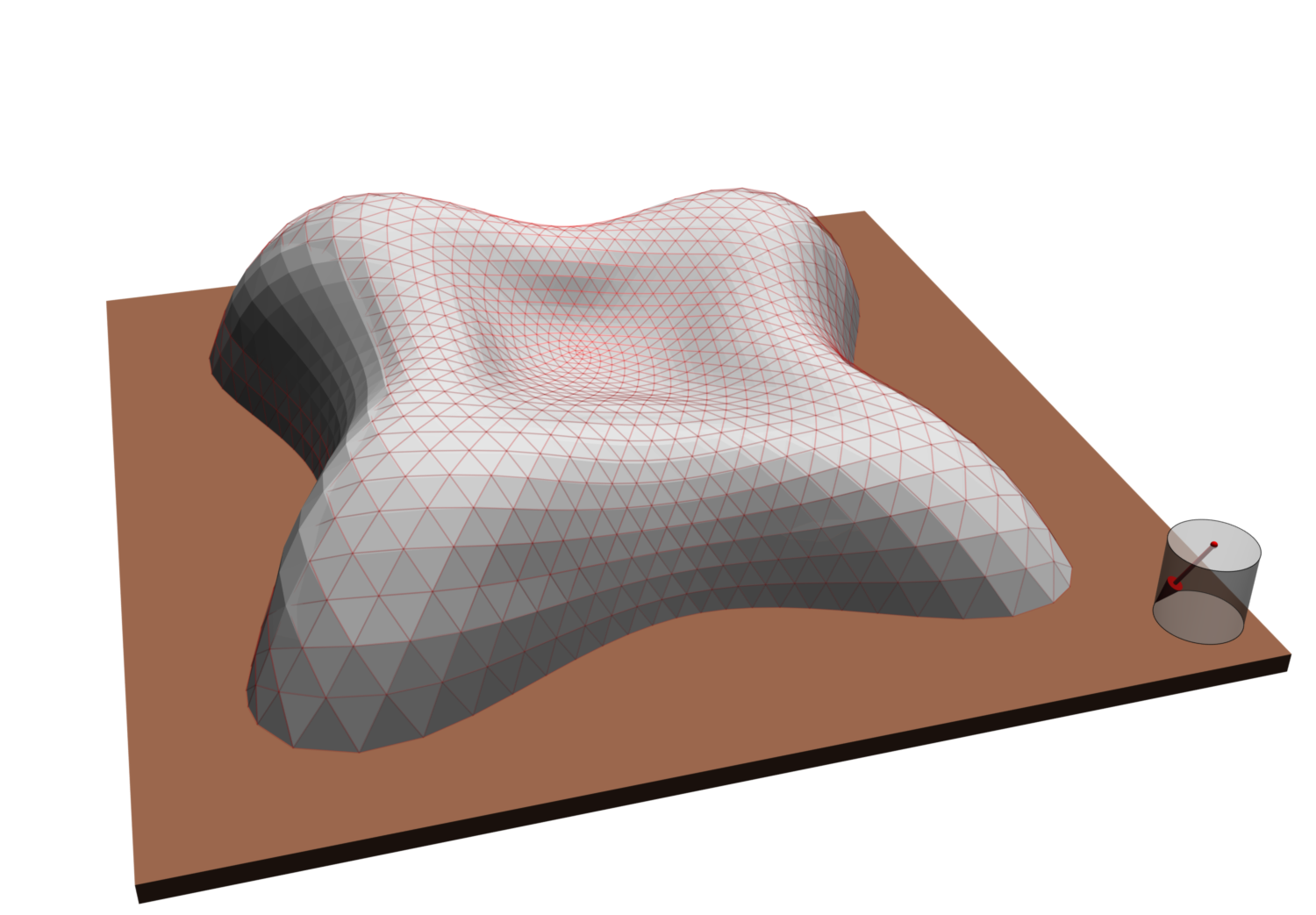}
	\includegraphics[width=0.24\textwidth]{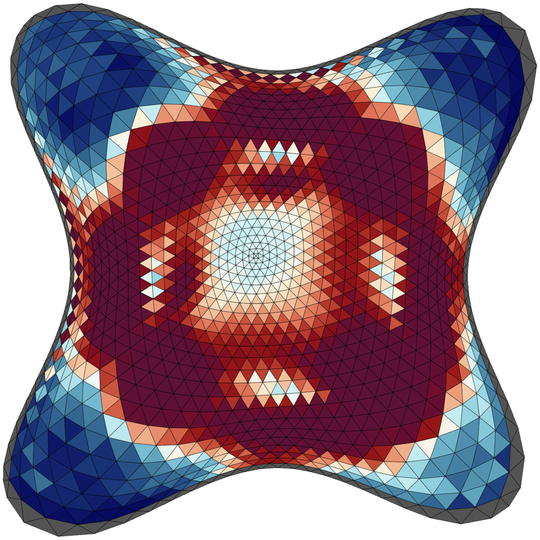}
	\includegraphics[width=0.24\textwidth]{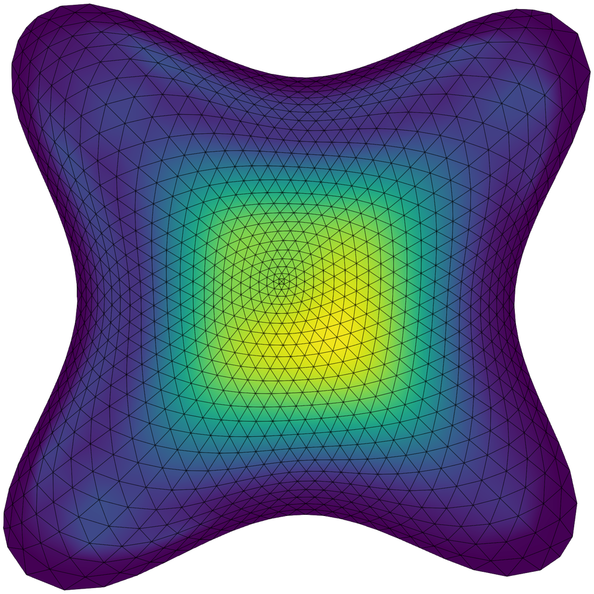}
	\includegraphics[width=0.05\textwidth]{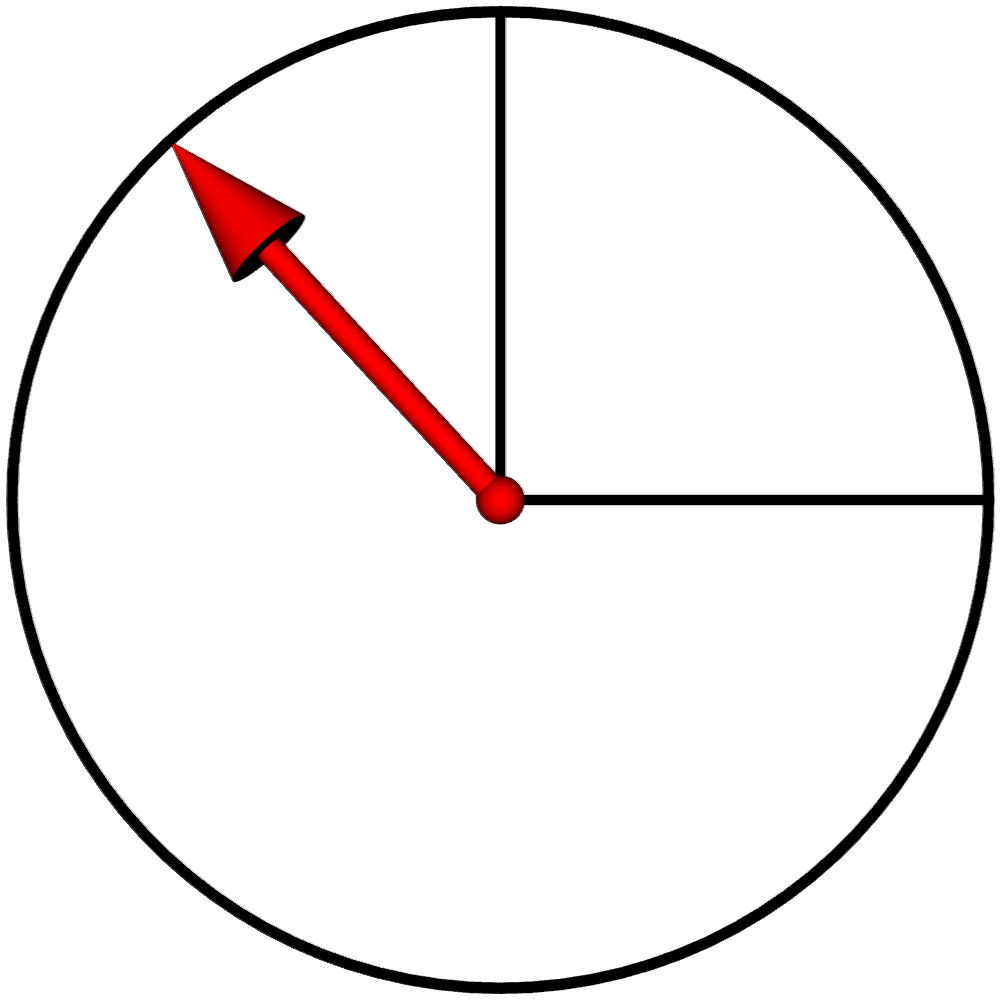}
	\includegraphics[width=0.45\textwidth]{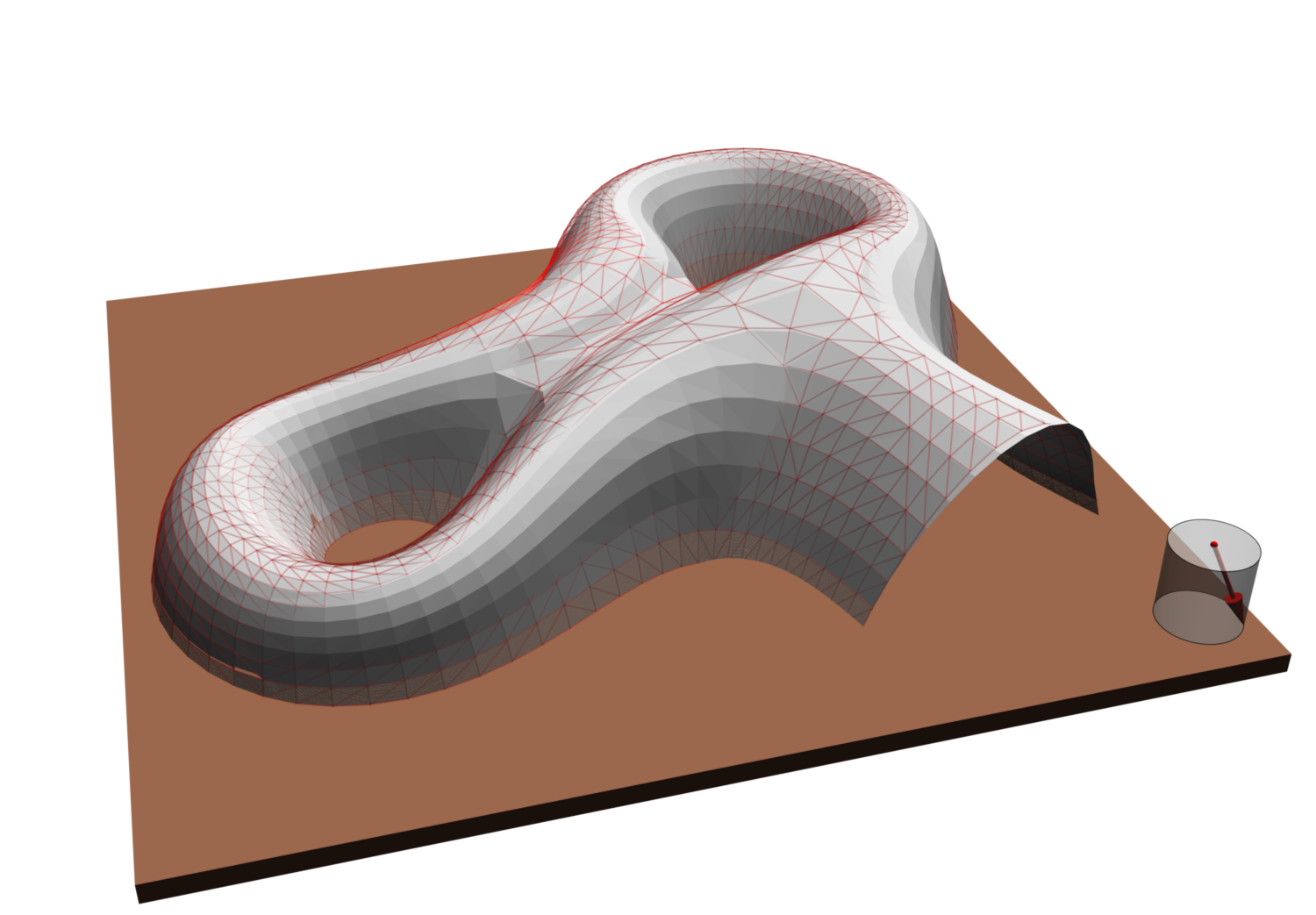}
	\includegraphics[width=0.24\textwidth]{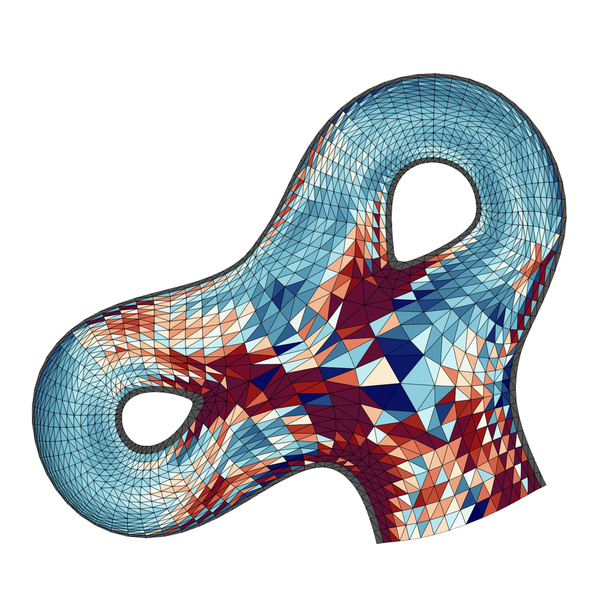}
	\includegraphics[width=0.24\textwidth]{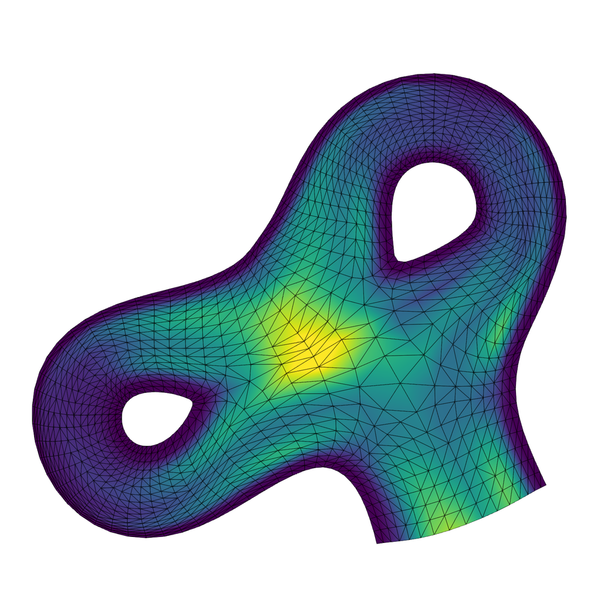}
	\includegraphics[width=0.05\textwidth]{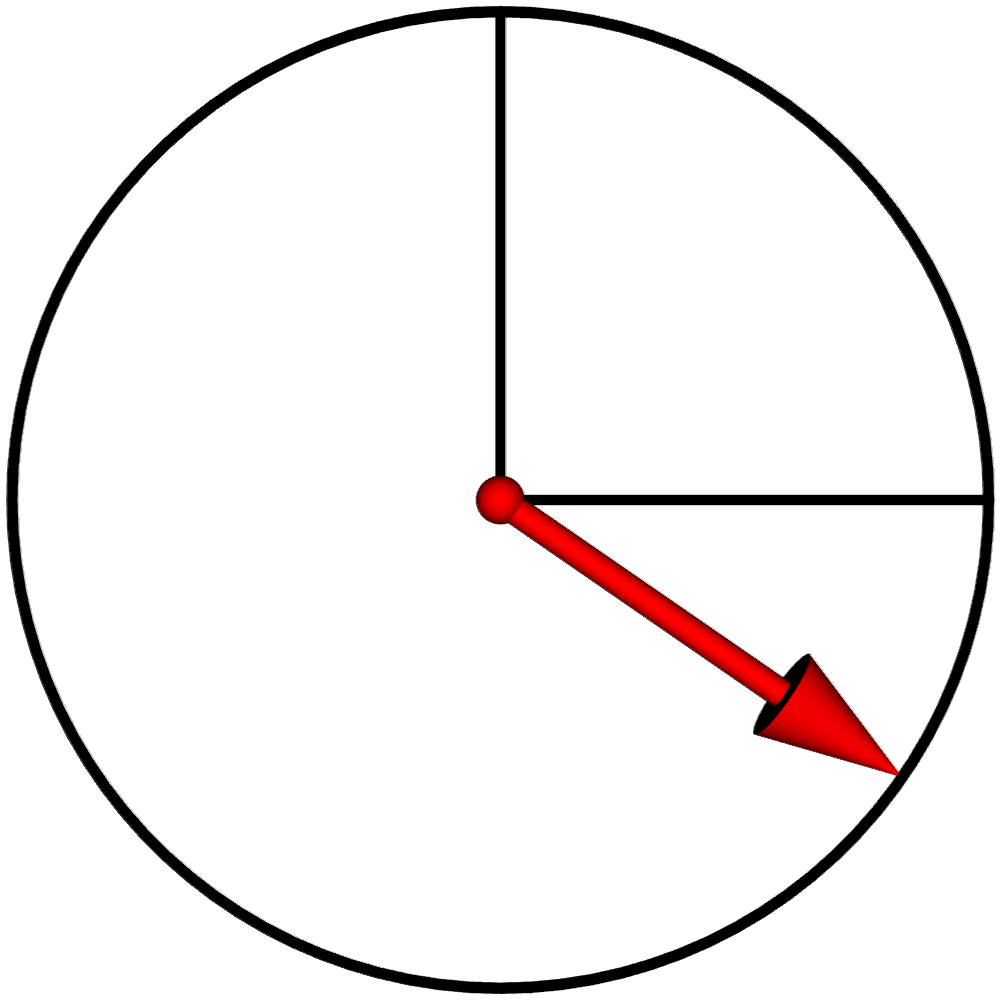}
	\caption{Results for two geometrically more complex examples. In both cases, we used tracking on the entire domain.
        On the left, we show the deformed configuration as a gray surface with the undeformed surfaces as a translucent overlay. 
        Furthermore, we visualize the direction of the force  leading to the maximal deformation in the cylinder. 
        In the middle, we see the resulting material distributions using the color map $0\hspace{1mm}$\protect\resizebox{.08\linewidth}{!}{\protect\includegraphics{figures/colorbar_cw.png}}$\hspace{1mm}0.2$. Boundary triangles for which all vertices are Dirichlet nodes 
        are shown in gray.
        On the right,  the magnitude of the deformation \(\y\) is displayed using the color map  $0\hspace{1mm}$\protect\resizebox{.08\linewidth}{!}{\protect\includegraphics{figures/colorbar_viridis.png}}$\hspace{1mm}0.7$.
        Additionally, on the far right, we show the 2D direction of the horizontal forces.
	}
	\label{fig:complex}
\end{figure}

\section{Discussion}

The findings in this article draw a line from curved roof-type constructions via modeling and shape optimization of discrete thin shells to pessimistic formulations of bilevel stochastic programs. The challenge is that even in the deterministic case, it is well-known that standard compactness assumptions fail to ensure the existence of optimal solutions. \medskip

Assuming that the support of the underlying probability measure is compact, we have considered stochastic parameters and assessed the random upper-level outcome based on some (law-invariant) convex risk measure. 
For the pessimistic model, we have shown continuity of the resulting risk functional if the random perturbation admits a Lebesgue density, the set of potential forces is a polyhedron and the lower level goal function is real analytic. 
Alternatively, we have investigated a regularized model where the leader also hedges against lower level solutions that are close to optimality. 
The risk functionals emerging from this regularized problem are automatically lower semicontinuous. 
In both situations, the existence of optimal solutions can be guaranteed under a compactness condition.
We have developed a proof-of-concept numerical implementation that applies a pessimistic bilevel strategy to a mechanical optimal design problem, using a stochastic gradient descent approach to compute locally optimal solutions of the pessimistic model.

\medskip

In closing, we would like to point out several possible directions for future research.
In the numerical optimization, it would be interesting to consider interior point methods to solve the ``original'' leader's and follower's problem which incorporate hard constraints instead of the regularization used here. 
From the point of view of elasticity, it would be interesting to study the nonlinear model \eqref{eq:free_energy} instead of its linearized equivalent and investigate the associated nonuniqueness issue in the lower level problem. 
In fact, this would lead to a proper trilevel problem and bring new challenges for theoretical and numerical investigations. 
Furthermore, it would be worthwhile to investigate the infinite-dimensional variational problem of thin shell or volume elasticity with appropriate function spaces and from the perspective of optimization with continuous PDE constraints.
In the present paper, ``risky'' decisions can be penalized in the objective function, but  the leader ensures that the perturbed material parameters are feasible regardless of the realization, via a restriction of the design variable to the set $\mathcal{U}_\Upsilon$. 
Models, where this robust constraint is replaced with a system of chance or stochastic dominance constraints, can be expected to produce less conservative solutions, which improve the values of the leader's cost functional at the cost of some residual risk.

\section*{Acknowledgements}
We thank Kai Echelmeyer for providing an early version of a single level shape optimization algorithm
for discrete thin shells.
Furthermore, we thank Etienne Vouga for providing the meshes used throughout this work via \url{https://github.com/evouga/SelfSupporting}.

This work was partially supported by the Deutsche Forschungsgemeinschaft (DFG, German Research Foundation) {\sl via} project 211504053 - Collaborative Research Center 1060, 
project 390685813 - Hausdorff Center for Mathematics 
and the Collaborative Research Center TRR 154.

\bibliographystyle{spmpsci} 
\bibliography{Submission}

\end{document}